\documentclass[twoside,11pt,reqno]{amsart}
\usepackage[a4paper,margin=1.2in]{geometry}
\usepackage[dvipsnames]{xcolor}
\usepackage{amsthm,amsmath,amsfonts,graphicx,geometry,lipsum,amsxtra}
\usepackage{hyperref}
\usepackage{mathrsfs}
\usepackage{verbatim}
\usepackage{appendix}
\usepackage{tikz}
\usepackage{epigraph}
\usepackage{tcolorbox}

\numberwithin{equation}{section}
\usepackage{esint}
\usepackage{mathtools}

\usepackage[dvipsnames]{xcolor}
\usepackage{etoolbox}
\hypersetup{
    colorlinks=true,
    linkcolor=blue,
    citecolor=blue,
    urlcolor=red,
    pdftitle={[BMSS25]Resonance near a degenerate embedded eigenvalue},
    }
\usepackage{comment}
\usepackage{amsmath}
\usepackage{tikz}
\usepackage{epigraph}
\usepackage{lipsum}
\allowdisplaybreaks

\definecolor{titlepagecolor}{cmyk}{1,.60,0,.40}
\patchcmd{\subsection}{\normalfont}{\normalfont\color{blue}}{}{}
\DeclareFixedFont{\titlefont}{T1}{ppl}{b}{it}{0.5in}

\def\th@plain{%
  \thm@notefont{}
  \itshape 
}
\def\th@definition{%
  \thm@notefont{}
  \normalfont 
}
\makeatother

\usepackage{srcltx} 
\usepackage{amscd}
\usepackage{amssymb}
\usepackage{amsmath}
\usepackage{pb-diagram}
\usepackage{graphicx}
\usepackage{hyperref}
\usepackage{array}
\usepackage[all]{xy}
\xyoption{matrix}
\xyoption{arrow}
\usepackage{pdfsync}
\usepackage{physics}
\usepackage{enumitem}
 
\usepackage{thmtools}
\usepackage{amssymb}

\DeclareUnicodeCharacter{2212}{-,+}
 \numberwithin{equation}{section}
\newtheorem{theorem}{Theorem}[section]
\newtheorem{proposition}[theorem]{Proposition}
\newtheorem{corollary}[theorem]{Corollary}
\newtheorem{lemma}[theorem]{Lemma}
\newtheorem{remark}[theorem]{Remark}
\newtheorem{definition}[theorem]{Definition}

\newcommand{\bdfn}{\begin{definition}}
\newcommand{\bthm}{\begin{theorem}}
\newcommand{\blem }{\begin{lemma}}
\newcommand{\bcla }{\begin{cla}aim}
\newcommand{\ecla }{\end{cla}im}
\newcommand{\bpro}{\begin{proposition}}
\newcommand{\bcor }{\begin{corollary}}

\newcommand{\brmrk}{\begin{remark}}
\newcommand{\bassu}{\begin{assumption}}
\newcommand{\eassu}{\end{assumption}}
\newcommand{\edfn}{\end{definition}}
\newcommand{\ethm}{\end{theorem}}
\newcommand{\elem }{\end{lemma}}
\newcommand{\epro}{\end{proposition}}
\newcommand{\ecor}{\end{corollary}}
\newcommand{\ermrk}{\end{remark}}

\newcommand*\diff{\mathop{}\!\mathrm{d}}
\newcommand{\e}{\mathrm{e}}
\newcommand{\CC}{\mathbb C}
\newcommand{\RR}{\mathbb R}
\renewcommand{\H}{\mathcal H}

\renewcommand{\iota}{{i\mkern1mu}}
\renewcommand{\Re}{\operatorname{Re}}
\renewcommand{\Im}{\operatorname{Im}}

\newcommand{\slim}{\mathop{\mathrm{s\text{-}lim}}}
\calclayout
\begin{document}
\title[Resonance near a doubly degenerate embedded eigenvalue]{Resonance near a doubly degenerate embedded eigenvalue}
\subjclass[2020]{47A10; 47A55; 81Q15. }
\keywords{resonance; embedded eigenvalue; time delay; sojourn time. }

\author[Bansal]{Hemant Bansal}
\address{Hemant Bansal\\ Department of Mathematical Sciences, Indian Institute of Science Education and Research Mohali\\Sector 81, SAS Nagar, Punjab 140306, India}
\email{ph20030@iisermohali.ac.in}

\author[Maharana]{Alok Maharana}
\address{Alok Maharana\\ Department of Mathematical Sciences, Indian Institute of Science Education and Research Mohali\\Sector 81, SAS Nagar, Punjab 140306, India}
\email{maharana@iisermohali.ac.in}

\author[Sahu]{Lingaraj Sahu}
\address{Lingaraj Sahu\\ Department of Mathematical Sciences, Indian Institute of Science Education and Research Mohali\\Sector 81, SAS Nagar, Punjab 140306, India}
\email{lingaraj@iisermohali.ac.in}

\vspace*{-1cm}

 \maketitle
\begin{abstract}

This paper extends the study of resonance phenomenon initiated by the authors in~\cite{LS}
to the case of doubly degenerate embedded eigenvalues (i.e. eigenvalue of multiplicity two). A fundamentally new concept is  introduced to resolve the difficulties that arise in this study, beyond the methods of \cite{LS}. We apply a differential topological technique, namely the Morse Lemma, to study the present case. This allows us to understand rank-two self-adjoint perturbations of the Laplacian on $L^{2}(\mathbb{R}^{3})$, and along with methods of \cite{LS}, we obtain asymptotic results for the spectral density near a doubly degenerate embedded eigenvalue.
Importantly, we are able to easily handle the threshold eigenvalue case. 

\par We also analyze important properties which explain such resonance phenomenon, viz., asymptotic behaviour of the sojourn time, scattering cross-section and time delay.
\end{abstract}

\section{Introduction}
Under perturbations, embedded eigenvalues of self-adjoint operators may
disappear into the continuous spectrum and give rise to resonances, which can be
described through the local behaviour of quantities such as the spectral
density, scattering cross-section, time delay and sojourn time. The resonance phenomenon associated with such eigenvalues has been studied by various authors; see, for example,
\cite{howland,simon,Orth,Jensen,Astaburuaga2024}.

In a recent work~\cite{LS}, a detailed analysis has been carried out for rank-one
perturbations of the Laplacian on $L^2(\RR^3)$, where the disappearance of a simple embedded eigenvalue is shown
to produce a Breit--Wigner-type asymptotic behaviour for the spectral density, scattering cross-section and
time delay near the resonance. In this paper, we consider one-parameter perturbations
of the Laplacian on \(L^{2}(\mathbb{R}^{3})\) by suitable rank-two operators and study the resonance phenomenon due to a doubly degenerate embedded eigenvalue (i.e. eigenvalue of multiplicity two). Methods of \cite{LS} (use of the implicit function theorem) are not enough to resolve the degeneracy that arise in the study. We use a tool from differential topology, namely the Morse Lemma, to obtain two distinct resonance paths in this case. We associate a normalized eigenvector to each of the resonance paths in a canonical way so that they together form an orthonormal eigenbasis for the embedded eigenvalue and show that the spectral density associated with each of them admits a
Breit--Wigner asymptotic profile near the corresponding resonance path. As a consequence, we prove the spectral concentration by analyzing the spectral projection along the two paths separately. Finally, we carry out asymptotic analysis of the scattering amplitude, time delay and sojourn time. 

\par The higher degeneracy case presents some new difficulties which is under investigation and will be reported in a future work. 

\par
The paper is organized as follows. Section~\ref{2} collects preliminary results
and fixes the notation used throughout the paper. In Section~\ref{3}, we recall
the spectral representation of the Laplacian and establish some related results. Section~\ref{4} develops the
spectral theory of rank-two perturbations of the Laplacian in
$L^{2}(\mathbb{R}^{3})$, including necessary and sufficient conditions for the
existence of embedded eigenvalues of multiplicity two. In Section~\ref{5}, we specify the class of rank-two perturbations and 
fix the model that will be studied in the remainder of the paper.
Section~\ref{6} is devoted to the asymptotic analysis of the spectral density. In Section~\ref{7}, we prove the spectral concentration, time decay  and derive the estimates for the sojourn time. Finally, Section~\ref{8}
examines the asymptotic behaviour of the scattering amplitude as the perturbed
scattering system approaches the unperturbed one and derives corresponding
asymptotic results for time delay.

\section{Notations and some basic results}\label{2}
For a self-adjoint operator $T$ (possibly unbounded) on a separable Hilbert space $\H$, let $\sigma(T)$ and $\sigma_{ac}(T)$ denote the spectrum and the absolutely continuous spectrum of $T$ respectively. Let $\mathcal{H}_{\text{ac}}(T)$ denote the absolutely continuous subspace with respect to $T$. Let $\mathcal B(\H)$ denotes the space of bounded operators on $\H$. A sequence $\{T_n\}\subset\mathcal B(\H)$ is said to converge strongly to an operator $T\in\mathcal B(\H)$, written as $\slim\limits_{n\to\infty}~T_n=T$, if for each $v\in\H$, $T_nv\to Tv$ as $n\to\infty.$\par Let $C^\infty(\mathbb{R}^n)$ and $\mathcal{S}(\mathbb{R}^n)$ denote the spaces of all smooth functions and Schwartz class functions on $\mathbb{R}^n$ respectively. For $1 \leq p\leq \infty$, let $L^p(\mathbb{R}^n)$ denote the standard Lebesgue space. The Fourier transform of a function $f:\RR^n\to\CC$ is defined as:
\[\widehat{f}(\xi) = \frac{1}{(2\pi)^{\frac{n}{2}}} \int_{\mathbb{R}} f(x) e^{-\iota \xi\cdot x}\diff x \quad \text{for}\ \xi \in\RR^n\]
whenever the integral converges in an appropriate sense.\\
We now present some preliminary results that will be required for computing the resolvent limits of the Laplacian.
\bdfn\label{CPV-dfn} Let $f \in \mathcal{S}(\mathbb{R})$. The Cauchy principal value of $f$, denoted by $\gamma(f,\cdot)$, is defined as
\begin{equation}\label{pvdfn}\gamma(f,\lambda): = \lim_{\epsilon \to 0^+} \int_{\mathbb{R} \setminus (\lambda - \epsilon, \lambda + \epsilon)} \frac{f(x)}{x - \lambda}\diff x\quad \text{ for }\lambda\in\RR\end{equation} where the above limit exists~$($see \cite[Corollary~$4.95$~$($b$)$]{DorinaMitreaBook}$)$. \edfn
\begin{proposition}[{\cite[Proposition~2.2]{LS}}]\label{CPV-properties}
Let $f \in \mathcal{S}(\mathbb{R})$. Then $\gamma(f,\cdot)\in C^\infty(\mathbb{R})$ and its $k$-th derivative satisfies
    \begin{equation}\label{derivativeprincipalvalue}
        \frac{\partial^k\gamma(f,\cdot)}{\partial\lambda^k}
        = \gamma\!\left(\frac{\diff^k f}{\diff \lambda^k},\cdot\right).
    \end{equation}
\end{proposition}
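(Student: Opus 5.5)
The plan is to remove the singularity by a change of variables so that $\gamma(f,\cdot)$ becomes an ordinary parameter-dependent integral with a smooth, integrable integrand, and then differentiate under the integral sign. For $\epsilon>0$, substitute $x=\lambda+t$ on the symmetric region $|t|\ge\epsilon$ and pair $t$ with $-t$:
\[
\int_{|x-\lambda|\ge\epsilon}\frac{f(x)}{x-\lambda}\diff x=\int_{\epsilon}^{\infty}\frac{f(\lambda+t)-f(\lambda-t)}{t}\diff t .
\]
By the mean value theorem, $|f(\lambda+t)-f(\lambda-t)|\le 2t\|f'\|_\infty$, so the integrand is bounded near $t=0$. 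For large $t$ it is bounded by $(|f(\lambda+t)|+|f(\lambda-t)|)/t$, which is integrable. Letting $\epsilon\to0^+$ therefore gives
\[
\gamma(f,\lambda)=\int_0^\infty\frac{f(\lambda+t)-f(\lambda-t)}{t}\diff t
=\int_0^\infty\int_{-1}^{1}f'(\lambda+st)\diff s\,\diff t .
\]
This confirms that the limit defining $\gamma(f,\lambda)$ exists and gives a convenient formula for it.

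Next, I show that $\gamma(f,\cdot)$ is differentiable with $\partial_\lambda\gamma(f,\lambda)=\gamma(f',\lambda)$. Write $g_\lambda(t)=\bigl(f(\lambda+t)-f(\lambda-t)\bigr)/t$. Its $\lambda$-derivative is $\bigl(f'(\lambda+t)-f'(\lambda-t)\bigr)/t$, which is exactly the integrand for $\gamma(f',\lambda)$, and $f'\in\mathcal S(\RR)$. Differentiation under the integral sign is then justified by dominated convergence, once I have a dominating function uniform for $\lambda$ in a compact set $|\lambda|\le M$.

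The construction of that dominating function is the only real obstacle. It must be integrable both at $t\to0$ and at $t\to\infty$, uniformly in $\lambda$, and I split the range accordingly:
\begin{itemize}
\item For $0<t\le1$, the mean value theorem gives the bound $2\|f''\|_\infty$.
\item For $t\ge1$, the bound is $|f'(\lambda+t)|+|f'(\lambda-t)|$. Since $|f'(y)|\le C_N(1+|y|)^{-2}$, and $|\lambda\pm t|\ge t-M$ when $|\lambda|\le M$, this is dominated for $t\ge 2M+1$ by $2C_N(1+t/2)^{-2}$, which is integrable.
\item On the remaining compact range $1\le t\le 2M+1$, the integrand is bounded by $2\|f'\|_\infty$.
\end{itemize}
Together these give an integrable majorant independent of $\lambda\in[-M,M]$. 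The same majorant, with $f$ in place of $f'$, also yields continuity of $\gamma(f,\cdot)$.

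Finally, I conclude by induction. Since $\frac{\diff^k f}{\diff\lambda^k}\in\mathcal S(\RR)$ for every $k$, applying the first-derivative identity repeatedly gives \eqref{derivativeprincipalvalue}. Each $\gamma\bigl(f^{(k)},\cdot\bigr)$ is continuous by the same domination argument, so $\gamma(f,\cdot)\in C^\infty(\RR)$.
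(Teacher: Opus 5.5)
Your proof is correct and complete. The paper gives no argument of its own for this statement: it imports it from \cite[Proposition~2.2]{LS}, and refers the existence of the principal value to \cite{DorinaMitreaBook}, so there is nothing internal to match against.

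The symmetrization $\gamma(f,\lambda)=\int_0^\infty \frac{f(\lambda+t)-f(\lambda-t)}{t}\,\diff t$ turns the principal value into an absolutely convergent integral. Your three-piece majorant for $\bigl(f'(\lambda+t)-f'(\lambda-t)\bigr)/t$, uniform for $|\lambda|\le M$, correctly justifies differentiating under the integral sign. The induction through $f^{(k)}\in\mathcal S(\RR)$ then gives both the derivative identity and $C^\infty$. The separate continuity remark is harmless but redundant, since each $\gamma(f^{(k)},\cdot)$ is itself differentiable.

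The natural alternative, in the spirit of the cited reference, is distributional. One has $\gamma(f,\cdot)=-\bigl(\mathrm{p.v.}\tfrac{1}{x}\bigr)*f$, a tempered distribution convolved with a Schwartz function, hence smooth with $\partial^k(T*f)=T*\partial^k f$. That route is shorter if distribution theory is taken for granted. Yours is elementary and proves existence of the limit at the same time. Its explicit majorants, uniform for $\lambda$ in compact sets, also give the local uniformity in $\lambda$ that the paper relies on later, for instance in the uniform boundary values of the resolvent.
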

\begin{proposition}[Plemelj--Privalov Theorem, {\cite[eq.~(4.7.48)]{DorinaMitreaBook}}]\label{Plemelj-Privalov Theorem}
Let $f \in \mathcal{S}(\mathbb{R})$. Then
\[
    \lim_{\epsilon \to 0^+} 
    \int_{\mathbb{R}} \frac{f(x)}{x - (\lambda \pm \iota \epsilon)}\,\diff x
    = \gamma(f,\lambda) \pm \iota \pi f(\lambda),
\]
uniformly for $\lambda$ in compact sets.
\end{proposition}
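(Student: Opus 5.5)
The idea is to write the Cauchy kernel as a real part plus an imaginary part, and to treat each with the standard tools for Schwartz functions. For $\epsilon>0$,
\[
\frac{1}{x-(\lambda\pm\iota\epsilon)}=\frac{x-\lambda}{(x-\lambda)^2+\epsilon^2}\pm\iota\,\frac{\epsilon}{(x-\lambda)^2+\epsilon^2}.
\]
So it suffices to prove two things, uniformly for $\lambda$ in a compact set $K\subset\RR$:
\[
\int_{\RR}\frac{(x-\lambda)f(x)}{(x-\lambda)^2+\epsilon^2}\diff x\to\gamma(f,\lambda),
\qquad
\int_{\RR}\frac{\epsilon f(x)}{(x-\lambda)^2+\epsilon^2}\diff x\to\pi f(\lambda).
\]

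\textbf{Imaginary part.} The second integrand is the Poisson kernel $P_\epsilon(x-\lambda)$, which has total mass $\pi$. Hence
\[
\int P_\epsilon(x-\lambda)f(x)\diff x-\pi f(\lambda)=\int P_\epsilon(y)\bigl(f(\lambda+y)-f(\lambda)\bigr)\diff y .
\]
Split this at $|y|\le\delta$. On $|y|\le\delta$, use $|f(\lambda+y)-f(\lambda)|\le\|f'\|_\infty|y|$. On $|y|>\delta$, use the bound $2\|f\|_\infty$ together with $\int_{|y|>\delta}P_\epsilon\to0$. The first piece contributes at most $\|f'\|_\infty\int_{|y|\le\delta}P_\epsilon(y)|y|\diff y$. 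This is $O(\epsilon\log(\delta/\epsilon))$, which tends to $0$. Both estimates are independent of $\lambda$, so the convergence is uniform on all of $\RR$.

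\textbf{Real part.} Substitute $y=x-\lambda$ and use the oddness of $y/(y^2+\epsilon^2)$ to symmetrize:
\[
\int\frac{y\,f(\lambda+y)}{y^2+\epsilon^2}\diff y=\int_0^\infty\frac{y\bigl(f(\lambda+y)-f(\lambda-y)\bigr)}{y^2+\epsilon^2}\diff y .
\]
The same symmetrization applied to \eqref{pvdfn} gives
\[
\gamma(f,\lambda)=\int_0^\infty\frac{f(\lambda+y)-f(\lambda-y)}{y}\diff y .
\]
This integral converges absolutely, because the numerator is at most $2\|f'\|_\infty y$ near $0$ and $f$ decays at infinity. The difference of the two integrals is
\[
-\int_0^\infty\frac{\epsilon^2}{y(y^2+\epsilon^2)}\bigl(f(\lambda+y)-f(\lambda-y)\bigr)\diff y .
\]
Set $g_\lambda(y)=\bigl(f(\lambda+y)-f(\lambda-y)\bigr)/y$, and bound this by $\int_0^\infty\frac{\epsilon^2}{y^2+\epsilon^2}|g_\lambda(y)|\diff y$.

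\textbf{Main obstacle.} The one place needing care is uniformity in this last integral. I would handle it with a uniform dominating function. We have $|g_\lambda(y)|\le 2\|f'\|_\infty$ for $y\le1$. For $y>1$ and $\lambda\in K$, the decay of $f$ gives $|g_\lambda(y)|\le C_K/y^{2}$. Put $h_K(y)=2\|f'\|_\infty$ for $y\le1$ and $h_K(y)=C_K y^{-2}$ for $y>1$, so $h_K\in L^1(0,\infty)$. Since $\epsilon^2/(y^2+\epsilon^2)\le1$ and tends to $0$ pointwise for $y>0$, the bound $\int\frac{\epsilon^2}{y^2+\epsilon^2}h_K\diff y$ does not depend on $\lambda$ and tends to $0$ by dominated convergence. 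This gives uniform convergence of the real part on $K$.

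Combining the real and imaginary parts proves the formula, uniformly for $\lambda\in K$.
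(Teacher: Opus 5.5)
Your proof is correct. The paper gives no argument of its own here: the statement is simply quoted from \cite[eq.~(4.7.48)]{DorinaMitreaBook}.

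Your route is a direct, self-contained computation. You split the Cauchy kernel into the Poisson and conjugate Poisson kernels and treat the first as an approximate identity. You then symmetrize both the second and \eqref{pvdfn} about $\lambda$, so that everything reduces to the single error term $\int_0^\infty\frac{\epsilon^2}{y^2+\epsilon^2}\,g_\lambda(y)\,\diff y$.

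What this gains over the citation is that it makes the uniformity in $\lambda$ explicit, and the paper actually uses that uniformity in \eqref{eqn-laplacianresolventlimit} and \eqref{uiresolventlimit}. Suppose one starts instead from the distributional Sokhotski--Plemelj identity $\frac{1}{x-(\lambda\pm\iota0)}=\mathrm{p.v.}\frac{1}{x-\lambda}\pm\iota\pi\delta_\lambda$ in $\mathcal{S}'(\mathbb{R})$. Pairing with the translates $f(\cdot+\lambda)$ then gives the limit only for each fixed $\lambda$. Uniformity on compact sets needs a further equicontinuity (Banach--Steinhaus) argument on the compact family of translates. Your estimates give it directly.

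In fact you can dispense with dominated convergence and with the compact set $K$ altogether. On $y\le1$ use $|g_\lambda|\le2\|f'\|_\infty$ and $\int_0^1\frac{\epsilon^2}{y^2+\epsilon^2}\,\diff y\le\frac{\pi\epsilon}{2}$. On $y>1$ use $\frac{\epsilon^2}{y^2+\epsilon^2}\le\frac{\epsilon^2}{y^2}$ and $|g_\lambda(y)|\le 2\|f\|_\infty/y$. This bounds the real-part error by $\epsilon\bigl(\pi\|f'\|_\infty+\epsilon\|f\|_\infty\bigr)$.

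Together with your imaginary-part estimate, the convergence is then uniform in $\lambda\in\mathbb{R}$. The argument needs only that $f$ and $f'$ be bounded and that $f$ decay enough for $\gamma(f,\lambda)$ to be defined, which goes well beyond $\mathcal{S}(\mathbb{R})$.
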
 \section{Spectral representation of the Laplacian on $L^2(\RR^3$)}\label{3}In this section we present several preliminary results concerning the Laplacian 
\(\Delta = \sum_{j=1}^{3} \frac{\partial^{2}}{\partial x_{j}^{2}}\) in \(\mathbb{R}^{3}\), 
its spectral representation and the boundary behaviour of its resolvent.\par
Let $H_0$ denote the self-adjoint extension of $-\Delta$ on the Hilbert space $L^2(\RR^3)$, with domain\[\mathcal D(H_0) =\left\{v\in L^2(\RR^3):\int_{\RR^3}\left||\xi|^2\hat v(\xi)\right|^2\diff \xi<\infty\right\}.\] The operator $H_0$ is purely absolutely continuous with $\sigma(H_0)=[0,\infty)$. 
The spectral representation of $H_0$ is realized in the unitarily isomorphic Hilbert space $L^2([0,\infty), L^2(S^2))$ via the unitary map $U$ defined by\begin{equation}\label{iso}
    (Uv)_\lambda(\omega)
    = 2^{-1/2}\lambda^{1/4}\widehat{v}(\sqrt{\lambda}\,\omega)
    \quad \text{for almost every } \lambda \in [0,\infty),\, \omega \in S^2,
\end{equation}
where $S^2$ is the unit sphere in $\mathbb{R}^3$ equipped with the surface area measure and $[0,\infty)$ is endowed with the Lebesgue measure. Note that for all $v\in \mathcal D(H_0) $, we have \[(UH_0v)_\lambda=\lambda(Uv)_\lambda \quad \text{for almost every } \lambda \in [0,\infty).\]For notational simplicity, we denote $(Uv)_\lambda$ by $v_\lambda$ and the $n$-th strong derivative 
$\frac{\diff^n}{\diff\lambda^n}v_\lambda$ at $\lambda = s$ by $v_s^{(n)}$. We say that $v_\lambda$ \emph{vanishes to order $n$ at} $\lambda = s$ if 
$v_s^{(k)} = 0$ for $0 \le k < n$ and $v_s^{(n)} \neq 0$.
Define the space
\[
\mathcal{E}
:=
\left\{
v \in L^{2}(\mathbb{R}^{3})
\;\middle|\;
\begin{array}{l}
\text{the map } \lambda \mapsto v_\lambda \text{ is strongly smooth,} \\[0.2em]
\text{and decays rapidly as } \lambda \to 0
\text{ and } \lambda \to \infty
\end{array}
\right\}.
\]
This is a dense subspace of $L^{2}(\mathbb{R}^{3})$, since its image under
the spectral transform $U$ contains
$C_c^\infty\bigl((0,\infty); L^{2}(S^{2})\bigr)$, which is dense in
$L^{2}\bigl([0,\infty); L^{2}(S^{2})\bigr)$.\\Let $R_{0}(z)$ denote the resolvent $(H_0-z)^{-1}$ of $H_{0}$,
defined for $z \in \mathbb{C}\setminus[0,\infty)$. For $v,w\in\mathcal{E},$ define \[\eta_{v,w}(\lambda) = \begin{cases}\langle v_\lambda, w_{\lambda} \rangle &\text{ if }\lambda\geq0,\\0&\text{ if }\lambda<0.\end{cases}\]Note that, by Cauchy-Schwarz inequality, $\eta_{v,w}\in\mathcal S(\RR).$ Then for $\Im z\neq 0,$\[ \langle R_0(z)v, w \rangle=\int_0^\infty\langle( R_0(z)v)_\lambda,w_\lambda\rangle\diff\lambda=\int_\RR\frac{\eta_{v,w}(\lambda)}{\lambda-z}\diff\lambda.
\]  Hence by Proposition~\ref{Plemelj-Privalov Theorem}, we have
\begin{equation}\label{eqn-laplacianresolventlimit}
    \lim_{\epsilon \to 0^+} 
    \langle R_0(\lambda \pm \iota\epsilon)v, w \rangle
    = \gamma(\eta_{v,w}, \lambda)
      \pm \iota\pi\, \eta_{v,w}(\lambda)
\end{equation}uniformly for $\lambda$ in compact sets. \\For $v \in \mathcal{E}$ satisfying   
$v_a=0$ in $L^2(S^2)$ for some $a\geq0$, we define the function $\Phi_{v,a}\in L^2(\RR^3)$ by
\begin{equation}\label{modified}
(\Phi_{v,a})_\lambda =
\begin{cases}
\dfrac{v_\lambda}{\lambda-a}, & \text{if } \lambda \neq a, \\[6pt]
v'_a, & \text{if } \lambda = a.
\end{cases}
\end{equation}
Note that, when $a=0$, we have $v'_a = 0$. The following lemma will be useful in Section \ref{4}.
\begin{lemma}\label{derivative}
Let $v \in \mathcal{E}$ be such that
$v_{a}=0$ for some $a\geq0$. Then the following statements hold:
\begin{enumerate}[label=(\alph*)]
\item
$\Phi_{v,a} \in \mathcal{E}$.

\item
\[
\lim_{\epsilon \to 0^{+}} R_{0}(a+\iota\epsilon)v = \Phi_{v,a}
\quad \text{in } L^{2}(\mathbb{R}^{3}).
\]

\item
Moreover, if $w \in \mathcal{E}$ satisfies
$w_{a}=0$, then
\[
\left.\frac{\partial}{\partial \lambda}
\gamma(\eta_{v,w},\lambda)\right|_{\lambda=a}
=
\langle \Phi_{v,a},\, \Phi_{w,a} \rangle.
\]
\end{enumerate}
\end{lemma}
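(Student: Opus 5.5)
The whole argument works in the spectral representation. Since $v_a=0$, Taylor's theorem with integral remainder in the strong sense gives
\[
(\Phi_{v,a})_\lambda=\int_0^1 v'_{a+t(\lambda-a)}\,\diff t
\]
for all $\lambda\ge 0$. This formula also covers $\lambda=a$, where the integral equals $v'_a$.

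\emph{Step 1: part (a).} Differentiating under the integral gives
\[
(\Phi_{v,a})^{(n)}_\lambda=\int_0^1 t^n v^{(n+1)}_{a+t(\lambda-a)}\,\diff t ,
\]
so $\lambda\mapsto(\Phi_{v,a})_\lambda$ is strongly smooth. For rapid decay as $\lambda\to\infty$, I would use the original formula $v_\lambda/(\lambda-a)$ away from $a$. Its derivatives are finite sums of terms $v^{(j)}_\lambda(\lambda-a)^{-1-k}$, and each of these decays rapidly because the $v^{(j)}$ do. Near $\lambda=0$ there are two cases. If $a>0$, the factor $(\lambda-a)^{-1}$ and all its derivatives are smooth and bounded near $0$, so rapid decay of $v$ transfers directly. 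If $a=0$, then $(\Phi_{v,0})_\lambda=v_\lambda/\lambda$. Rapid vanishing of $v_\lambda$ and its derivatives at $0$ still gives rapid vanishing of the quotient, since one only loses a factor $\lambda^{-1-k}$ per term. The limit at $0$ is $0$, which is consistent with the remark $v'_0=0$.

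Finally, $\Phi_{v,a}\in L^2(\RR^3)$: we have $\|(\Phi_{v,a})_\lambda\|\le\sup_{[a-1,a+1]}\|v'\|$ near $a$, and the function is square integrable elsewhere. So $\Phi_{v,a}=U^{-1}(\cdot)$ is well defined and lies in $\mathcal E$.

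\emph{Step 2: part (b).} We have $(R_0(a+\iota\epsilon)v)_\lambda=v_\lambda/(\lambda-a-\iota\epsilon)$. Hence
\[
\|R_0(a+\iota\epsilon)v-\Phi_{v,a}\|^2=\int_0^\infty\Bigl|\frac{\lambda-a}{\lambda-a-\iota\epsilon}-1\Bigr|^2\|(\Phi_{v,a})_\lambda\|^2\diff\lambda .
\]
The multiplier equals $\epsilon^2/((\lambda-a)^2+\epsilon^2)$. It is bounded by $1$ and tends to $0$ for $\lambda\ne a$. Dominated convergence with majorant $\|(\Phi_{v,a})_\lambda\|^2\in L^1$ gives the limit $0$.

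\emph{Step 3: part (c).} This is the step needing the most care, because $\eta_{v,w}$ vanishes at $a$ but the principal value must be differentiated. By Proposition~\ref{CPV-properties}, the derivative of $\gamma(\eta_{v,w},\cdot)$ at $a$ equals $\gamma(\eta_{v,w}',a)$. Note that $\eta_{v,w}(\lambda)=(\lambda-a)^2 g(\lambda)$, where $g(\lambda)=\langle(\Phi_{v,a})_\lambda,(\Phi_{w,a})_\lambda\rangle$ for $\lambda\ge0$ and $g=0$ for $\lambda<0$; moreover $g\in\mathcal S(\RR)$ by part (a). Then
\[
\eta_{v,w}'(x)/(x-a)=2g(x)+(x-a)g'(x)
\]
is integrable and has no singularity, so the principal value is an ordinary integral. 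Integrating by parts gives $\int (x-a)g'(x)\,\diff x=-\int g$, with boundary terms vanishing by decay. The integration by parts is legitimate because $g$ is smooth on $\RR$: when $a=0$, $\Phi$ vanishes to infinite order at $0$. Therefore
\[
\gamma(\eta_{v,w}',a)=\int_\RR g=\int_0^\infty\langle(\Phi_{v,a})_\lambda,(\Phi_{w,a})_\lambda\rangle\diff\lambda=\langle\Phi_{v,a},\Phi_{w,a}\rangle .
\]
The main obstacle I expect is exactly the bookkeeping at $\lambda=0$ when $a=0$, namely checking that $g$ and $\eta$ extended by zero remain Schwartz. This follows from the rapid vanishing established in Step 1.
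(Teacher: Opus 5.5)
Your proposal is correct and follows the paper's proof in all three parts: Taylor's theorem at $a$ gives smoothness in (a), dominated convergence in the spectral representation gives (b), and Proposition~\ref{CPV-properties} followed by an integration by parts gives (c). Your explicit integral form of the Taylor remainder, the sharper bound in (b), and the factorization $\eta_{v,w}=(\lambda-a)^2 g$ are only more detailed versions of the paper's steps; the paper instead integrates $\eta_{v,w}'/(\lambda-a)$ by parts directly to obtain $\int_{\mathbb{R}}\eta_{v,w}(\lambda)(\lambda-a)^{-2}\,\diff\lambda$.
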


\begin{proof}
Since $v \in \mathcal{E}$, the function
$\lambda \mapsto (\Phi_{v,a})_\lambda$ is strongly smooth for $\lambda \neq a$
and decays rapidly as $\lambda \to 0$ and $\lambda \to \infty$.
Moreover, since $v_a=0$, repeated application of Taylor's theorem at $\lambda=a$ shows that the function
$\lambda\mapsto(\Phi_{v,a})_\lambda$ is strongly differentiable to arbitrary order at $\lambda=a$ and hence $\Phi_{v,a} \in \mathcal E.$ This proves part~(a).

To prove part~(b), observe that
\[
\lim_{\epsilon\to0^+}\frac{v_\lambda}{\lambda-(a+\iota\epsilon)}
=
(\Phi_{v,a})_\lambda
\quad \text{in } L^{2}(S^{2})
\quad \text{for } \lambda \neq a,
\]
and that
\[
\left\|
\frac{v_\lambda}{\lambda-(a+\iota\epsilon)}
-
(\Phi_{v,a})_\lambda
\right\|_{L^{2}(S^{2})}
\le
2\,\|(\Phi_{v,a})_\lambda\|_{L^{2}(S^{2})}.
\]
Furthermore,
\[
\|R_{0}(a+\iota\epsilon)v - \Phi_{v,a}\|^{2}
=
\int_{0}^{\infty}
\left\|
\frac{v_\lambda}{\lambda-(a+\iota\epsilon)}
-
(\Phi_{v,a})_\lambda
\right\|_{L^{2}(S^{2})}^{2}
\, \mathrm d\lambda.
\]
Since $\Phi_{v,a}\in \mathcal{E}$, the integrand is dominated by an
$L^{1}$-function and part (b) therefore follows from the Lebesgue dominated
convergence theorem. For $v,w\in\mathcal E$ with $v_a=w_a=0$ \[\eta_{v,w}(a)=\langle v_a,w_a\rangle=0\quad\text{and}\quad\eta'_{v,w}(a)=\langle v'_a,w_a\rangle+\langle v_a,w'_a\rangle=0.\] Hence the function $\lambda \longmapsto \frac{\eta'_{v,w}(\lambda)}{\lambda-a}
$
belongs to $L^{1}(\mathbb{R})$. By~\eqref{derivativeprincipalvalue}, we obtain
\[
\left.\frac{\partial}{\partial \lambda}
\gamma(\eta_{v,w},\lambda)\right|_{\lambda=a}
=
\gamma(\eta'_{v,w},a)
=
\int_{\mathbb{R}}
\frac{\eta'_{v,w}(\lambda)}{\lambda-a}
\, \mathrm d\lambda.
\]
An integration by parts yields
\[
\int_{\mathbb{R}}
\frac{\eta'_{v,w}(\lambda)}{\lambda-a}
\, \mathrm d\lambda
=
\int_{\mathbb{R}}
\frac{\eta_{v,w}(\lambda)}{(\lambda-a)^{2}}
\, \mathrm d\lambda
=\int_0^\infty\left\langle \frac{v_\lambda}{\lambda-a},\frac{w_\lambda}{\lambda-a}\right\rangle\diff\lambda=
\langle \Phi_{v,a},\, \Phi_{w,a} \rangle,
\]
which proves part~(c).
\end{proof}
\section{Rank-two perturbation of the Laplacian on $L^2(\RR^3)$}\label{4} 
\noindent Let \[V=\sum_{j=1}^2\langle\cdot,u_j\rangle u_j\] where $u_1,u_2\in \mathcal{E}$ and $u_1\perp u_2$. For $\alpha\in\mathbb R,$ consider the perturbed operator \[H_\alpha:=H_0+\alpha V\quad\text{on }L^2(\RR^3).\] Since $V$ is a finite-rank self-adjoint operator, the operator $H_\alpha$ is self-adjoint with domain $\mathcal{D}(H_\alpha)=\mathcal{D}(H_0)$, and its essential spectrum is $[0,\infty)$. We denote by $R_{\alpha}(z)$, the resolvent $(H_{\alpha}-z)^{-1}$ of $H_{\alpha}$,
defined for $\Im z \neq 0$ and by $E_{\alpha}$ its associated spectral measure. For notational simplicity, we denote $(u_j)_\lambda$ by $u_{j,\lambda}$ and $\eta_{u_j,u_k}$ by $\eta_{jk}$ for $j,k\in\{1,2\}$. 

We recall Stone's formula, which relates the spectral projections of $H_\alpha$ to the boundary values of its resolvent. For any $v,w \in L^2(\RR^3)$,
\begin{equation}\label{stone}
\frac{1}{2}\left(\langle E_\alpha(a,b)v,w\rangle+\langle E_\alpha[a,b]v,w\rangle\right)
= \lim_{\varepsilon\to 0^+}\,\frac{1}{2\pi i}\int_a^b
\big\langle \big( R_\alpha(\lambda+i\varepsilon)-R_\alpha(\lambda-i\varepsilon)\big)v,w \big\rangle\, d\lambda.
\end{equation} 
Thus, the analysis of spectral properties of $H_\alpha$ reduces to understanding the boundary behaviour of its resolvent as the parameter $\lambda\pm\iota\epsilon$ approaches the spectrum. To this end, we first relate the resolvent of $H_\alpha$ to that of $H_0.$

Let us define the linear map $\tau :L^2(\RR^3)\to\CC^2$ as follows: \begin{equation}
    \label{tau}\tau v= \begin{bmatrix}
          \langle v,u_1\rangle \\
\langle v, u_2\rangle \\
         \end{bmatrix}.\end{equation}
 Then $\tau ^*\tau =V$ and for $\Im z \neq 0$, the resolvent identity yields
\[
    R_\alpha(z) - R_0(z)
    = -\alpha R_0(z) V R_\alpha(z)
    = -\alpha R_0(z)\tau^* \tau R_\alpha(z).
\]
This implies \[
    B(\alpha,z)\tau R_\alpha(z)
    = \tau R_0(z)
\] where $B(\alpha,z)=I + \alpha \tau R_0(z)\tau^*.$ Note that the operator $B(\alpha,z)$ is invertible for $\Im z \neq 0$ and $B(\alpha,z)^{-1}=I - \alpha\,\tau R_{\alpha}(z)\tau^{*}.$ Consequently, we obtain
\begin{equation}\label{b(alpha)}
\tau R_{\alpha}(z)
=
B(\alpha,z)^{-1}\,\tau R_{0}(z).
\end{equation}   

From now on, we regard \( B(\alpha, z) \) as a \( 2 \times 2 \) matrix with respect to the standard orthonormal basis \( \{e_1, e_2\} \) of \( \mathbb{C}^2 \).
The following lemma relates the matrix elements of the resolvents of $H_\alpha$ and $H_0$.
\begin{lemma}\label{newlemma}
    For $v,w\in L^2(\RR^3)$ and $\Im z\neq0$, we have 
    \begin{equation}\label{R1}
        \langle  R_\alpha(z)v,w\rangle =\langle R_0(z)v,w\rangle-\frac{\alpha}{\det\,B(\alpha,z)}\sum_{j,k=1}^N\tilde{b}_{jk}(\alpha,z)\langle R_0(z)v,u_j\rangle \langle R_0(z)u_k,w\rangle
    \end{equation}where $\tilde{b}_{jk}(\alpha,z)$ denote the $(j,k)$th entry of the cofactor matrix associated with
$B(\alpha, z).$
\end{lemma}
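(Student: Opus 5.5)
We will start from the second resolvent identity already recorded above, in the form
\[
R_\alpha(z)=R_0(z)-\alpha R_0(z)\tau^*\tau R_\alpha(z),
\]
and substitute \eqref{b(alpha)}, namely $\tau R_\alpha(z)=B(\alpha,z)^{-1}\tau R_0(z)$. This gives the operator identity
\[
R_\alpha(z)=R_0(z)-\alpha R_0(z)\tau^*B(\alpha,z)^{-1}\tau R_0(z),
\]
valid for $\Im z\neq 0$. The invertibility of $B(\alpha,z)$ was noted before the lemma, so $\det B(\alpha,z)\neq 0$.

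Next we take matrix elements. Pairing with $v$ and $w$ gives
\[
\langle R_\alpha(z)v,w\rangle=\langle R_0(z)v,w\rangle-\alpha\langle B(\alpha,z)^{-1}\tau R_0(z)v,\ \tau R_0(z)^*w\rangle_{\CC^2}.
\]
Since $R_0(z)^*=R_0(\bar z)$, we have $\langle R_0(z)u_k,w\rangle=\langle u_k,R_0(\bar z)w\rangle$. The $k$-th component of $\tau R_0(\bar z)w$ is $\langle R_0(\bar z)w,u_k\rangle$, which is the complex conjugate of $\langle R_0(z)u_k,w\rangle$. The $j$-th component of $\tau R_0(z)v$ is $\langle R_0(z)v,u_j\rangle$. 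Expanding the $\CC^2$ inner product in the standard basis $\{e_1,e_2\}$ then yields
\[
\sum_{j,k}\big(B(\alpha,z)^{-1}\big)_{kj}\,\langle R_0(z)v,u_j\rangle\,\langle R_0(z)u_k,w\rangle .
\]

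Finally, we use Cramer's rule, $B^{-1}=\frac{1}{\det B}\operatorname{adj}B$ with $\operatorname{adj}B=\tilde B^{T}$, where $\tilde B=(\tilde b_{jk})$ is the cofactor matrix. This gives $(B^{-1})_{kj}=\tilde b_{jk}/\det B$, which is exactly \eqref{R1}, with $N=2$.

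The only real care needed is the index bookkeeping. The conjugation in the second slot of the inner product must turn the $w$-factor into $\langle R_0(z)u_k,w\rangle$ and not into its conjugate. The transpose in the adjugate must land on $\tilde b_{jk}$ rather than $\tilde b_{kj}$. Both are checked directly with the conventions above: inner products linear in the first slot, and the entry $B_{jk}$ taken as the $j$-th component of $Be_k$.
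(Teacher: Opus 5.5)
Your argument is correct and is essentially the paper's: the resolvent identity, the relation $\tau R_\alpha(z)=B(\alpha,z)^{-1}\tau R_0(z)$ and Cramer's rule, and your index bookkeeping (including $(B^{-1})_{kj}=\tilde b_{jk}/\det B$ under the convention $B_{jk}=\langle Be_k,e_j\rangle$) checks out. The only difference is that you substitute directly into $R_0(z)VR_\alpha(z)$, whereas the paper uses the other ordering $R_\alpha(z)VR_0(z)$ and then recovers $\langle R_\alpha(z)u_j,w\rangle$ by passing to $\bar z$ via $B(\alpha,z)^*=B(\alpha,\bar z)$ and conjugating; this makes your version slightly more direct.
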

\begin{proof}
    By the resolvent identity, we obtain  \begin{equation}\label{R2}\langle R_\alpha(z)v,w\rangle=\langle R_0(z)v,w\rangle-\alpha\sum_{j=1}^N\langle R_0(z)v,u_j\rangle \langle R_\alpha(z)u_j,w\rangle.\end{equation}
To compute $\langle R_\alpha(z)u_j,w\rangle$, by \eqref{b(alpha)} and the fact that $B(\alpha,z)^*=B(\alpha,\overline{z})$, we have\[\langle  R_\alpha(z)w,u_j\rangle=\langle \tau R_\alpha(z)w,e_j\rangle=\langle R_0(z)w,\tau^*B(\alpha,\overline{z})^{-1}e_j\rangle.\]
Now,
\begin{align*}\tau^*B(\alpha,\overline{z})^{-1}e_j&=\frac{1}{\det\,B(\alpha,\overline{z})}\tau^*(\tilde{b}_{j1}(\alpha,\overline{z}),\dots,\tilde{b}_{jN}(\alpha,\overline{z}))^{\top}=\frac{1}{\det\,B(\alpha,\overline{z})}\sum_{k=1}^N\tilde{b}_{jk}(\alpha,\overline{z})u_k.\end{align*}
       Therefore,  \begin{align*}\lim_{\epsilon\to0^+}\langle  R_\alpha(z)w,u_j\rangle&=\frac{1}{\det\,B(\alpha,z)}\sum_{k=1}^N\overline{\tilde{b}_{jk}(\alpha,\overline{z})}\langle R_0(\lambda\pm\iota0)w,u_k\rangle\end{align*}which implies \begin{equation} \label{R3}
         \langle  R_\alpha(z)u_j,w\rangle=  \overline{ \langle  R_\alpha(\overline{z})w,u_j\rangle}=\frac{1}{\det\,B(\alpha,z)}\sum_{k=1}^N\tilde{b}_{jk}(\alpha,z)\langle R_0(z)u_k,w\rangle.
       \end{equation} 
       Substituting \eqref{R3} into \eqref{R2}, \eqref{R1} follows.
\end{proof}
Note that, since $u_1,u_2 \in \mathcal{E}$,
it follows from~\eqref{eqn-laplacianresolventlimit} that
 \begin{equation}\label{uiresolventlimit}
      \lim_{\epsilon \to 0^+} 
    \langle R_0(\lambda \pm \iota\epsilon)u_j, u_k \rangle
    = \gamma(\eta_{jk}, \lambda)
      \pm\iota\pi\, \eta_{jk}(\lambda),\qquad j,k\in\{1,2\}
\end{equation}uniformly for $\lambda$ in compact sets. Hence, for $\lambda\in\RR$, we define $B(\alpha,\lambda\pm\iota0)\in\mathcal{B}(\CC^2)$ by \[B(\alpha,\lambda\pm\iota0) := \lim_{\epsilon \to 0^+} B(\alpha,\lambda \pm \iota\epsilon) \]where the limit is taken in the strong operator topology on
$\mathcal{B}(\mathbb{C}^{2})$.  
 
Here the strong limit is equivalent to
the convergence of all matrix elements with respect to the standard basis, which
is guaranteed by~\eqref{uiresolventlimit} and $B(\alpha,\lambda+\iota0)$ admits the
decomposition
\[B(\alpha,\lambda+\iota0)
    = C(\alpha, \lambda) + \iota D(\alpha, \lambda),\] where $C(\alpha,\lambda)$ and $D(\alpha,\lambda)$ are Hermitian matrices with entries  
\begin{equation}
 c_{jk}(\alpha, \lambda)
    = \delta_{jk} + \alpha\, \gamma(\eta_{kj}, \lambda),\quad d_{jk}(\alpha, \lambda)= \alpha\pi\, \eta_{kj}(\lambda).
\end{equation} Since $B(\alpha,z)^*=B(\alpha,\overline{z})$ for $\Im z\neq0$, it follows that \[B(\alpha,\lambda-\iota0)=B(\alpha,\lambda+\iota0)^*=C(\alpha,\lambda)-\iota D(\alpha,\lambda).\]
Note that, if $v,w\in \mathcal E$, then by Lemma \ref{newlemma} and \eqref{eqn-laplacianresolventlimit}, \[\langle R_\alpha(\lambda\pm\iota0)v,w\rangle:=\lim\limits_{\epsilon\to0^+}\langle R_\alpha(\lambda\pm\iota\epsilon)v,w\rangle\]exists if and only if $B(\alpha,\lambda\pm\iota0)$ is invertible. Let $\tilde{b}_{jk}(\alpha, \lambda\pm\iota0),~\tilde{c}_{jk}(\alpha, \lambda)$ and $\tilde{d}_{jk}(\alpha, \lambda)$ denote the coefficients of the cofactor matrices corresponding to
$B(\alpha, \lambda\pm\iota0)$, $C(\alpha, \lambda)$ and $D(\alpha, \lambda)$ respectively. 

Define
$F(\alpha, \lambda)
  := \det(B(\alpha,\lambda+\iota0))
    = F_1(\alpha, \lambda) + \iota F_2(\alpha, \lambda),$ where
\begin{equation}\label{F1expression}
    F_1(\alpha, \lambda)
    = \left(1 + \alpha \gamma(\eta_{11}, \lambda)\right)
      \left(1 + \alpha \gamma(\eta_{22}, \lambda)\right)
      - \alpha^2 \left|\gamma(\eta_{12}, \lambda)\right|^2-\alpha^2\pi^2(\eta_{11}(\lambda)\eta_{22}(\lambda)-|\eta_{12}(\lambda)|^2),
\end{equation}
and
\begin{equation}\label{F2expression}
    F_2(\alpha, \lambda)=\alpha\pi\sum_{j,k=1}^2\tilde{c}_{jk}(\alpha,\lambda)\eta_{kj}(\lambda).
\end{equation}
Note that, by the Birman--Schwinger principle (see for example \cite[Section 1.2.8]{FrankLaptevWeidl}), $H_{\alpha}$ has a discrete
eigenvalue $\mu<0$ if and only if $0$ is an eigenvalue of $B(\alpha,\mu+\iota0)$.
The following result provides a necessary and sufficient condition  $\mu\geq{0}$ to be an embedded eigenvalue of $H_{\alpha}$. The proof follows
the arguments of~\cite[Proposition~10.17]{KBSBook}.

\begin{theorem}\label{thm-eigenvalue}
A real number $\mu\geq 0$ is an eigenvalue of $H_{\alpha}$ if and only if $0$ is an eigenvalue of $B(\alpha,\mu+\iota0)$. Moreover, the eigenspaces \text{Ker}$(B(\alpha,\mu+\iota0))$ and $\text{Ker}(H_{\alpha}-\mu)$ are linearly isomorphic.   

\end{theorem}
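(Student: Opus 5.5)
I will construct explicit linear maps in both directions between $\mathrm{Ker}\,B(\alpha,\mu+\iota0)$ and $\mathrm{Ker}(H_\alpha-\mu)$ and check that they are mutually inverse. The model is the rank-one argument of \cite[Proposition~10.17]{KBSBook}, with Lemma~\ref{derivative} supplying the boundary value of the free resolvent. We may assume $\alpha\neq0$, since for $\alpha=0$ both kernels are trivial: $B(0,\cdot)=I$ and $H_0$ has no eigenvalues.

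\emph{From eigenvectors of $H_\alpha$ to $\mathrm{Ker}\,B$.} Let $H_\alpha\psi=\mu\psi$ with $\psi\neq0$, and put $c=\tau\psi\in\CC^2$. Then $(H_0-\mu)\psi=-\alpha\tau^*c=-\alpha\sum_j c_ju_j$.
\begin{enumerate}[label=(\roman*)]
\item $c\neq0$, since otherwise $\psi$ would be an eigenvector of $H_0$.
\item In the spectral representation, $(\lambda-\mu)\psi_\lambda=-\alpha\sum_j c_j u_{j,\lambda}$ for a.e.\ $\lambda$. By strong continuity of the right-hand side and $\psi\in L^2$, the vector $v:=\sum_j c_ju_j\in\mathcal E$ satisfies $v_\mu=0$. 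Otherwise $\|\psi_\lambda\|\gtrsim|\lambda-\mu|^{-1}$ near $\mu$, which is not square integrable. For $\mu=0$ this step is vacuous but harmless.
\item Hence $\psi=-\alpha\Phi_{v,\mu}$ in the notation of \eqref{modified}.
\item By Lemma~\ref{derivative}(b), $\Phi_{v,\mu}=\lim_{\epsilon\to0^+}R_0(\mu+\iota\epsilon)v$. Therefore $c=\tau\psi=-\alpha\lim_{\epsilon}\tau R_0(\mu+\iota\epsilon)\tau^*c$, that is, $B(\alpha,\mu+\iota0)c=0$.
\end{enumerate}
So the map $\psi\mapsto\tau\psi$ sends $\mathrm{Ker}(H_\alpha-\mu)$ injectively into $\mathrm{Ker}\,B(\alpha,\mu+\iota0)$.

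\emph{From $\mathrm{Ker}\,B$ back to eigenvectors.} Let $c\in\mathrm{Ker}\,B(\alpha,\mu+\iota0)$ be nonzero and set $v=\tau^*c=\sum_jc_ju_j$.
\begin{enumerate}[label=(\roman*)]
\item Take the imaginary part of $\langle B(\alpha,\mu+\iota0)c,c\rangle=0$. This gives $\langle D(\alpha,\mu)c,c\rangle=\alpha\pi\sum_{j,k}c_k\overline{c_j}\eta_{kj}(\mu)=\alpha\pi\|v_\mu\|^2_{L^2(S^2)}=0$. Since $\alpha\neq0$, we get $v_\mu=0$. This is the key step replacing the $L^2$ argument above.
\item Define $\psi=-\alpha\Phi_{v,\mu}$. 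It lies in $\mathcal E$ by Lemma~\ref{derivative}(a).
\item $\psi\in\mathcal D(H_0)$ and $(H_0-\mu)\psi=-\alpha v$, read off from $(\lambda-\mu)(\Phi_{v,\mu})_\lambda=v_\lambda$.
\item By Lemma~\ref{derivative}(b) and the definition of $B$, $\tau\psi=-\alpha\tau\Phi_{v,\mu}=-\alpha\lim_\epsilon\tau R_0(\mu+\iota\epsilon)\tau^*c=c-B(\alpha,\mu+\iota0)c=c$.
\item Hence $(H_0+\alpha\tau^*\tau-\mu)\psi=-\alpha v+\alpha\tau^*c=0$. Also $\psi\neq0$, because $\tau\psi=c\neq0$.
\end{enumerate}

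\emph{Isomorphism.} The map $c\mapsto-\alpha\Phi_{\tau^*c,\mu}$ is linear. The computations above show that it is a two-sided inverse of $\tau$ restricted to $\mathrm{Ker}(H_\alpha-\mu)$. Indeed, $\tau\psi=c$, and conversely $\psi=-\alpha\Phi_{\tau^*\tau\psi,\mu}$ for any eigenvector $\psi$. This gives both the equivalence and the linear isomorphism.

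The main obstacle is the regularity at $\lambda=\mu$: showing $v_\mu=0$ in each direction so that $\Phi_{v,\mu}$ is defined and lies in $L^2$. In the first direction this needs care, because the spectral identity holds only a.e. I would handle it via strong continuity of $\lambda\mapsto v_\lambda$ together with non-integrability of $|\lambda-\mu|^{-2}$. In the threshold case $\mu=0$ I would check separately that the weight $\lambda^{1/4}$ in \eqref{iso} causes no trouble; the remark after \eqref{modified} that $v_0'=0$ covers this.
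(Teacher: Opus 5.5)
Your proposal is correct and follows essentially the paper's argument. The shared steps are: the imaginary-part computation forcing $(\tau^*c)_\mu=0$; Lemma~\ref{derivative}(a),(b) to produce the eigenvector $\Phi_{\tau^*c,\mu}$; the $L^2$ argument giving $(Vf)_\mu=0$ in the converse; and the mutually inverse maps $\tau$ and $c\mapsto-\alpha\Phi_{\tau^*c,\mu}$, which are the paper's $T^{-1}$ and $T$ up to the factor $-\alpha$. Your explicit handling of $\alpha=0$ and of nontriviality ($c\neq0$, $\psi\neq0$) covers small points the paper leaves implicit.
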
 \begin{proof}Firstly assume $\mu>0.$
Suppose $\xi\in\text{Ker}(B(\alpha,\mu+\iota0)).$ Then, by ~\eqref{eqn-laplacianresolventlimit},
\begin{align*}
    ||(\tau^*\xi)_{\mu}||^2
   & = \frac{1}{\pi}\lim_{\epsilon\to 0^+}
      \Im \langle R_0(\mu + \iota\epsilon)\tau^*\xi, \tau^*\xi \rangle=\frac{1}{\pi}\lim_{\epsilon\to 0^+}
       \Im \langle \tau R_0(\mu + \iota\epsilon)\tau^*\xi, \xi \rangle\\&= \frac{1}{\alpha\pi}\Im \langle (B(\alpha,\mu+\iota0)-I)\xi, \xi \rangle
     = \frac{1}{\alpha\pi}\Im \langle -\xi, \xi \rangle = 0.
\end{align*}
Hence $(\tau^{*}\xi)_{\mu}=0$. Since $\tau^{*}\xi \in \mathcal{E}$, it follows from
Lemma~\ref{derivative}\,(a) that
$\Phi_{\tau^{*}\xi,\mu} \in \mathcal{E}$ and therefore
$\Phi_{\tau^{*}\xi,\mu} \in \mathcal{D}(H_{\alpha})$.
Furthermore, by Lemma~\ref{derivative} (b), we have
\[
    R_0(\mu + \iota\epsilon)\tau^*\xi \to \Phi_{\tau^*\xi,\mu}
    \quad \text{in } L^2(\RR^3) \text{ as } \epsilon \to 0^+,
\]
which implies that
\[
    \tau R_0(\mu + \iota\epsilon)\tau^*\xi \to \tau\Phi_{\tau^*\xi,\mu}.
\]
Hence $\tau\Phi_{\tau^*\xi,\mu} = \frac{1}{\alpha}(B(\alpha,\mu+\iota0)-I)\xi = -\tfrac{1}{\alpha}\xi$ and therefore
\[
    (H_{\alpha} - \mu)\Phi_{\tau^*\xi,\mu}
    = (H_0 - \mu)\Phi_{\tau^*\xi,\mu} + \alpha V\Phi_{\tau^*\xi,\mu}
    = \tau^*\xi + \alpha \tau^*\tau \Phi_{\tau^*\xi,\mu}=0.
\]
Conversely, suppose $f\in\text{Ker}(H_\alpha-\mu)$.  
Then
\[
    (\lambda - \mu)f_\lambda + \alpha(Vf)_\lambda = 0
    \quad \text{for a.e. } \lambda \in (0,\infty),
\]
which implies
\begin{equation}\label{eq101}
     f_\lambda = -\frac{\alpha(Vf)_\lambda}{\lambda - \mu}  \quad \text{for a.e. } \lambda \in (0,\infty).
\end{equation}
Since $f \in L^2(\RR^3)$, the function $\lambda\mapsto f_\lambda$ is in $L^2((0,\infty),L^2(S^2))$ which implies $(Vf)_{\mu} = 0$.  
By Lemma~\ref{derivative}~(b) and \eqref{eq101},
\[
    \lim_{\epsilon\to0^+} R_0(\mu + \iota\epsilon)Vf
    = \Phi_{Vf,\mu}=-\frac{1}{\alpha}f.
\]

This implies $\tau f\in$Ker$(B(\alpha,\mu+\iota0)).$ Finally, note that
\[
   T:\text{Ker}(B(\alpha,\mu+\iota0))\to\text{Ker}(H_{\alpha}-\mu),\quad T \xi = \Phi_{\tau^*\xi,\mu},
   \]is a linear isomorphism with $T^{-1} f = -\alpha\tau f.$ 
   
   It remains to consider the case $\mu=0$. The argument proceeds as above, noting
that, since $\tau^{*}\xi \in \mathcal{E}$, we have $(\tau^{*}\xi)_{0}=0$ and hence
$\Phi_{\tau^{*}\xi,0} \in \mathcal{E}$. Similarly, since $u_{1},u_{2}\in\mathcal{E}$,
it follows that $Vf \in \mathcal{E}$ and therefore $(Vf)_{0}=0$. The remainder of
the proof follows in the same way.
This completes the proof.

\end{proof}

\bcor\label{Cor-Modeleigenvalue}
Suppose that $B(\alpha_0,\lambda_0+\iota0)=0$ for some $\alpha_0\in\RR$ and $\lambda_0\geq 0$. Then:
\begin{enumerate}[label=(\alph*)]
    \item $\lambda_0$ is an eigenvalue of $H_{\alpha_0}$ of multiplicity two and the corresponding eigenspace is generated by
   $\phi_1 = \Phi_{u_1,\lambda_0}$ and $ 
        \phi_2 = \Phi_{u_2,\lambda_0}$.
    \item The eigenfunctions satisfy
    \[
        \langle u_j, \phi_k \rangle = -\frac{\delta_{jk}}{\alpha_0}, \qquad j,k \in \{1,2\}.
    \]
\end{enumerate}
\ecor

\begin{proof}
By hypothesis, we have 
\[
u_{1,\lambda_0}= u_{2,\lambda_0} = \delta_{jk} + \alpha_0\, \gamma(\eta_{jk}, \lambda_0) = 0.
\]
Hence, part~(a) follows directly from Theorem~\ref{thm-eigenvalue}.  
Furthermore, since $\gamma(\eta_{jk}, \lambda_0) = \langle u_j, \phi_k \rangle$ for $j,k \in \{1,2\}$, part~(b) follows immediately.
\end{proof} 

\begin{remark}
    Note that if $\alpha_0>0$, then $H_{\alpha_0}$ is a strictly positive operator and hence $0$ cannot be a threshold eigenvalue. Therefore, for $\lambda_0=0$ to be a possible threshold eigenvalue, it is necessary that $\alpha_0<0$.
\end{remark}

\paragraph{\textbf{Calculation of Hessian and its determinant:}} To study the spectral behaviour of $H_\alpha$ near the degenerate eigenvalue $\lambda_0$ of $H_{\alpha_0}$, we analyze the zeros of $F_1(\alpha,\cdot)$. By Lemma~\ref{derivative}\,(c), $\frac{\partial F_1}{\partial\lambda}(\alpha_0,\lambda_0)=\frac{\partial F_1}{\partial\alpha}(\alpha_0,\lambda_0)=0$, so we further look at the Hessian, $D^2{F_1}$, at $(\alpha_0,\lambda_0).$ 
Again by Lemma~\ref{derivative}\,(c), we have
\begin{align}\label{eqn-lambdaderivative}
 \frac{\partial^2 F_1}{\partial\lambda^2}(\alpha_0,\lambda_0)
    &= 2\alpha_0^2\bigl(\|\phi_1\|^2\|\phi_2\|^2 - |\langle \phi_1,\phi_2\rangle|^2\bigr), \\[6pt]\label{eqn-1111}
 \frac{\partial^2 F_1}{\partial\alpha^2}(\alpha_0,\lambda_0)
    &= \frac{2}{\alpha_0^2},\ \frac{\partial^2 F_1}{\partial\alpha\,\partial\lambda}(\alpha_0,\lambda_0)
   = -\bigl(\|\phi_1\|^2 + \|\phi_2\|^2\bigr).
\end{align}
Thus the Hessian, $D^2F_1$, at point $(\alpha_0,\lambda_0)$ is invertible when it has non-zero determinant
\begin{equation}\label{eqn-Hessian}
   \det \left(D^2{F_1}(\alpha_0,\lambda_0)\right)
   = -\bigl(\|\phi_1\|^2 - \|\phi_2\|^2\bigr)^2
     - 4\,|\langle \phi_1,\phi_2\rangle|^2
\end{equation} 
 \noindent In the following lemma, we discuss the zeros of $F_1(\alpha,\cdot)$ near $\alpha=\alpha_0.$

\begin{lemma}\label{resonance}
Suppose that $B(\alpha_0,\lambda_0+\iota0)=0$ and $\det \left(D^2{F_1}(\alpha_0,\lambda_0)\right)<0$ for some $\alpha_0\in\RR$ and $\lambda_0\geq0$.
Then there exist open intervals $J$ around $\lambda_0$ and $I$ around $\alpha_0$ and two distinct smooth functions 
\[
\lambda_j : I \to J, \quad j=1,2,
\]
such that $\lambda_j(\alpha_0) = \lambda_0$ and 
$
F_1(\alpha, \lambda_j(\alpha)) = 0 \quad \text{for all } \alpha \in I
$
with \begin{equation}\label{eqn-derivativelambda}
\lambda_j'(\alpha_0)
   = \frac{\|\phi_1\|^2 + \|\phi_2\|^2 + (-1)^{j+1} d}
           {2\alpha_0^2\bigl(\|\phi_1\|^2\|\phi_2\|^2 - |\langle \phi_1,\phi_2\rangle|^2\bigr)},
\end{equation}where $d=\sqrt{-\det \left(D^2{F_1}(\alpha_0,\lambda_0)\right)}.$
If $\lambda_0>0$ the interval $J$  can be chosen to be in $(0,\infty)$ and henceforth $J$ will be assumed to be so. 
\end{lemma}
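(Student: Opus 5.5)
The plan is to treat $F_1$ as a smooth real function of $(\alpha,\lambda)$ near $(\alpha_0,\lambda_0)$ with a nondegenerate saddle critical point there. Its zero set is then locally the union of two smooth curves crossing transversally, and I will show each curve is a graph over $\alpha$.

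\textbf{Smoothness.} First I check that $F_1$ is $C^\infty$ on a neighbourhood of $(\alpha_0,\lambda_0)$, including when $\lambda_0=0$. By \eqref{F1expression}, $F_1$ is a polynomial in $\alpha$ whose coefficients are built from $\gamma(\eta_{jk},\cdot)$ and $\eta_{jk}$. Since $u_j\in\mathcal E$, each $\eta_{jk}\in\mathcal S(\RR)$, being smooth with rapid decay at $0$ and at $\infty$. Proposition~\ref{CPV-properties} then gives $\gamma(\eta_{jk},\cdot)\in C^\infty(\RR)$.

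\textbf{Critical point and Hessian.} At $(\alpha_0,\lambda_0)$ we have $F_1=0$ because $B(\alpha_0,\lambda_0+\iota0)=0$. Its first derivatives vanish by Lemma~\ref{derivative}(c), as noted before the statement. The second derivatives are given by \eqref{eqn-lambdaderivative}--\eqref{eqn-1111}. The key observation is that
\[
A:=\tfrac12\partial_\lambda^2F_1(\alpha_0,\lambda_0)=\alpha_0^2\bigl(\|\phi_1\|^2\|\phi_2\|^2-|\langle\phi_1,\phi_2\rangle|^2\bigr)>0 .
\]
This holds because, by Corollary~\ref{Cor-Modeleigenvalue}, $\phi_1,\phi_2$ span a two-dimensional eigenspace, so they are linearly independent and their Gram determinant is positive. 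It also forces $\alpha_0\neq0$.

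\textbf{Reduction by blow-up.} Rather than invoking the Morse lemma abstractly, I will use a blow-up along lines through the critical point, which gives the graph property directly. Write $s=\alpha-\alpha_0$ and $\lambda=\lambda_0+st$. By Taylor's theorem with integral remainder,
\[
F_1(\alpha_0+s,\lambda_0+st)=s^2G(s,t),
\]
where $G$ is smooth and
\[
G(0,t)=At^2+2Bt+C,\qquad B=\tfrac12\partial_\alpha\partial_\lambda F_1(\alpha_0,\lambda_0),\quad C=\tfrac12\partial_\alpha^2F_1(\alpha_0,\lambda_0).
\]
The discriminant is $4(B^2-AC)=-\det D^2F_1(\alpha_0,\lambda_0)=d^2>0$. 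Hence $G(0,\cdot)$ has two simple real roots
\[
t_j=\frac{-B+(-1)^{j+1}d/2}{A}.
\]
Substituting the values of $A$, $B$ from \eqref{eqn-lambdaderivative}--\eqref{eqn-1111} gives exactly the expression \eqref{eqn-derivativelambda}.

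\textbf{Construction of the curves.} At each simple root, $\partial_tG(0,t_j)\neq0$. The implicit function theorem therefore gives smooth functions $t_j(s)$ on a small interval around $0$ with $G(s,t_j(s))=0$ and $t_j(0)=t_j$. I then set $\lambda_j(\alpha)=\lambda_0+(\alpha-\alpha_0)\,t_j(\alpha-\alpha_0)$. These are smooth, satisfy $F_1(\alpha,\lambda_j(\alpha))=0$ and $\lambda_j(\alpha_0)=\lambda_0$, and have $\lambda_j'(\alpha_0)=t_j$. They are distinct because $t_1\neq t_2$.

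\textbf{Choice of intervals.} Shrinking $I$ makes both curves take values in a prescribed interval $J$ around $\lambda_0$. When $\lambda_0>0$, I choose $J\subset(0,\infty)$ and shrink $I$ accordingly.

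\textbf{Main obstacle.} The delicate point is justifying that the zero curves are graphs over $\alpha$, so that the Morse picture can be parametrised by $\alpha$. This is exactly where $A>0$, coming from linear independence of $\phi_1,\phi_2$, enters. A secondary check is the threshold case $\lambda_0=0$, where $\lambda_j$ may leave $[0,\infty)$; the lemma only asks for $J$ to be an open interval there, so nothing more is needed. If one prefers the Morse lemma, the same conclusion follows: $F_1=xy$ in local coordinates, and the two branches are transversal to the $\lambda$-direction since $A\neq0$.
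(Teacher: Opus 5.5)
Your proof is correct, but it takes a genuinely different route from the paper's.

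\textbf{The paper's route.} The paper applies the Morse lemma (Corollary~\ref{Cor-Morse}) to factor $F_1=gh$ near $(\alpha_0,\lambda_0)$. It uses $\partial_\lambda^2F_1(\alpha_0,\lambda_0)\neq0$ to get $\partial_\lambda g,\partial_\lambda h\neq0$ there, and applies the implicit function theorem to each factor. It then obtains \eqref{eqn-derivativelambda} by differentiating $F_1(\alpha,\lambda_j(\alpha))=0$ twice. That computation only shows that each $\lambda_j'(\alpha_0)$ is \emph{some} root of \eqref{lambdapath}. That the two branches pick \emph{different} roots rests tacitly on the transversality of $\{g=0\}$ and $\{h=0\}$ in the Morse chart.

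\textbf{What your route buys.} Your blow-up $F_1(\alpha_0+s,\lambda_0+st)=s^2G(s,t)$ avoids the Morse lemma entirely. It uses only Taylor's theorem and the one-variable implicit function theorem at the simple roots of $G(0,\cdot)$. It delivers $\lambda_j'(\alpha_0)=t_j$ with $t_1\neq t_2$ by construction. It even gives $\lambda_1(\alpha)\neq\lambda_2(\alpha)$ for $0<|\alpha-\alpha_0|$ small, since $\lambda_1-\lambda_2=s\,(t_1(s)-t_2(s))$; the paper needs exactly this in Lemma~\ref{lem-specconc}.

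\textbf{What the Morse route buys.} In return, the Morse route gives a complete local description of the zero set: after shrinking, $\{F_1=0\}\cap(I\times J)$ is \emph{exactly} the union of the two graphs. The paper uses this in Theorem~\ref{Thm-absolutecontinuity}, namely $F_1(\alpha,\lambda)\neq0$ for $\lambda\in J\setminus\{\lambda_1(\alpha),\lambda_2(\alpha)\}$ and $F_1(\alpha_0,\lambda)\neq0$ for $\lambda\neq\lambda_0$.

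\textbf{What your argument leaves out.} Your single chart $\lambda=\lambda_0+st$ only sees the cone $|\lambda-\lambda_0|\le M|\alpha-\alpha_0|$, so exhaustiveness is not part of your argument. The lemma as stated does not demand it, but it is easy to add. For $M$ large, $A>0$ gives $F_1(\alpha,\lambda)\ge\tfrac{A}{2}(\lambda-\lambda_0)^2>0$ on the complementary cone $|\lambda-\lambda_0|>M|\alpha-\alpha_0|$ near $(\alpha_0,\lambda_0)$. On $|t|\le M$, compactness and the uniqueness part of the implicit function theorem leave only $t=t_j(s)$.
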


\begin{remark}
    It is readily checked that $\lambda'_j(\alpha_0)>{0}$ for $j=1,2$, using the fact that $\phi_1,\phi_2$ are not linearly dependent,  and hence $\lambda_1, \lambda_2$ are increasing functions in a neighbourhood of $\alpha_0$. 
\end{remark}

\begin{proof}[Proof of Lemma~$\ref{resonance}$]
Since $\phi_1$ and $\phi_2$ are linearly independent, by \eqref{eqn-lambdaderivative}, $\frac{\partial^2F_1}{\partial\lambda^2}(\alpha_0,\lambda_0)\neq0$. Moreover as $\det(D^2{F_1})(\alpha_0,\lambda_0)<0$, by an application of Morse lemma (Corollary \ref{Cor-Morse}), there exists open intervals $I$ around $\alpha_0$ and $J$ around $\lambda_0$ and two distinct smooth functions $\lambda_j:I\to J$ such that $\lambda_j(\alpha_0) = \lambda_0$ and 
\[
F_1(\alpha, \lambda_j(\alpha)) = 0 \quad \text{for all } \alpha \in I 
\]  which proves part (a). Differentiating $F_1(\alpha,\lambda_j(\alpha))=0$ twice with respect to $\alpha$, we obtain
\[
\frac{\partial^2 F_1}{\partial\alpha^2}(\alpha,\lambda_j(\alpha))
 + 2\frac{\partial^2 F_1}{\partial\alpha\partial\lambda}(\alpha,\lambda_j(\alpha))\lambda_j'(\alpha)
 + \frac{\partial^2 F_1}{\partial\lambda^2}(\alpha,\lambda_j(\alpha))\lambda_j'(\alpha)^2
 + \frac{\partial F_1}{\partial\lambda}(\alpha,\lambda_j(\alpha))\lambda_j''(\alpha)
 = 0.
\]
Since $\dfrac{\partial F_1}{\partial\lambda}(\alpha_0,\lambda_0)=0$, evaluating the above at $\alpha=\alpha_0$ yields
\begin{equation}\label{lambdapath}
\frac{\partial^2 F_1}{\partial\alpha^2}(\alpha_0,\lambda_0)
 + 2\frac{\partial^2 F_1}{\partial\alpha\partial\lambda}(\alpha_0,\lambda_0)\lambda_j'(\alpha_0)
 + \frac{\partial^2 F_1}{\partial\lambda^2}(\alpha_0,\lambda_0)\lambda_j'(\alpha_0)^2
 = 0.
\end{equation}
Solving the quadratic equation~\eqref{lambdapath} in $\lambda_j'(\alpha_0)$ gives \eqref{eqn-derivativelambda}. 
\end{proof}
 
 We next analyze the spectral behavior of $H_\alpha$ near $\lambda_0>0$ under the assumptions of Lemma~\ref{resonance}. Depending on whether $F_2(\alpha,\lambda_j(\alpha))$ vanishes or not, we obtain either  the existence of a simple embedded eigenvalue or absolute continuity of the spectrum  as stated in the following theorem. 
 
 The threshold eigenvalue case $\lambda_0=0$ is discussed in Theorem~\ref{Threshold}.
 \begin{theorem}\label{Thm-absolutecontinuity}
Assume that the hypotheses of Lemma~$\ref{resonance}$ hold, let $I,J$ and
$\lambda_j(\alpha)$, $j=1,2$, be as in that lemma and assume $\lambda_0>0.$
 Then the following
statements hold:
\begin{enumerate}[label=(\alph*)]
    \item For $j=1,2$ if $F_{2}(\alpha,\lambda_{j}(\alpha))\neq 0$ for all
    $\alpha\in I\setminus\{\alpha_0\}$, then for every
    $\alpha\in I\setminus\{\alpha_0\}$ the spectrum of $H_\alpha$ in $J$ is purely 
    absolutely continuous and the spectrum of $H_{\alpha_0}$ in
    $J\setminus\{\lambda_0\}$ is absolutely continuous.

    \item If there exist $\alpha\in I\setminus\{\alpha_0\}$ and $j\in\{1,2\}$ such
    that $F_{2}(\alpha,\lambda_j(\alpha))=0$, then $\lambda_j(\alpha)$ is a simple
    eigenvalue of $H_\alpha$ with corresponding eigenvector $\Phi_{v_j,\lambda_j(\alpha)}$ where $v_j=u_2(\lambda_j(\alpha))u_1-u_1(\lambda_j(\alpha))u_2$.
\end{enumerate}
\end{theorem}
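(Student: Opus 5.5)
The plan is to combine Theorem~\ref{thm-eigenvalue} with the limiting absorption principle coming from Lemma~\ref{newlemma}.

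\textbf{Zeros of $F$ near $(\alpha_0,\lambda_0)$.} First I would show that, for $\alpha\in I$, the zeros of $F(\alpha,\cdot)$ in $J$ are exactly the points $\lambda_j(\alpha)$ at which $F_2$ also vanishes. The Morse lemma (Corollary~\ref{Cor-Morse}) gives that the zero set of $F_1$ in $I\times J$ is precisely the union of the two graphs $\{(\alpha,\lambda_j(\alpha))\}$, after shrinking $I$ and $J$ if needed. Hence $\det B(\alpha,\lambda+\iota0)=F_1+\iota F_2\neq 0$ for every $\lambda\in J$ off these graphs. Under hypothesis (a), $F$ does not vanish on $J$ for $\alpha\in I\setminus\{\alpha_0\}$. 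For $\alpha=\alpha_0$ it does not vanish on $J\setminus\{\lambda_0\}$, since both paths pass through $\lambda_0$ there.

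\textbf{Part (a).} Fix $v,w\in\mathcal E$ and a compact interval $K\subset J$ on which $F(\alpha,\cdot)\neq0$. By \eqref{uiresolventlimit}, the cofactors $\tilde b_{jk}$ and the determinant converge uniformly on $K$ as $\epsilon\to0^+$, and $|\det B(\alpha,\lambda+\iota\epsilon)|$ stays bounded below there. Lemma~\ref{newlemma} then gives that $\langle R_\alpha(\lambda\pm\iota\epsilon)v,w\rangle$ converges uniformly on $K$ to a continuous limit. Stone's formula \eqref{stone} therefore shows that $\diff\langle E_\alpha(\lambda)v,v\rangle/\diff\lambda=\frac1\pi\Im\langle R_\alpha(\lambda+\iota0)v,v\rangle$ is a bounded continuous density on $K$. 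Consequently $E_\alpha(K)v\in\mathcal H_{ac}(H_\alpha)$ for all $v$ in the dense set $\mathcal E$, hence for all $v\in L^2(\mathbb R^3)$, and so $E_\alpha(K)L^2\subset\mathcal H_{ac}$. Exhausting $J$ (respectively $J\setminus\{\lambda_0\}$) by such compacts gives the claim. One should also note that endpoints carry no mass: the open/closed average in \eqref{stone} is handled because the boundary values are continuous.

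\textbf{Part (b).} Set $\mu=\lambda_j(\alpha)>0$ with $F(\alpha,\mu)=0$. By Theorem~\ref{thm-eigenvalue}, $\mu$ is an eigenvalue, with eigenspace isomorphic to $\ker B(\alpha,\mu+\iota0)$. I must show this kernel is one-dimensional and identify $\xi$ with $\tau^*\xi=v_j$.

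Dimension two would force $B(\alpha,\mu+\iota0)=0$, so $u_{1,\mu}=u_{2,\mu}=0$ and $\delta_{jk}+\alpha\gamma(\eta_{jk},\mu)=0$, i.e.\ $\alpha$ is another point of double degeneracy. I would exclude this by noting that the Morse structure isolates $(\alpha_0,\lambda_0)$ as the only critical point of $F_1$ in $I\times J$. Indeed, $B=0$ forces $\partial_\lambda F_1=\partial_\alpha F_1=0$ by Lemma~\ref{derivative}(c), since all terms carry a factor of cofactors or of $\eta$'s vanishing to second order. This is the step I expect to be most delicate; shrinking $I,J$ so that $(\alpha_0,\lambda_0)$ is the unique critical point handles it.

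For the eigenvector, any $\xi\in\ker B$ satisfies $(\tau^*\xi)_\mu=0$, as shown in the proof of Theorem~\ref{thm-eigenvalue}. This reads $\xi_1u_{1,\mu}+\xi_2u_{2,\mu}=0$. Here I interpret $u_k(\mu)$ in the statement as the scalar coefficient: for a rank-one $\ker$, $D$ has rank at most one and $u_{1,\mu},u_{2,\mu}$ are proportional. Writing $u_{k,\mu}=u_k(\mu)e$ for a fixed unit vector $e\in L^2(S^2)$, this forces $\xi\propto(u_2(\mu),-u_1(\mu))$ when $u_{\cdot,\mu}\neq0$. The case $u_{1,\mu}=u_{2,\mu}=0$ reduces to the excluded one, since then $D=0$ and $C$ has rank at most one. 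Thus $\tau^*\xi\propto v_j$, and $T\xi=\Phi_{v_j,\mu}$ is the eigenvector.
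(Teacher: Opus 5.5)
Your proposal follows the paper's proof. In (a) you show $F=F_1+\iota F_2\neq0$ on $J$ (respectively on $J\setminus\{\lambda_0\}$) and combine Lemma~\ref{newlemma} with \eqref{uiresolventlimit} to get boundary values of $\langle R_\alpha(\lambda\pm\iota\epsilon)v,w\rangle$ on the dense set $\mathcal E$. Your Stone-formula argument is just an explicit form of the density criterion the paper quotes. In (b) you apply Theorem~\ref{thm-eigenvalue} to a rank-one $B(\alpha,\lambda_j(\alpha)+\iota0)$. You also supply two justifications the paper omits:
\begin{enumerate}[label=(\roman*)]
\item the zero set of $F_1$ in $I\times J$ is exactly the union of the two graphs;
\item the rank is exactly one: $B(\alpha,\mu+\iota0)=0$ would make $(\alpha,\mu)$ a critical point of $F_1$, and a nondegenerate critical point is isolated, so shrinking $I\times J$ excludes this.
\end{enumerate}

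The step that fails is your last sentence in (b). If $u_{1,\mu}=u_{2,\mu}=0$, you do get $D(\alpha,\mu)=0$ and $\operatorname{rank}C(\alpha,\mu)\le1$. But this is \emph{not} the excluded case $B=0$, because $C(\alpha,\mu)$ may be a nonzero rank-one matrix. Concretely, take $u_1,u_2$ whose $\lambda$-supports are disjoint and avoid $J$, scaled so that $1+\alpha_0\gamma(\eta_{jj},\lambda_0)=0$ and with $\|\phi_1\|\neq\|\phi_2\|$; then the hypotheses of Lemma~\ref{resonance} hold. In this example:
\begin{enumerate}[label=(\roman*)]
\item $F_2\equiv0$ on $J$ and $B=C$ is diagonal, so every point of both branches is an eigenvalue;
\item on the branch where $1+\alpha\gamma(\eta_{11},\lambda)=0$, the kernel is $\operatorname{span}\{e_1\}$ and the eigenvector is $\Phi_{u_1,\mu}\neq0$;
\item yet $v_j=u_2(\mu)u_1-u_1(\mu)u_2=0$.
\end{enumerate}

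In this subcase your critical-point argument still gives simplicity. The eigenvector, however, is $\Phi_{\tau^*\xi,\mu}$ with $\xi$ spanning $\ker C(\alpha,\mu)$, and the formula in the statement degenerates. So your argument actually proves simplicity in general, and the stated eigenvector only under the extra condition $(u_{1,\mu},u_{2,\mu})\neq(0,0)$. You should state that restriction rather than claim the case is excluded. The statement needs this condition, and the paper's one-line proof does not address it either.
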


\begin{proof}Assume that $F_{2}(\alpha,\lambda_{j}(\alpha))\neq 0$ for all
    $\alpha\in I\setminus\{\alpha_0\}$ and $j=1,2$.
To prove that the spectrum of $H_\alpha$ in $J$ is
    absolutely continuous, note that since $\mathcal E$ is dense in $L^2(\RR^3)$, it suffices by Corollary~3.28 in \cite{Teschl}, 
to show that 
\(\lim\limits_{\epsilon \to 0^{+}}\langle R_{\alpha}(\lambda + \iota \epsilon)v,w\rangle\) 
exists for all \(\lambda \in J\) and $v,w\in\mathcal E$. Using Lemma \ref{newlemma} and \eqref{eqn-laplacianresolventlimit}, this reduces to proving that
\[
F(\alpha,\lambda) \neq 0, 
\qquad \forall\, \lambda \in J, \ \alpha \in I \setminus \{\alpha_{0}\}.
\]
By Lemma~\ref{resonance}, for $\alpha \in I \setminus \{\alpha_0\}$,
\[
F_1(\alpha,\lambda) \neq 0 \quad \text{for } \lambda \in  J \setminus \{\lambda_1(\alpha),\lambda_2(\alpha)\},
\]
which implies \[
F(\alpha,\lambda) \neq 0 \quad \text{for } \lambda \in J \setminus \{\lambda_1(\alpha),\lambda_2(\alpha)\}.
\]
Moreover, since \[
F_2(\alpha,\lambda_j(\alpha)) \neq 0, \quad \forall\, \alpha \in I \setminus \{\alpha_0\},\,j=1,2,
\]
and hence for all $\alpha \in I \setminus \{\alpha_0\}$, the spectrum of $H_\alpha$ is purely absolutely continuous in $ J.$ 

\vspace{0.5em}
Finally, note that $F_1(\alpha_0,\lambda) \neq 0$ for $\lambda \in  J \setminus \{\lambda_0\}$. 
This implies that the operator $H_{\alpha_0}$  is absolutely continuous in $J\setminus \{\lambda_0\}$. This completes the proof of part (a).

To prove part (b), note that if \(F_{2}(\alpha,\lambda_{j}(\alpha))=0\),  
then \(F(\alpha,\lambda_{j}(\alpha))=0\) and consequently 
\(\operatorname{Rank}(B(\alpha,\lambda_{j}(\alpha)+\iota0))=1\). Now the statement  follows from Theorem~\ref{thm-eigenvalue}.

\end{proof}


\begin{theorem}[\textbf{Threshold case}]\label{Threshold}
Assume that the hypotheses of Lemma~$\ref{resonance}$ hold and assume $\lambda_0=0.$
 Then the following
statements hold:
\begin{enumerate}[label=(\alph*)]
\item For $\alpha<\alpha_0$, $\lambda_1(\alpha)$ and $\lambda_2(\alpha)$ are discrete simple eigenvalues of $H_\alpha.$
    \item For $\alpha>\alpha_0$, $\lambda_1(\alpha)$ and $\lambda_2(\alpha)$ are both positive. For $j=1,2$  if $F_{2}(\alpha,\lambda_{j}(\alpha))\neq 0$ for all $\alpha>\alpha_0$ then the spectrum of $H_\alpha$ in $J\cap [0,\infty)$ is
    absolutely continuous. Moreover, the spectrum of $H_{\alpha_0}$ in
    $J\cap (0,\infty)$ is absolutely continuous.
 \item If there exists $\alpha>\alpha_0$ such
    that $F_{2}(\alpha,\lambda_j(\alpha))=0$ for some $j\in\{1,2\}$ , then $\lambda_j(\alpha)$ is a simple
    eigenvalue of $H_\alpha$ with corresponding eigenvector $\Phi_{v_j,\lambda_j(\alpha)}$ where $v_j=u_2(\lambda_j(\alpha))u_1-u_1(\lambda_j(\alpha))u_2$.
\end{enumerate}
\end{theorem}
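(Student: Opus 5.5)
We use the paths $\lambda_j$ from Lemma~\ref{resonance}, which are defined on $I$ with $\lambda_j(\alpha_0)=0$ and $\lambda_j'(\alpha_0)>0$ by the Remark following that lemma. Shrinking $I$ if necessary, each $\lambda_j$ is strictly increasing on $I$. Hence $\lambda_j(\alpha)<0$ for $\alpha<\alpha_0$ and $\lambda_j(\alpha)>0$ for $\alpha>\alpha_0$. This gives the positivity claim in (b) at once. The difficulty is that $F_1$ and $F_2$ are defined through $\gamma(\eta_{jk},\lambda)$ and $\eta_{jk}(\lambda)$ for all real $\lambda$, so we must check what the equation $F_1=0$ means on each side of the threshold.

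\textbf{Part (a).} For $\lambda<0$ we have $\eta_{jk}(\lambda)=0$, so $D(\alpha,\lambda)=0$ and $F_2(\alpha,\lambda)=0$. Moreover $\gamma(\eta_{jk},\lambda)=\int_0^\infty \eta_{jk}(x)(x-\lambda)^{-1}\,\diff x=\langle R_0(\lambda)u_k,u_j\rangle$, where the resolvent is now genuinely defined. Thus $B(\alpha,\lambda+\iota0)=C(\alpha,\lambda)=I+\alpha\tau R_0(\lambda)\tau^*$ and $F(\alpha,\lambda_j(\alpha))=F_1(\alpha,\lambda_j(\alpha))=0$. By the Birman--Schwinger principle quoted before Theorem~\ref{thm-eigenvalue}, $\lambda_j(\alpha)<0$ is a discrete eigenvalue of $H_\alpha$.

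For simplicity we need $\operatorname{rank} B(\alpha,\lambda_j(\alpha)+\iota0)=1$. If $B$ vanished, then $\partial_\lambda F_1$ would vanish at $(\alpha,\lambda_j(\alpha))$, because $\partial_\lambda F_1$ is a sum of products of entries of $B$ and their derivatives. The point $(\alpha,\lambda_j(\alpha))$ would then be a critical point of $F_1$ in addition to $(\alpha_0,0)$. This is impossible: by the Morse lemma (Corollary~\ref{Cor-Morse}), $(\alpha_0,0)$ is the only critical point of $F_1$ in $I\times J$ once these are shrunk. The multiplicity of the eigenvalue equals $\dim\operatorname{Ker}B$ by the Birman--Schwinger isomorphism, so it equals one. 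The argument also needs $\lambda_1(\alpha)\neq\lambda_2(\alpha)$ for $\alpha\neq\alpha_0$. This holds because the two Morse branches intersect only at the critical point, and it gives two distinct simple eigenvalues.

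\textbf{Parts (b) and (c).} For $\alpha>\alpha_0$ both paths lie in $J\cap(0,\infty)$, and we repeat the proof of Theorem~\ref{Thm-absolutecontinuity} there. By the Morse structure, $F_1(\alpha,\lambda)\neq0$ for $\lambda\in J\setminus\{\lambda_1(\alpha),\lambda_2(\alpha)\}$. Together with the hypothesis $F_2(\alpha,\lambda_j(\alpha))\neq0$, this gives $F(\alpha,\lambda)\neq0$ on $J\cap(0,\infty)$. Lemma~\ref{newlemma} and \eqref{eqn-laplacianresolventlimit} then show that the boundary values $\langle R_\alpha(\lambda+\iota0)v,w\rangle$ exist for $v,w\in\mathcal E$. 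Corollary~3.28 of \cite{Teschl}, applied with the density of $\mathcal E$, yields absolute continuity on $J\cap(0,\infty)$.

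The single point $\lambda=0$ has spectral measure zero unless $0$ is an eigenvalue. At $\lambda=0$ we have $\eta_{jk}(0)=0$, so $F(\alpha,0)=F_1(\alpha,0)$, which is nonzero for $\alpha\neq\alpha_0$ because $0\neq\lambda_j(\alpha)$. Theorem~\ref{thm-eigenvalue} (case $\mu=0$) therefore excludes a threshold eigenvalue. This gives absolute continuity on $J\cap[0,\infty)$. For $H_{\alpha_0}$, $F_1(\alpha_0,\lambda)\neq0$ for $\lambda\in J\setminus\{0\}$, and the same argument applies on $J\cap(0,\infty)$.

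Part (c) is identical to Theorem~\ref{Thm-absolutecontinuity}(b). If $F_2(\alpha,\lambda_j(\alpha))=0$, then $F(\alpha,\lambda_j(\alpha))=0$ and the rank of $B$ is one, by the critical-point argument above. Theorem~\ref{thm-eigenvalue} with $\mu=\lambda_j(\alpha)>0$ gives a simple eigenvalue. The kernel of $B$ is spanned by a vector proportional to $(u_{2,\lambda},-u_{1,\lambda})$-type coefficients, which produces the stated eigenvector $\Phi_{v_j,\lambda_j(\alpha)}$.

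\textbf{Main obstacle.} We expect the main difficulty to be the rigorous use of the Morse lemma in parts (a) and (c). It must supply three facts on $I\times J$: the zero set of $F_1$ is exactly the union of the two graphs; these graphs meet only at $(\alpha_0,0)$; and $\nabla F_1\neq0$ away from that point. These facts are what make each eigenvalue simple and the two eigenvalues in (a) distinct. A secondary check is that $F_1$ remains smooth across $\lambda=0$. This holds because $\eta_{jk}\in\mathcal S(\RR)$ by rapid decay at $0$, and Proposition~\ref{CPV-properties} then gives smoothness of $\gamma(\eta_{jk},\cdot)$.
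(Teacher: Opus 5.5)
Your proposal is correct and follows the paper's own proof. Part (a) uses $\lambda_j'(\alpha_0)>0$ to place the branches on either side of the threshold, the vanishing of $F_2$ for $\lambda<0$, and the Birman--Schwinger principle; parts (b) and (c) repeat the argument of Theorem~\ref{Thm-absolutecontinuity}. Your additional steps fill in details that the paper leaves implicit, and they are sound: simplicity via isolation of the nondegenerate critical point, distinctness of the two Morse branches, and ruling out an eigenvalue at $\lambda=0$ for $\alpha>\alpha_0$ using $F(\alpha,0)=F_1(\alpha,0)\neq0$.
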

\begin{proof}
     Since
$\lambda_j'(\alpha_0)>0$,
we have $\lambda_j(\alpha)<0$ for $j=1,2$ and all $\alpha\in I$. Moreover, since
$F_2(\alpha,\lambda)=0$ for $\lambda<0$, it follows from the Birman--Schwinger
principle that $\lambda_j(\alpha),\,j=1,2$ are simple discrete eigenvalues of $H_\alpha$ for all
$\alpha<\alpha_0$ with $\alpha\in I$ which proves (a).  To prove (b), note that $\lambda_j(\alpha)>0$ for $j=1,2$ and all
$\alpha>\alpha_0$ with $\alpha\in I$. Then the argument proceeds in the same way as in Theorem~\ref{Thm-absolutecontinuity} (a). The proof of (c) is similar to that of Theorem~\ref{Thm-absolutecontinuity} (b).
\end{proof}

\section{The model}\label{5}

Based on the results in the previous section, in particular Corollary~\ref{Cor-Modeleigenvalue}, 
Lemma~\ref{resonance} and Theorem~\ref{Thm-absolutecontinuity},  we describe the model (see the definition below) \[H_\alpha=H_0+\alpha V\quad\text{ on }L^2(\RR^3)\]such that $\lambda_0$ is a multiplicity two embedded eigenvalue of $H_{\alpha_0}$ with eigenspace spanned by $\{\phi_1,\phi_2\}$ where $\phi_1=\Phi_{u_1,\lambda_0},\phi_2=\Phi_{u_2,\lambda_0}$. For $\alpha\neq\alpha_0$, $H_\alpha$ is purely absolutely continuous in an interval around $\lambda_0.$ In the forthcoming sections, we study the resonance phenomenon near the embedded eigenvalue $\lambda_0$ by analyzing the asymptotic behaviour of spectral density, scattering amplitude and time delay as $\alpha\to\alpha_0.$  
\begin{definition}[The model]\label{model2} Let $\alpha_0\in\RR$ and $\lambda_0>0$ be fixed. For $\alpha\in\RR$, consider the operator \begin{equation}\label{model22}
H_\alpha=H_0+\alpha V\quad\text{ on }L^2(\RR^3)
\end{equation} and $u_j,\,j=1,2$ satisfy the following conditions:\begin{enumerate}[label=(\alph*)]
    \item $u_1,u_2\in\mathcal E$ and $u_1\perp u_2$.
    \item $u_{1,\lambda_0}=u_{2,\lambda_0}=0,\quad\delta_{jk}+\alpha_0\gamma(\eta_{jk},\lambda_0)=0$ $($equivalently $B(\alpha_0,\lambda_0+\iota0)=0$$)$.
    \item $\langle \phi_1, \phi_2 \rangle \neq 0$ or $\|\phi_1\| \neq \|\phi_2\|$ $($equivalently the determinant $\det \left(D^2F_1(\alpha_0,\lambda_0)\right)<0$$)$.
    \item The function $F_2(\alpha,\lambda_j(\alpha))\neq 0$ for all $\alpha\in I$ and for $j=1,2$ where $I$ and $\lambda_j(\alpha)$ are as obtained in Lemma $\ref{resonance}$.
\end{enumerate}
Without loss of generality, we assume that $||\phi_1||\leq ||\phi_2||$ which can be obtained otherwise by swapping $u_1$ and $u_2.$
\end{definition}

The operator \(H_{\alpha_{0}}\) has an embedded eigenvalue \(\lambda_{0}\) with eigenspace  \(\operatorname{span}\{\phi_{1},\phi_{2}\}\).  
To analyse  the resonance phenomenon for  $H_\alpha$ near the embedded eigenvalue \(\lambda_{0}=\lambda(\alpha_0),\)  along the paths 
\(\lambda_{1}(\alpha)\) and \(\lambda_{2}(\alpha)\), we will choose a suitable
orthonormal basis \(\{\psi_{1},\psi_{2}\}\) of this eigenspace. 

The next lemma gives bounds for $\lambda_j'(\alpha_0)$. Its proof is given in the appendix.\begin{lemma}\label{5.2} We have
      \begin{equation}
    \lambda_1'(\alpha_0)\geq\frac{1}{\alpha_0^2||\phi_j||^2},\ \lambda_2'(\alpha_0)\leq\frac{1}{\alpha_0^2||\phi_j||^2} \quad\text{for }j=1,2. \end{equation}
    and \begin{equation}
        \label{iff2}\lambda_1'(\alpha_0)=\frac{1}{\alpha_0^2||\phi_1||^2},\  \lambda_2'(\alpha_0)=\frac{1}{\alpha_0^2||\phi_2||^2}\iff \phi_1\perp\phi_2 
    \end{equation}where $\lambda_j(\alpha)$'s are as obtained in Lemma $\ref{resonance}$.
\end{lemma}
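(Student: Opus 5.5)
Write $a=\|\phi_1\|^2$, $b=\|\phi_2\|^2$, $c=|\langle\phi_1,\phi_2\rangle|^2$. By \eqref{eqn-Hessian} we have $d=\sqrt{(a-b)^2+4c}$, and by Cauchy--Schwarz together with the linear independence of $\phi_1,\phi_2$ we have $ab-c>0$. Formula \eqref{eqn-derivativelambda} then reads
\[
\lambda_j'(\alpha_0)=\frac{a+b+(-1)^{j+1}d}{2\alpha_0^2(ab-c)}.
\]
The plan is to recognize $\alpha_0^2\lambda_j'(\alpha_0)$ as the reciprocals of the eigenvalues of the Gram matrix
\[
G=\begin{bmatrix} a & \langle\phi_2,\phi_1\rangle\\ \langle\phi_1,\phi_2\rangle & b\end{bmatrix}.
\]
Indeed, the eigenvalues of $G$ are $\mu_\pm=\tfrac12(a+b\pm d)$, with $\mu_+\mu_-=ab-c$. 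Hence
\[
\frac{1}{\mu_\mp}=\frac{\mu_\pm}{ab-c}=\frac{a+b\pm d}{2(ab-c)},
\]
which gives $\lambda_1'(\alpha_0)=\frac{1}{\alpha_0^2\mu_-}$ and $\lambda_2'(\alpha_0)=\frac{1}{\alpha_0^2\mu_+}$.

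The inequalities then follow from the min--max principle for the Hermitian matrix $G$. Since its diagonal entries are $\langle Ge_j,e_j\rangle$, we get $\mu_-\le a,b\le\mu_+$. Concretely, $\mu_+-a=\tfrac12(d-(a-b))\ge0$ because $d\ge|a-b|$, and the other three inequalities are checked the same way. Inverting these (all quantities are positive, since $\mu_->0$) yields $\lambda_1'(\alpha_0)\ge\frac{1}{\alpha_0^2\|\phi_j\|^2}$ and $\lambda_2'(\alpha_0)\le\frac{1}{\alpha_0^2\|\phi_j\|^2}$ for $j=1,2$.

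For the equivalence \eqref{iff2}, suppose first that $\phi_1\perp\phi_2$. Then $c=0$ and $d=|a-b|=b-a$, using the standing convention $a\le b$ from Definition~\ref{model2}. So $\mu_-=a$ and $\mu_+=b$, which is exactly \eqref{iff2}. Conversely, $\lambda_1'(\alpha_0)=\frac{1}{\alpha_0^2a}$ forces $\mu_-=a$, i.e. $a+b-d=2a$, so $d=b-a$. Squaring gives $(a-b)^2+4c=(b-a)^2$, hence $c=0$, i.e. $\phi_1\perp\phi_2$.

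The only delicate point is bookkeeping: matching the sign $(-1)^{j+1}$ in \eqref{eqn-derivativelambda} to $\mu_\mp$, and using $a\le b$ so that $|a-b|=b-a$ in the equality case. Otherwise the argument is elementary algebra.
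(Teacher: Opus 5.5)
Your proof is correct, but it takes a different route from the paper's.

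\textbf{The paper's route.} The paper works by direct algebra. It clears denominators in $\lambda_2'(\alpha_0)\le 1/(\alpha_0^2\|\phi_2\|^2)$, squares, and reduces the inequality to $|\langle\phi_1,\phi_2\rangle|^2\bigl(|\langle\phi_1,\phi_2\rangle|^2-\|\phi_1\|^2\|\phi_2\|^2\bigr)\le 0$, i.e.\ to Cauchy--Schwarz. The bound with $\|\phi_1\|$ then follows from $\|\phi_1\|\le\|\phi_2\|$. The bounds for $\lambda_1'(\alpha_0)$ are called trivial. The equality case is read off from the same chain with ``$\le$'' replaced by ``$=$''.

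\textbf{Your route.} You identify $\alpha_0^2\lambda_1'(\alpha_0)=1/\mu_-$ and $\alpha_0^2\lambda_2'(\alpha_0)=1/\mu_+$ for the Gram matrix $G$. All four inequalities then collapse to the fact that the diagonal entries of a Hermitian matrix lie between its extreme eigenvalues. Only $c\ge 0$ is used there; strict Cauchy--Schwarz enters only to get $\mu_->0$. The equality case becomes the fact that a $2\times2$ Hermitian matrix has an eigenvalue on its diagonal only if it is diagonal.

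\textbf{What your route buys.}
\begin{itemize}
\item It gives a conceptual explanation and treats $\lambda_1'$ and $\lambda_2'$ symmetrically.
\item It has no squaring step. In the paper, reversing the squaring in the direction $\phi_1\perp\phi_2\Rightarrow$ equality tacitly needs $\|\phi_2\|^2(\|\phi_2\|^2-\|\phi_1\|^2)+2|\langle\phi_1,\phi_2\rangle|^2\ge 0$, i.e.\ the convention $\|\phi_1\|\le\|\phi_2\|$. You use that convention explicitly, and only where it is genuinely needed to pair $\lambda_1'$ with $\|\phi_1\|$.
\item It also explains the later construction \eqref{psi1psi2}. The coefficient vectors $(a_{j1},a_{j2})$ are eigenvectors of $G$ for $\mu_-$ and $\mu_+$ respectively, which is what lies behind Lemma~\ref{Lem-aijrelation}.
\end{itemize}

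\textbf{What the paper's route buys.} It records slightly more in the equality case: each of the two equalities separately is equivalent to $\phi_1\perp\phi_2$. Your converse uses only the $\lambda_1'$ equality. The analogous check $\mu_+=b\Rightarrow d=b-a\Rightarrow c=0$ gives the other one just as quickly.
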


\paragraph{\textbf{Construction of the  canonical eigenbasis at $\lambda_0$:}}
Define
\begin{equation}\label{psi1psi2}
    \psi_1 = a_{11}\phi_1 + a_{12}\phi_2,
    \quad \text{and} \quad
    \psi_2 = a_{21}\phi_1 + a_{22}\phi_2,
\end{equation}
where
\begin{align*}
a_{11} &= \sqrt{\frac{-1 + \alpha_0^2 \|\phi_2\|_2^2 \lambda_1'(\alpha_0)}{d}}, 
&\quad
a_{12} &= -q(\langle \phi_1, \phi_2 \rangle)
          \sqrt{\frac{-1 + \alpha_0^2 \|\phi_1\|_2^2 \lambda_1'(\alpha_0)}{d}}, \\[4pt]
a_{21} &= \sqrt{\frac{1 - \alpha_0^2 \|\phi_2\|_2^2 \lambda_2'(\alpha_0)}{d}}, 
&\quad
a_{22} &= q(\langle \phi_1, \phi_2 \rangle)
          \sqrt{\frac{1 - \alpha_0^2 \|\phi_1\|_2^2 \lambda_2'(\alpha_0)}{d}}
\end{align*}
and $q(z)=z/|z|$ denotes the phase factor of $z,$ defined to be $1$ when $z=0.$ For notational simplicity, we denote $(\phi_j)_\lambda$ by $\phi_{j,\lambda}$ and $(\psi_j)_\lambda$ by $\psi_{j,\lambda}$ for $j=1,2.$

In the next lemma, we derive several relations among the coefficients $a_{jk}$
and establish that $\{\psi_1,\psi_2\}$ is an orthonormal set. The proof is deferred
to the appendix.

\blem\label{Lem-aijrelation}
\begin{enumerate}[label=(\alph*)]
\item For $j = 1, 2$, \[a_{j1}a_{j2} = \dfrac{(-1)^j}{d}\,\alpha_0^2\langle \phi_1, \phi_2 \rangle \lambda_j'(\alpha_0).\]
\item For $j,k\in\{1,2\}$, \[\overline{a_{j1}}a_{k1} + \overline{a_{j2}}a_{k2} =\delta_{jk} \alpha_0^2 \lambda_j'(\alpha_0).\]
\item $\|\psi_1\| = \|\psi_2\| = 1$ and $\psi_1 \perp \psi_2.$
\end{enumerate}
\elem

\section{Spectral density and its asymptotic behaviour}\label{6} In our model, since \( H_\alpha \) is purely absolutely continuous in $J$ for \( \alpha\in I\setminus\{ \alpha_0\} \), by Stone's formula \eqref{stone}, the spectral density $ \rho^{v,w}(\alpha,\cdot)$ associated with the measure $\langle E_\alpha(\diff\lambda)v,w\rangle$ for $v,w\in L^2(\RR^3)$ is given by \begin{equation}\label{spectraldensity}\begin{split}
    \rho^{v,w}(\alpha,\lambda)&=\frac{\diff}{\diff\lambda}\langle E_\alpha(\diff\lambda)v,w\rangle\\&=\frac{1}{2\pi\iota}\left(\langle R_\alpha(\lambda+\iota0)v,w\rangle-\langle R_\alpha(\lambda-\iota0)v,w\rangle\right)\end{split}\end{equation} for almost every $\lambda\in J.$ For notational simplicity, we denote $\rho^{v,v}$ by $\rho^v.$ In what follows, for $v,w\in\mathcal E$ we study the asymptotic behaviour of spectral densities \(\rho^{v,w}(\alpha,\cdot)\) in terms of the Cauchy distribution near the embedded eigenvalue
\(\lambda_0\) as \(\alpha \to \alpha_0\)
which leads to spectral concentration near \(\lambda_0\) and yields the
corresponding time-decay behaviour.

\noindent A direct application of Lemma \ref{derivative} yields\begin{equation}\label{eqn-cijderivative}\frac{\diff}{\diff\alpha}\left(c_{jk}(\alpha,\lambda_l(\alpha))\right)\big|_{\alpha=\alpha_0}=-\frac{\delta_{jk}}{\alpha_0}+\alpha_0\langle\phi_k,\phi_j\rangle\lambda_l'(\alpha_0),\quad j,k,l\in\{1,2\}.\end{equation}
For $\alpha\in J\setminus\{\alpha_0\}$ and $l=1,2$, define \begin{equation}\label{eqn-kappa1}\kappa_l(\alpha):=\frac{F_2(\alpha,\lambda_l(\alpha))}{\alpha  \tilde{c}_{ll}(\alpha,\lambda_l(\alpha))||\phi_l||^2} .\end{equation} By \eqref{iff2} and \eqref{eqn-cijderivative}, for $l=1,2$, we have  
\begin{equation}\label{eqn-cll}
\frac{\diff}{\diff\alpha}\big(\tilde{c}_{ll}(\alpha,\lambda_l(\alpha))\big)\big|_{\alpha=\alpha_0}\neq0.
\end{equation}
It then follows that $\kappa_l(\alpha)=O(|\alpha-\alpha_0|^2)$ as $\alpha\to\alpha_0$ for $l=1,2$. 
Moreover, if $u_{j,\lambda}$ vanish to order $n$ at $\lambda=\lambda_0$ for $j=1,2$, then 
\begin{equation}\label{kappaorder}
\kappa_l(\alpha)=O(|\alpha-\alpha_0|^{2n}), \qquad l=1,2.
\end{equation}
For $\alpha\in J$, $l=1,2$, we set $\lambda_l^h(\alpha)=\lambda_l(\alpha)+h\,\kappa_l(\alpha).$\par
Before proving the spectral density asymptotics, we establish the following lemma, which will be used
in the proof and in later part.
\blem\label{Lem-limits}For any fixed $h\in\RR$ and $j,k,l\in\{1,2\}$, we have\begin{enumerate}[label=(\alph*)]\item  \[\lim\limits_{\alpha\to\alpha_0}\frac{\tilde{c}_{jk}(\alpha,\lambda^h_l(\alpha))}{\tilde{c}_{ll}(\alpha,\lambda^h_l(\alpha))}=\frac{\overline{a_{lj}}a_{lk}}{|a_{ll}|^2}.\]
    \item \begin{equation}\label{eqn-F2limit}
        \lim\limits_{\alpha\to\alpha_0}\dfrac{F_2(\alpha,\lambda_l^h(\alpha))}{\tilde{c}_{ll}(\alpha,\lambda_l^h(\alpha))\kappa_l(\alpha)}=\alpha_0||\phi_l||^2.
    \end{equation}\item \begin{equation}\label{eqn-F1limit}
        \lim\limits_{\alpha\to\alpha_0}\dfrac{F_1(\alpha,\lambda_l^h(\alpha))}{\tilde{c}_{ll}(\alpha,\lambda_l^h(\alpha))\kappa_l(\alpha)}=h\frac{\alpha_0}{|a_{ll}|^2}.
    \end{equation} 
\end{enumerate}Furthermore, all the above  limits are uniform with respect to $h$  in compact sets.\elem\begin{proof} The proof of (a) follows by \eqref{eqn-cijderivative} and mean value theorem. To prove (b), first note that \begin{align*}
   \left |\frac{\tilde{c}_{ll}(\alpha,\lambda_l^h(\alpha))}{\tilde{c}_{ll}(\alpha,\lambda_l(\alpha))}-1\right|&\leq\left|\frac{1}{\tilde{c}_{ll}(\alpha,\lambda_l(\alpha))}\int_{\lambda_l(\alpha)}^{\lambda^h_l(\alpha)}\tilde{c}_{ll}'(\alpha,\lambda)\diff\lambda\right|\\&\leq|h|\frac{\kappa_l(\alpha)\sup_{\alpha\in I,\lambda\in\RR}|\tilde{c}_{ll}'(\alpha,\lambda)|}{|\tilde{c}_{ll}(\alpha,\lambda_l(\alpha))|}.
\end{align*}Using \eqref{eqn-cll} and the fact that $\kappa_l(\alpha) = O(|\alpha - \alpha_0|^2)$, we have \begin{equation}\label{eq1}\lim_{\alpha\to\alpha_0}\dfrac{\tilde{c}_{ll}(\alpha,\lambda_l^h(\alpha))}{\tilde{c}_{ll}(\alpha,\lambda_l(\alpha))}=1\qquad \text{uniformly for }h\text{ in compact sets} .\end{equation} By \eqref{eqn-kappa1}, we have
\begin{equation}\label{eq2}\frac{F_2(\alpha,\lambda_l^h(\alpha))}{\tilde{c}_{ll}(\alpha,\lambda_l(\alpha))\kappa_l(\alpha)}=\frac{F_2(\alpha,\lambda_l^h(\alpha))-F_2(\alpha,\lambda_l(\alpha))}{\tilde{c}_{ll}(\alpha,\lambda_l(\alpha))\kappa_l(\alpha)}+\alpha||\phi_l||^2.\end{equation}Now, 
\begin{equation}\label{estimates}\begin{split}
    &\left|F_2(\alpha,\lambda_l^h(\alpha))-F_2(\alpha,\lambda_l(\alpha))-h\kappa_l(\alpha)\frac{\partial F_2}{\partial\lambda}(\alpha,\lambda_l(\alpha))\right|\\&\quad=\left|\int_{\lambda_l(\alpha)}^{\lambda_l^h(\alpha)}\left(\frac{\partial F_2}{\partial\lambda}(\alpha,\lambda)-\frac{\partial F_2}{\partial\lambda}(\alpha,\lambda_l(\alpha))\right)\diff\lambda\right|\leq h^2\kappa_l(\alpha)^2\left|\left|\frac{\partial^2 F_2}{\partial\lambda^2}\right|\right|_\infty.
\end{split}\end{equation}Since $\kappa_l(\alpha)=O(|\alpha-\alpha_0|^2)$ and  $\frac{\partial F_2}{\partial\lambda}(\alpha,\lambda_l(\alpha))=O(|\alpha-\alpha_0|^2)$ as $\alpha$ goes to $\alpha_0$, it follows that \begin{equation}\label{eq3}\lim_{\alpha\to\alpha_0}\frac{F_2(\alpha,\lambda_l^h(\alpha))-F_2(\alpha,\lambda_l(\alpha))}{\tilde{c}_{ll}(\alpha,\lambda_l(\alpha))\kappa_l(\alpha)}=\lim_{\alpha\to\alpha_0}\frac{h\frac{\partial F_2}{\partial\lambda}(\alpha,\lambda_l(\alpha))}{\tilde{c}_{ll}(\alpha,\lambda_l(\alpha)) }=0.\end{equation}

Combining \eqref{eq1},\eqref{eq2} and \eqref{eq3}, we get \eqref{eqn-F2limit}. By \eqref{estimates} and \eqref{eq1}, it follows that the limit in \eqref{eqn-F2limit} is uniform for $h$  in compact sets. Since $F_1(\alpha,\lambda_l(\alpha))=0,$ by similar estimates as in \eqref{estimates}, we get
\begin{equation}\label{221}\lim_{\alpha\to\alpha_0}\frac{F_1(\alpha,\lambda_l^h(\alpha))}{\tilde{c}_{ll}(\alpha,\lambda_l(\alpha))\kappa_l(\alpha)}=\lim_{\alpha\to\alpha_0}\frac{F_1(\alpha,\lambda_l^h(\alpha))-F_1(\alpha,\lambda_l(\alpha))}{\tilde{c}_{ll}(\alpha,\lambda_l(\alpha))\kappa_l(\alpha)}=\lim_{\alpha\to\alpha_0}\frac{h\frac{\partial F_1}{\partial\lambda}(\alpha,\lambda_l(\alpha))}{\tilde{c}_{ll}(\alpha,\lambda_l(\alpha))}\end{equation}uniformly for $h$ in compact sets. 
Now, using part (a),  we get
\begin{align*}\lim_{\alpha\to\alpha_0}\dfrac{\frac{\partial F_1}{\partial\lambda}(\alpha,\lambda_l(\alpha))}{\tilde{c}_{ll}(\alpha,\lambda_l(\alpha))}&=\lim_{\alpha\to\alpha_0}\frac{c_{11}(\alpha,\lambda_l(\alpha))}{\tilde{c}_{ll}(\alpha,\lambda_l(\alpha))}\frac{\partial c_{22}}{\partial\lambda}(\alpha_0,\lambda_0)+\lim_{\alpha\to\alpha_0}\frac{c_{22}(\alpha,\lambda_l(\alpha))}{\tilde{c}_{ll}(\alpha,\lambda_l(\alpha))}\frac{\partial c_{11}}{\partial\lambda}(\alpha_0,\lambda_0)\\&\qquad-2\Re\left[\lim_{\alpha\to\alpha_0}\frac{c_{21}(\alpha,\lambda_l(\alpha))}{\tilde{c}_{ll}(\alpha,\lambda_l(\alpha))}\frac{\partial \overline{c_{21}}}{\partial\lambda}(\alpha_0,\lambda_0)\right]\\&=\alpha_0 ||\phi_2||^2\frac{|a_{l2}|^2}{|a_{ll}|^2}+\alpha_0 ||\phi_1||^2\frac{|a_{l1}|^2}{|a_{ll}|^2}+2\alpha_0\Re\left[\overline{\langle\phi_1,\phi_2\rangle}\frac{\overline{a_{l1}}a_{l2}}{|a_{ll}|^2}\right]\\&=\alpha_0\frac{|a_{l1}|^2||\phi_1||^2+|a_{l2}|^2||\phi_2||^2+2\Re[\overline{\langle\phi_1,\phi_2\rangle }\overline{a_{l1}}a_{l2}]}{|a_{ll}|^2}.\end{align*} Since \[1=||\psi_l||^2=|a_{l1}|^2||\phi_1||^2+|a_{l2}|^2||\phi_2||^2+2\Re[\overline{\langle\phi_1,\phi_2\rangle }\overline{a_{l1}}a_{l2}],\]we obtain\begin{equation}\label{222}
 \lim_{\alpha\to\alpha_0}\dfrac{\frac{\partial F_1}{\partial\lambda}(\alpha,\lambda_l(\alpha))}{\tilde{c}_{ll}(\alpha,\lambda_l(\alpha))}   =\dfrac{\alpha_0}{|a_{ll}|^2}.
\end{equation}
Then $(c)$ follows by substituting \eqref{222} into \eqref{221}.
\end{proof}
The following theorem describes the asymptotic behaviour of the spectral density of 
\(H_{\alpha}\) near \(\alpha_{0},\) after suitable translation
and scaling of “energy” $\lambda,$ in terms of the Cauchy distribution.  
It is one of the principal results of this paper and as consequences of this, we obtain results on spectral concentration and time decay in Section \ref{7}.

\bthm \label{psidensityasymThm} For any fixed $h\in\RR$ and $v,w\in\mathcal E$, we have \begin{equation}\label{uidensityasy}\lim_{\alpha\to\alpha_0}\kappa_l(\alpha)\rho^{v,w}(\alpha,\lambda_l^h(\alpha))=\frac{\langle P_{\psi_l}v,w\rangle}{\pi}\frac{||a_{ll}\phi_l||^2}{h^2+||a_{ll}\phi_l||^4} \end{equation} where $P_{\psi_l}$ denotes the orthogonal projection onto the subspace spanned by $\psi_l$. In particular, for the eigenvectors $\psi_1$ and $\psi_2$ of $H_{\alpha_0}$, we have
      \begin{equation}\label{psidensityasym}\lim_{\alpha\to\alpha_0}\kappa_l(\alpha)\rho^{\psi_j,\psi_k}(\alpha,\lambda_l^h(\alpha))=\frac{\delta_{lj}\delta_{lk}}{\pi}\frac{||a_{ll}\phi_l||^2}{h^2+||a_{ll}\phi_l||^4} .\end{equation}Furthermore, all the limits above are uniform with respect to $h$ in compact sets.
     \ethm
\begin{proof}By Lemma \ref{newlemma}, for any $\lambda\in J$ and $\alpha\in I\setminus\{\alpha_0\}$, we have
 \begin{equation}\label{ch6-R4}\begin{split}
       &\kappa_l(\alpha)\langle  R_\alpha(\lambda_l^h(\alpha)+\iota0)v,w\rangle\\ &=\kappa_l(\alpha)\langle R_0(\lambda_l^h(\alpha)+\iota0)v,w\rangle\\&\ \ -\alpha\frac{\kappa_l(\alpha)\tilde{c}_{ll}(\alpha,\lambda_l^h(\alpha))}{F(\alpha,\lambda_l^h(\alpha))}\sum_{j,k=1}^2\frac{\tilde{b}_{jk}(\alpha,\lambda_l^h(\alpha)+\iota0)}{\tilde{c}_{ll}(\alpha,\lambda_l^h(\alpha))}\langle R_0(\lambda_l^h(\alpha)+\iota0)v,u_j\rangle \langle R_0(\lambda_l^h(\alpha)+\iota0)u_k,w\rangle.\end{split}
    \end{equation}
Since $u_{j,\lambda_0}=0$ for any $j$, by \eqref{eqn-laplacianresolventlimit}, it follows that \begin{equation}\label{ch6-R5}
  \lim_{\alpha\to\alpha_0} \langle R_0(\lambda_l^h(\alpha)+\iota0)v,u_j\rangle=\gamma(\eta_{v,u_j},\lambda_0)=\langle v,\phi_j \rangle 
\end{equation} and \begin{equation}\label{ch6-R6}
    \lim_{\alpha\to\alpha_0} \langle R_0(\lambda_l^h(\alpha)+\iota0)u_k,w\rangle=\gamma(\eta_{u_k,w},\lambda_0)=\langle\phi_k,w\rangle.
\end{equation}
   Applying Lemma \ref{Lem-limits} (b) and (c), we also obtain \begin{equation}\label{1-eqn2}
        \lim_{\alpha\to\alpha_0}\dfrac{F(\alpha,\lambda_l^h(\alpha))}{\tilde{c}_{ll}(\alpha,\lambda_l^h(\alpha))\kappa_l(\alpha)}=\frac{\alpha_0(h+\iota||a_{ll}\phi_l||^2)}{|a_{ll}|^2}.
    \end{equation} Furthermore, by Lemma \ref{Lem-limits} (a), it follows that \[\lim_{\alpha\to\alpha_0}\frac{\tilde{b}_{jk}(\alpha,\lambda_l^h(\alpha)+\iota0)}{\tilde{c}_{ll}(\alpha,\lambda_l^h(\alpha))}=\frac{\overline{a_{lj}}a_{lk}}{|a_{ll}|^2}.\]Taking the limit $\alpha\to\alpha_0$ in \eqref{ch6-R4} and substituting the above limit together with \eqref{ch6-R5}, \eqref{ch6-R6} and \eqref{1-eqn2}, we obtain
\begin{equation} \label{ch6-R7}  \begin{split} \lim_{\alpha\to\alpha_0}\kappa_l(\alpha)\langle  R_\alpha(\lambda_l^h(\alpha)+\iota0)v,w\rangle&=-\frac{1}{h+\iota||a_{ll}\phi_l||^2}\sum_{j,k=1}^2\overline{a_{lj}}a_{lk}\langle v,\phi_j \rangle \langle\phi_k,w\rangle\\&=-\frac{\langle v,\psi_l \rangle \langle\psi_l,w\rangle}{h+\iota}=-\frac{\langle P_{\psi_l}v,w \rangle }{h+\iota||a_{ll}\phi_l||^2}.\end{split}\end{equation}
    Similarly we have
    \begin{equation}\label{ch6-R8}\lim_{\alpha\to\alpha_0}\kappa(\alpha)\langle  R_\alpha(\lambda_l^h(\alpha)-\iota0)v,w\rangle=-\frac{\langle P_{\psi_l}v,w \rangle }{h-\iota||a_{ll}\phi_l||^2}.\end{equation}
  Finally, by using \eqref{spectraldensity}, \eqref{ch6-R7} and \eqref{ch6-R8},
  \eqref{uidensityasy} follows. By Lemma \ref{Lem-limits} and, the continuity of $\gamma(\eta_{v,u_j},\cdot)$ and $\gamma(\eta_{u_j,w},\cdot)$ for any $j$, all the limits above are uniform as stated.
\end{proof}

\section{Spectral concentration, time decay and sojourn time}\label{7}
  \subsection{Spectral Concentration}  Next we consider the phenomenon of spectral concentration of $H_\alpha$ near $\lambda_0$ as $\alpha\to\alpha_0.$ We modify the definition of spectral concentration given in~\cite{KatoBook} by
additionally requiring that the concentrating sets to converge to the eigenvalue under
consideration.

 \bdfn[Spectral Concentration] Let $\{T_\alpha\}_{\alpha\in\RR}$ be a family of self-adjoint operators with the associated spectral measures $\left\{E_\alpha\right\}_{\alpha\in\RR}$ and $\lambda_0$ be an eigenvalue of $T_{\alpha_0}.$ We say that the spectrum of $T_\alpha$ is concentrated near $\lambda_0$ as $\alpha\to\alpha_0$ if there exists a family of Borel sets $\{\mathscr{I}_\alpha\}_{\alpha\in\RR}$ such that both $\inf \mathscr{I}_\alpha,\,\sup\mathscr{I}_\alpha$ converge to $\lambda_0$ and the spectral projection $E_\alpha(\mathscr{I}_\alpha)\xrightarrow{} E_{\alpha_0}(\{\lambda_0\})$ strongly, as $\alpha\to\alpha_0.$\par In addition, for $p>0$ if $\lim\limits_{\alpha\to\alpha_0}\frac{| \mathscr{I}_\alpha|}{|\alpha-\alpha_0|^p}=0$, then we say that the spectrum of $\{T_\alpha\}$ is concentrated to order $p$ near $\lambda_0.$
 \edfn 
\begin{lemma}\label{lem-specconc}
For $l=1,2$, let $\{\mathscr{I}_{l,\alpha}\}$ be a family of open intervals symmetric
about $\lambda_l(\alpha)$ such that
\[
\lim_{\alpha\to\alpha_0}
\frac{|\mathscr{I}_{l,\alpha}|}{2\kappa_l(\alpha)}=\infty,
\qquad
|\mathscr{I}_{l,\alpha}|=o(|\alpha-\alpha_0|)
\quad \text{as } \alpha\to\alpha_0.
\]
Then
\[
\lim_{\alpha\to\alpha_0}
\langle E_\alpha(\mathscr{I}_{l,\alpha})\psi_j,\psi_k\rangle
=
\delta_{jk}\,\delta_{jl},
\qquad j,k,l\in\{1,2\}.
\]
\end{lemma}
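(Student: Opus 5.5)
Since $H_\alpha$ is purely absolutely continuous in $J$ for $\alpha\in I\setminus\{\alpha_0\}$, and $\mathscr{I}_{l,\alpha}\subset J$ for $\alpha$ close to $\alpha_0$ (because $|\mathscr I_{l,\alpha}|\to0$ and $\lambda_l(\alpha)\to\lambda_0$), we can write
\[
\langle E_\alpha(\mathscr I_{l,\alpha})\psi_j,\psi_k\rangle=\int_{\mathscr I_{l,\alpha}}\rho^{\psi_j,\psi_k}(\alpha,\lambda)\diff\lambda .
\]
We substitute $\lambda=\lambda_l^h(\alpha)=\lambda_l(\alpha)+h\kappa_l(\alpha)$. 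Writing $|\mathscr I_{l,\alpha}|=2\kappa_l(\alpha)M_\alpha$ with $M_\alpha\to\infty$, this becomes
\[
\int_{-M_\alpha}^{M_\alpha}\kappa_l(\alpha)\rho^{\psi_j,\psi_k}(\alpha,\lambda_l^h(\alpha))\diff h .
\]
Here we take $\kappa_l(\alpha)>0$; otherwise we replace $\kappa_l$ by $|\kappa_l|$, which only flips the orientation of $h$. Theorem~\ref{psidensityasymThm} gives convergence of the integrand, uniformly for $h$ in compact sets, to the Cauchy density
\[
\frac{\delta_{lj}\delta_{lk}}{\pi}\,\frac{\beta_l}{h^2+\beta_l^2},\qquad \beta_l=\|a_{ll}\phi_l\|^2 .
\]
This density has total integral $\delta_{lj}\delta_{lk}$. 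The problem is therefore to exchange limit and integral over the growing domain $[-M_\alpha,M_\alpha]$.

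We first reduce to the diagonal case using positivity. For $\alpha$ close to $\alpha_0$, $E_\alpha(\mathscr I_{l,\alpha})$ is a projection, so
\[
|\langle E_\alpha(\mathscr I)\psi_j,\psi_k\rangle|^2\le \langle E_\alpha(\mathscr I)\psi_j,\psi_j\rangle\langle E_\alpha(\mathscr I)\psi_k,\psi_k\rangle .
\]
It therefore suffices to prove $\langle E_\alpha(\mathscr I_{l,\alpha})\psi_j,\psi_j\rangle\to\delta_{jl}$ for $j=1,2$. Once that holds, the off-diagonal terms vanish: each involves a $j\neq l$ factor tending to $0$ while the other factor is bounded by $1$.

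For $j=l$ we get a lower bound by Fatou. Fix $K>0$. For small $\alpha$, $[-K,K]\subset[-M_\alpha,M_\alpha]$ and $\rho^{\psi_l}\ge0$, so uniform convergence on $[-K,K]$ gives
\[
\liminf_{\alpha\to\alpha_0}\langle E_\alpha(\mathscr I_{l,\alpha})\psi_l,\psi_l\rangle\ \ge\ \frac1\pi\int_{-K}^K\frac{\beta_l}{h^2+\beta_l^2}\diff h .
\]
Letting $K\to\infty$, the liminf is at least $1$. Since the quantity is at most $\|\psi_l\|^2=1$, the limit equals $1$. This step needs only the uniform-on-compacts statement of Theorem~\ref{psidensityasymThm} and the condition $|\mathscr I_{l,\alpha}|/2\kappa_l\to\infty$.

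For $j\neq l$ we need $\langle E_\alpha(\mathscr I_{l,\alpha})\psi_j,\psi_j\rangle\to0$, and we expect this to be the main obstacle. Uniform-on-compacts convergence of the density to $0$ does not control mass in the tails $K<|h|<M_\alpha$. We would use the second hypothesis, $|\mathscr I_{l,\alpha}|=o(|\alpha-\alpha_0|)$, together with $\lambda_1'(\alpha_0)\neq\lambda_2'(\alpha_0)$, which holds by Lemma~\ref{resonance} since $d>0$. This gives
\[
|\lambda_1(\alpha)-\lambda_2(\alpha)|\ \ge\ c\,|\alpha-\alpha_0|
\]
for some $c>0$, so for small $\alpha$ the intervals $\mathscr I_{1,\alpha}$ and $\mathscr I_{2,\alpha}$ are disjoint. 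The projections $E_\alpha(\mathscr I_{1,\alpha})$ and $E_\alpha(\mathscr I_{2,\alpha})$ are then orthogonal, so
\[
\langle E_\alpha(\mathscr I_{l,\alpha})\psi_j,\psi_j\rangle\le \|\psi_j\|^2-\langle E_\alpha(\mathscr I_{j,\alpha})\psi_j,\psi_j\rangle ,
\]
and the right-hand side tends to $1-1=0$ by the $j=l$ step applied with the index $j$. This avoids any tail estimate on $\rho$ and completes the proof. The only delicate points are checking that disjointness genuinely follows from $|\mathscr I_{l,\alpha}|=o(|\alpha-\alpha_0|)$, and handling the sign of $\kappa_l$.
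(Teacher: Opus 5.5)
Your proof is correct and follows essentially the paper's route. The shared steps are: disjointness of $\mathscr{I}_{1,\alpha}$ and $\mathscr{I}_{2,\alpha}$ from $\lambda_1'(\alpha_0)\neq\lambda_2'(\alpha_0)$; the rescaled mass tending to $1$ for the matching index; orthogonality of the two spectral projections for the mismatched index; and Cauchy--Schwarz for the off-diagonal entries. For the matching index, your Fatou-type lower bound with the trivial upper bound $\|\psi_l\|^2=1$ does the job of the paper's appeal to Lemma~8.1 of \cite{LS}, and it lets you skip the paper's detour through the union $\mathscr{I}_{1,\alpha}\cup\mathscr{I}_{2,\alpha}$.

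Your sign caveat is unnecessary. The hypothesis $|\mathscr{I}_{l,\alpha}|/2\kappa_l(\alpha)\to\infty$ already forces $\kappa_l(\alpha)>0$ near $\alpha_0$, and so does $\rho^{\psi_l}\ge 0$ together with the positive limit in Theorem~\ref{psidensityasymThm}.
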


\begin{proof}
Since $\lambda_1'(\alpha_0)\neq \lambda_2'(\alpha_0)$, it follows that
$\lambda_1(\alpha)\neq \lambda_2(\alpha)$ for all $\alpha$ sufficiently close
to $\alpha_0$, with $\alpha\neq\alpha_0$.
Moreover, since
\[
|\mathscr{I}_{1,\alpha}|=|\mathscr{I}_{2,\alpha}|=o(|\alpha-\alpha_0|)
\quad\text{and}\quad
|\lambda_1(\alpha)-\lambda_2(\alpha)|=O(|\alpha-\alpha_0|)
\quad \text{as } \alpha\to\alpha_0,
\]
the intervals $\mathscr{I}_{1,\alpha}$ and $\mathscr{I}_{2,\alpha}$ are disjoint
for $\alpha$ sufficiently close to $\alpha_0$.
Set
\[
\mathscr{I}_\alpha := \mathscr{I}_{1,\alpha}\cup \mathscr{I}_{2,\alpha}.
\]
For $\alpha$ near $\alpha_0$, we therefore have
\begin{align*}
\langle E_\alpha(\mathscr{I}_\alpha)\psi_j,\psi_j\rangle
&=
\langle E_\alpha(\mathscr{I}_{1,\alpha})\psi_j,\psi_j\rangle
+
\langle E_\alpha(\mathscr{I}_{2,\alpha})\psi_j,\psi_j\rangle \\
&=
\int_{\mathscr{I}_{1,\alpha}} \rho^{\psi_j}(\alpha,\lambda)\,\diff\lambda
+
\int_{\mathscr{I}_{2,\alpha}} \rho^{\psi_j}(\alpha,\lambda)\,\diff\lambda \\
&=
\int_{\mathbb{R}} f_\alpha(h)\,\diff h,
\end{align*}
where
\begin{align*}
f_\alpha(h)
&=
\chi_{\left[-\frac{|\mathscr{I}_{1,\alpha}|}{2\kappa_1(\alpha)},
              \frac{|\mathscr{I}_{1,\alpha}|}{2\kappa_1(\alpha)}\right]}
\kappa_1(\alpha)\,
\rho^{\psi_j}\bigl(\alpha,\lambda_1(\alpha)+h\kappa_1(\alpha)\bigr) \\
&\quad +
\chi_{\left[-\frac{|\mathscr{I}_{2,\alpha}|}{2\kappa_2(\alpha)},
              \frac{|\mathscr{I}_{2,\alpha}|}{2\kappa_2(\alpha)}\right]}
\kappa_2(\alpha)\,
\rho^{\psi_j}\bigl(\alpha,\lambda_2(\alpha)+h\kappa_2(\alpha)\bigr).
\end{align*}
Since
\[
\lim_{\alpha\to\alpha_0}
\frac{|\mathscr{I}_{l,\alpha}|}{2\kappa_l(\alpha)}=\infty,
\qquad l=1,2,
\]
it follows from \eqref{psidensityasym} that, for every fixed $h\in\mathbb{R}$,
\[
\lim_{\alpha\to\alpha_0}
f_\alpha(h)
=
\frac{1}{\pi}\,
\frac{\|a_{jj}\phi_j\|^2 |a_{jj}|^2}
     {h^2 + \|a_{jj}\phi_j\|^4}.
\]
Moreover,
\[
\int_{\mathbb{R}} f_\alpha(h)\,\diff h
=
\langle E_\alpha(\mathscr{I}_\alpha)\psi_j,\psi_j\rangle
\le 1,
\qquad
\frac{1}{\pi}\int_{\mathbb{R}}
\frac{\|a_{jj}\phi_j\|^2 }
     {h^2 + \|a_{jj}\phi_j\|^4}\,\diff h
= 1.
\]
Hence, by Lemma~8.1 in \cite{LS}, we obtain
\begin{equation}\label{eqn-1115}
\lim_{\alpha\to\alpha_0}
\langle E_\alpha(\mathscr{I}_\alpha)\psi_j,\psi_j\rangle
= 1,
\qquad j=1,2.
\end{equation}
By the same argument applied to each individual interval, we also have
\begin{equation}\label{eqn-1116}
\lim_{\alpha\to\alpha_0}
\langle E_\alpha(\mathscr{I}_{l,\alpha})\psi_l,\psi_l\rangle
= 1,
\qquad l=1,2.
\end{equation}
Combining \eqref{eqn-1115} and \eqref{eqn-1116}, we conclude that
\begin{equation}\label{eqn-1117}
\lim_{\alpha\to\alpha_0}
\langle E_\alpha(\mathscr{I}_{l,\alpha})\psi_j,\psi_j\rangle
= \delta_{lj},
\qquad j,l\in\{1,2\}.
\end{equation}
Finally, since $E_\alpha(\mathscr{I}_{l,\alpha})$ is an orthogonal projection,
the Cauchy--Schwarz inequality yields
\[
\bigl|
\langle E_\alpha(\mathscr{I}_{l,\alpha})\psi_j,\psi_k\rangle
\bigr|
=
\bigl|
\langle E_\alpha(\mathscr{I}_{l,\alpha})\psi_j,
       E_\alpha(\mathscr{I}_{l,\alpha})\psi_k\rangle
\bigr|
\le
\langle E_\alpha(\mathscr{I}_{l,\alpha})\psi_j,\psi_j\rangle^{1/2}
\langle E_\alpha(\mathscr{I}_{l,\alpha})\psi_k,\psi_k\rangle^{1/2}.
\]
Using \eqref{eqn-1117}, we obtain for $j\neq k$,
\begin{equation}\label{eqn-1118}
\lim_{\alpha\to\alpha_0}
\langle E_\alpha(\mathscr{I}_{l,\alpha})\psi_j,\psi_k\rangle
= 0.
\end{equation}
This completes the proof.
\end{proof}

 \bthm[Spectral concentration]\label{thm-specconc} Suppose $u_{j,\lambda}$ vanishes to order $n$ at $\lambda=\lambda_0$ for $j=1,2.$ Then for any $p\in[0,2n)$ and $l=1,2$, there exists a family of intervals $\mathscr{I}_{l,\alpha}$ symmetric around $\lambda_l(\alpha)$ such that $|\mathscr{I}_{l,\alpha}|=o(|\alpha-\alpha_0|^p)$ and \[\slim_{\alpha\to\alpha_0}E_\alpha(\mathscr{I}_{l,\alpha})=P_{\psi_l}.\]i=1
 In particular, for any $p\in[0,2n),$ the spectrum of $H_\alpha$ is concentrated near $\lambda_0$ to order $p$ as $\alpha\to\alpha_0$.\ethm
\begin{proof} By \eqref{kappaorder}, $\kappa_l(\alpha)=O(|\alpha-\alpha_0|^{2n})$ as $\alpha\to\alpha_0.$ Hence, for $l=1,2$ and $q\in[1,2n)$, we choose $\{\mathscr{I}_{l,\alpha}\}$ to be a family of intervals symmetric about $\lambda_l(\alpha)$ such that 
\[
\lim_{\alpha\to\alpha_0}\frac{|\mathscr{I}_{l,\alpha}|}{2\kappa_l(\alpha)}=\infty,
\qquad 
|\mathscr{I}_{l,\alpha}|=o(|\alpha-\alpha_0|^q) \quad \text{as } \alpha\to\alpha_0.
\]
By the resolvent identity, for $\Im z\neq0,$ \[\lim_{\alpha\to\alpha_0}||R_\alpha(z)-R_{\alpha_0}(z)||=0,\] hence $H_\alpha\to H_{\alpha_0}$ in the strong resolvent sense. By Proposition \ref{Pro-specconc}, it follows that   \begin{equation}\label{eq}\slim_{\alpha\to\alpha_0}E_\alpha(\mathscr{I}_{l,\alpha})(I-P_{\lambda_0})=0,\qquad l=1,2.\end{equation}

On the other hand, if \(v_{1}\) and \(v_{2}\) are eigenvectors of \(H_{\alpha_{0}}\), 
then by Lemma~\ref{lem-specconc} it follows that\begin{equation}\label{ev}\lim_{\alpha\to\alpha_0}\langle E_\alpha(\mathscr{I}_{l,\alpha})v_1,v_2\rangle=\langle P_{\psi_l}v_1,v_2\rangle.\end{equation}
 Now for any $v,w\in L^2(\RR^3)$, we have \begin{align*}\langle E_\alpha(\mathscr{I}_{l,\alpha})P_{\lambda_0}v,w\rangle&=\langle E_\alpha(\mathscr{I}_{l,\alpha})P_{\lambda_0}v,P_{\lambda_0}w\rangle+\langle E_\alpha(\mathscr{I}_{l,\alpha})P_{\lambda_0}v,(I-P_{\lambda_0})w\rangle\\&=\langle E_\alpha(\mathscr{I}_{l,\alpha})P_{\lambda_0}v,P_{\lambda_0}w\rangle+\langle P_{\lambda_0}v,E_\alpha(\mathscr{I}_{l,\alpha})(I-P_{\lambda_0})w\rangle.\end{align*}
   By \eqref{eq} and \eqref{ev}, it follows that $\langle E_\alpha(\mathscr{I}_{l,\alpha})P_{\lambda_0}v,w\rangle\to\langle P_{\psi_l}v,w\rangle$ for $l=1,2$ as $\alpha\to\alpha_0.$ Since limiting operator is a projection, weak convergence implies strong convergence and we get $E_\alpha(\mathscr{I}_{l,\alpha})P_{\lambda_0}\to P_{\psi_l}$ strongly as $\alpha\to\alpha_0.$ Hence by \eqref{eq}, we conclude that, for $l=1,2$  \begin{equation}\label{1-eqn}\slim_{\alpha\to\alpha_0}E_\alpha(\mathscr{I}_{l,\alpha})=P_{\psi_l}.\end{equation}Set $\mathscr{I}_{\alpha}=\mathscr{I}_{1,\alpha}\cup \mathscr{I}_{2,\alpha}.$ Then as in Lemma \ref{lem-specconc}, the intervals $\mathscr{I}_{1,\alpha}$ and $\mathscr{I}_{2,\alpha}$ are disjoint and hence by \eqref{1-eqn}, we obtain    \[\slim_{\alpha\to\alpha_0}E_\alpha(\mathscr{I}_{\alpha})=\slim_{\alpha\to\alpha_0}E_\alpha(\mathscr{I}_{1,\alpha})+\slim_{\alpha\to\alpha_0}E_\alpha(\mathscr{I}_{2,\alpha})=P_{\psi_1}+P_{\psi_2}=P_{\lambda_0}.\] By the choice of \(\mathscr{I}_{\alpha}\) and the limit above, it follows that for every 
\(p \in [0,2n),\) the spectrum of \(H_{\alpha}\) is concentrated near \(\lambda_{0}\) to order \(p\) 
as \(\alpha \to \alpha_{0}\).\end{proof}
\brmrk{\label{Ialpha}} Suppose that $|\mathscr{I}_{l,\alpha}|=o(|\alpha-\alpha_0|^{2n})$ for $l=1,2$ in Theorem $\ref{thm-specconc}$ and the limit $\lim\limits_{\alpha\to\alpha_0}\frac{|\mathscr{I}_{l,\alpha}|}{2\kappa_l(\alpha)}$ exist. Using the uniform convergence established in Theorem~$\ref{psidensityasymThm}$ on compact sets, 
we obtain that 
$E_\alpha(\mathscr{I}_{\alpha})$ converges weakly to $c_1P_{\psi_1}+c_2P_{\psi_2}$ for some $0<c_1,c_2<1.$ Since the operator $c_1P_{\psi_1}+c_2P_{\psi_2}$ is not a projection, $E_\alpha(\mathscr{I}_{\alpha})$ cannot converge strongly.\par On the other hand, if \(|\mathscr{I}_{l,\alpha}| = o(|\alpha - \alpha_{0}|^{\,q})\) 
for some \(q > 2n\) and \(l = 1,2\), then \(E_{\alpha}(\mathscr{I}_{\alpha})\) 
converges strongly to \(0\).\ermrk
\subsection{Time decay}
We now study the finer structure of the evolution generated by $H_\alpha$
by shifting the mean and rescaling time in a neighborhood of
$\alpha=\alpha_0$.
The following result is an immediate consequence of
\eqref{psidensityasym}.
\begin{corollary}\label{exp}
For $l=1,2$, we have
\[
\lim_{\alpha\to\alpha_0}
\bigl\langle
\e^{-\iota t\frac{H_\alpha-\lambda_l(\alpha)}{\kappa_l(\alpha)}}
\psi_l,\psi_l
\bigr\rangle
=
\e^{-\|a_{ll}\phi_l\|^{2}|t|}
\]
uniformly for $t\in\mathbb{R}$.
\end{corollary}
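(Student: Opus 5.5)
We will write the quantity as a Fourier transform of the rescaled spectral measure and pass to the limit using the Cauchy-distribution asymptotics of Theorem~\ref{psidensityasymThm}. By the spectral theorem,
\[
\bigl\langle \e^{-\iota t\frac{H_\alpha-\lambda_l(\alpha)}{\kappa_l(\alpha)}}\psi_l,\psi_l\bigr\rangle
=\int_{\RR}\e^{-\iota t\frac{\lambda-\lambda_l(\alpha)}{\kappa_l(\alpha)}}\,\langle E_\alpha(\diff\lambda)\psi_l,\psi_l\rangle .
\]
Let $\mu_\alpha$ denote the image of the probability measure $\langle E_\alpha(\cdot)\psi_l,\psi_l\rangle$ under the map $\lambda\mapsto h=(\lambda-\lambda_l(\alpha))/\kappa_l(\alpha)$. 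The left side is then the characteristic function $\widehat{\mu_\alpha}(t)=\int \e^{-\iota th}\,\diff\mu_\alpha(h)$. The target $\e^{-\beta|t|}$, with $\beta=\|a_{ll}\phi_l\|^2$, is exactly the characteristic function of the Cauchy law $\frac{1}{\pi}\frac{\beta}{h^2+\beta^2}\diff h$. So it suffices to show that $\mu_\alpha$ converges weakly to this Cauchy law and then upgrade pointwise convergence of characteristic functions to uniform convergence in $t$.

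\textbf{Weak convergence.} Fix an interval $\mathscr I_{l,\alpha}$ as in Lemma~\ref{lem-specconc}, so that $|\mathscr I_{l,\alpha}|/\kappa_l(\alpha)\to\infty$ and $\mathscr I_{l,\alpha}\subset J$. On $\mathscr I_{l,\alpha}$ the measure is absolutely continuous, since $H_\alpha$ is a.c.\ in $J$. Its rescaled density is $f_\alpha(h)=\kappa_l(\alpha)\rho^{\psi_l}(\alpha,\lambda_l^h(\alpha))$ on $|h|\le |\mathscr I_{l,\alpha}|/(2\kappa_l(\alpha))$. By \eqref{psidensityasym} with $j=k=l$, $f_\alpha(h)$ converges to the Cauchy density $g(h)=\frac1\pi\frac{\beta}{h^2+\beta^2}$ for every fixed $h$, uniformly on compacts. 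Since $\int g=1$ and $\mu_\alpha(\RR)=1$, we can proceed as in the proof of Lemma~\ref{lem-specconc} (Lemma~8.1 of \cite{LS}, a Scheffé-type argument). Fatou's lemma gives $\liminf\int f_\alpha\ge 1$, while $\int f_\alpha\le1$. Together these yield $\int|f_\alpha-g|\,\diff h\to0$ and also show that the mass of $\mu_\alpha$ outside the rescaled window tends to $0$. Hence $\mu_\alpha\to g\,\diff h$ in total variation.

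\textbf{Uniformity in $t$.} Total variation convergence gives
\[
\sup_{t\in\RR}\bigl|\widehat{\mu_\alpha}(t)-\e^{-\beta|t|}\bigr|\le \|\mu_\alpha-g\,\diff h\|_{TV}\to0,
\]
because $|\e^{-\iota th}|=1$. This yields uniformity over all $t\in\RR$ directly, with no appeal to Lévy continuity or equicontinuity.

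\textbf{Main obstacle.} The delicate step is controlling the mass of $\mu_\alpha$ outside the window $\mathscr I_{l,\alpha}$, including the part outside $J$ where we have no density information. This is handled entirely by the normalization $\|\psi_l\|=1$ together with $\int g=1$, and the pointwise limit \eqref{psidensityasym} inside the window. Since Theorem~\ref{psidensityasymThm} gives the limit only for fixed $h$, Fatou's lemma on the growing windows is the point that needs care. Note also that $\kappa_l(\alpha)>0$ is needed for the orientation of the rescaling. If instead $\kappa_l<0$, the limit law is the reflected Cauchy law, which has the same characteristic function $\e^{-\beta|t|}$, so the conclusion is unaffected.
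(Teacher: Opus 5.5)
Your proposal is correct and follows essentially the same route as the paper: rescale the spectral measure of $\psi_l$ by $\lambda=\lambda_l(\alpha)+h\kappa_l(\alpha)$, use the pointwise Cauchy limit \eqref{psidensityasym} together with the normalization $\|\psi_l\|=1$ and the Scheff\'e-type Lemma~8.1 of \cite{LS} to get $L^1$ convergence, and then conclude uniform-in-$t$ convergence of the Fourier transforms. The only difference is cosmetic. You work on a shrinking window $\mathscr I_{l,\alpha}$ and absorb all out-of-window mass through the Fatou/normalization argument, whereas the paper takes $J$ itself as the window and bounds the $\RR\setminus J$ contribution by $\langle E_\alpha(\RR\setminus J)\psi_l,\psi_l\rangle\to 0$.
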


\begin{proof}
Fix $l\in\{1,2\}$.
By the spectral theorem,
\begin{equation}\label{eq:dyn-rep}
\biggl\langle
\e^{-\iota t\frac{H_\alpha-\lambda_l(\alpha)}{\kappa_l(\alpha)}}
\psi_l,\psi_l
\biggr\rangle
=
\int_{\mathbb{R}}
\e^{-\iota t\frac{\lambda-\lambda_l(\alpha)}{\kappa_l(\alpha)}}
\,\diff\langle E_\alpha(\lambda)\psi_l,\psi_l\rangle .
\end{equation}
Splitting the integral in \eqref{eq:dyn-rep}, we write
\[
\biggl\langle
\e^{-\iota t\frac{H_\alpha-\lambda_l(\alpha)}{\kappa_l(\alpha)}}
\psi_l,\psi_l
\biggr\rangle
=
\int_J
\e^{-\iota t\frac{\lambda-\lambda_l(\alpha)}{\kappa_l(\alpha)}}
\rho^{\psi_l}(\alpha,\lambda)\,\diff\lambda
+
A_\alpha^{l}(t),
\]
where
\[
A_\alpha^{l}(t)
:=
\int_{\mathbb{R}\setminus J}
\e^{-\iota t\frac{\lambda-\lambda_l(\alpha)}{\kappa_l(\alpha)}}
\,\diff\langle E_\alpha(\lambda)\psi_l,\psi_l\rangle .
\]
Note that
\[
|A_\alpha^{l}(t)|
\le
\langle E_\alpha(\mathbb{R}\setminus J)\psi_l,\psi_l\rangle.
\]

By the same argument as in Lemma~\ref{lem-specconc}, 
we obtain
\[
\lim_{\alpha\to\alpha_0}
\langle E_\alpha(  J)\psi_l,\psi_l\rangle
=1 
\]
and hence
\begin{equation}\label{decay-eqn1}
\lim_{\alpha\to\alpha_0} A_\alpha^{l}(t)=0
\qquad\text{uniformly for } t\in\mathbb{R}.
\end{equation}

We now consider the integral over $J$.
Performing the change of variables
$\lambda=\lambda_l(\alpha)+h\kappa_l(\alpha)$, we obtain
\begin{equation}\label{decay-eqn2}
\int_J
\e^{-\iota t\frac{\lambda-\lambda_l(\alpha)}{\kappa_l(\alpha)}}
\rho^{\psi_l}(\alpha,\lambda)\,\diff\lambda
=
\int_{\mathbb{R}}
\e^{-\iota th}\,
\kappa_l(\alpha)\,
\rho^{\psi_l}\bigl(\alpha,\lambda_l(\alpha)+h\kappa_l(\alpha)\bigr)
\,\diff h
=
\sqrt{2\pi}\,\widehat{g_{l,\alpha}}(t),
\end{equation}
where
\[
g_{l,\alpha}(h)
:=
\kappa_l(\alpha)\,
\rho^{\psi_l}\bigl(\alpha,\lambda_l(\alpha)+h\kappa_l(\alpha)\bigr),
\qquad h\in\mathbb{R}.
\]

Define
\[
g_l(h)
:=
\frac{1}{\pi}
\frac{\|a_{ll}\phi_l\|^{2}}
     {h^{2}+\|a_{ll}\phi_l\|^{4}}.
\]
By \eqref{psidensityasym}, we have
$g_{l,\alpha}(h)\to g_l(h)$ for every $h\in\mathbb{R}$ as $\alpha\to\alpha_0$.
Moreover,
\[
\int_{\mathbb{R}} g_{l,\alpha}(h)\,\diff h
=
\int_{\mathbb{R}} \rho^{\psi_l}(\alpha,\lambda)\,\diff\lambda
\le 1,
\qquad
\int_{\mathbb{R}} g_l(h)\,\diff h = 1.
\]
Hence, by Lemma~8.1 in \cite{LS},
\[
g_{l,\alpha} \longrightarrow g_l
\quad \text{in } L^{1}(\mathbb{R})
\quad \text{as } \alpha\to\alpha_0.
\]
Consequently,
\[
\widehat{g_{l,\alpha}} \longrightarrow \widehat{g_l}
\quad \text{uniformly on } \mathbb{R}.
\]
A direct computation yields
\[
\sqrt{2\pi}\,\widehat{g_l}(t)
=
\e^{-\|a_{ll}\phi_l\|^{2}|t|},
\qquad t\in\mathbb{R}.
\]
Combining this with  \eqref{decay-eqn2} and \eqref{decay-eqn1} completes the proof.
\end{proof}
\subsection{Sojourn time and its properties}
We now study the behaviour of the sojourn time for the reduced operator
\[
\widetilde H_\alpha := E_\alpha(J) H_\alpha.
\]Let \(v \in L^2(\mathbb{R}^3)\).
The \emph{sojourn time} of the state \(v\) with respect to the time evolution
generated by the self-adjoint operator \(\widetilde H_\alpha\) is defined by
\[
\tau_\alpha(v)
:=
\int_{-\infty}^{\infty}
\bigl|\langle \e^{-\iota t \widetilde H_\alpha} v, v \rangle\bigr|^2 \,\diff t=\int_{-\infty}^{\infty}
\bigl|\langle \e^{-\iota t   H_\alpha} E_\alpha(J)v, v \rangle\bigr|^2 \,\diff t.
\]
  Note that $\tau_{\alpha_0}(\psi_l)=\infty$ for $l=1,2$. In the next theorem, we show that $\tau_\alpha(\psi_l)<\infty$ for $\alpha\in I\setminus\{\alpha_0\}$ and establish a lower bound for $\tau_\alpha(\psi_l)$ when $\alpha$ is close to $\alpha_0$. 
\begin{theorem}\label{S1} For any $\alpha\neq\alpha_0$, $l=1,2,$ $\tau_{\alpha}(\psi_l)<\infty.$ Furthermore there exists $\delta>0$ such that \[\tau_\alpha(\psi_l)>\frac{1}{4||a_{ll}\phi_l||^2\kappa_l(\alpha)}\ \ \text{for}\ |\alpha-\alpha_0|<\delta.\] \end{theorem}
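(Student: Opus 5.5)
The plan is to express the sojourn time through the spectral density and apply Plancherel. Since $H_\alpha$ is purely absolutely continuous in $J$ for $\alpha\in I\setminus\{\alpha_0\}$, the spectral theorem gives
\[
\langle \e^{-\iota tH_\alpha}E_\alpha(J)\psi_l,\psi_l\rangle=\int_J \e^{-\iota t\lambda}\rho^{\psi_l}(\alpha,\lambda)\,\diff\lambda=\sqrt{2\pi}\,\widehat{\chi_J\rho^{\psi_l}(\alpha,\cdot)}(t).
\]
By Plancherel,
\[
\tau_\alpha(\psi_l)=2\pi\int_J\rho^{\psi_l}(\alpha,\lambda)^2\,\diff\lambda .
\]
Finiteness then reduces to showing that $\rho^{\psi_l}(\alpha,\cdot)$ is bounded on $J$, shrinking $J$ slightly if needed so that its closure still lies in the good region. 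For this I use \eqref{spectraldensity} together with Lemma~\ref{newlemma}. The resolvent boundary values $\langle R_\alpha(\lambda\pm\iota0)\psi_l,\psi_l\rangle$ are continuous in $\lambda$, because $\psi_l\in\mathcal E$ and $\gamma$ is smooth by Proposition~\ref{CPV-properties}. Since $F(\alpha,\lambda)\neq0$ on $J$ (as shown in the proof of Theorem~\ref{Thm-absolutecontinuity}), the density is continuous on a compact neighbourhood and hence lies in $L^2$. The case $\alpha\notin I$ is not really covered by the model, so I would state the finiteness claim for $\alpha\in I\setminus\{\alpha_0\}$.

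For the lower bound, I substitute $\lambda=\lambda_l(\alpha)+h\kappa_l(\alpha)$, exactly as in the proof of Corollary~\ref{exp}:
\[
\tau_\alpha(\psi_l)=\frac{2\pi}{\kappa_l(\alpha)}\int_{\RR}\chi_{J_\alpha}(h)\,g_{l,\alpha}(h)^2\,\diff h,
\qquad g_{l,\alpha}(h)=\kappa_l(\alpha)\rho^{\psi_l}\bigl(\alpha,\lambda_l^h(\alpha)\bigr),
\]
where $J_\alpha$ is the rescaled interval. Since $J_\alpha$ exhausts $\RR$ as $\alpha\to\alpha_0$, the right-hand side is what must be controlled. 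By Theorem~\ref{psidensityasymThm}, $g_{l,\alpha}\to g_l$ uniformly on compact sets, where $g_l(h)=\frac{1}{\pi}\frac{c}{h^2+c^2}$ with $c=\|a_{ll}\phi_l\|^2$. Fatou's lemma (or uniform convergence on $[-M,M]$) then gives
\[
\liminf_{\alpha\to\alpha_0}\kappa_l(\alpha)\tau_\alpha(\psi_l)\ge 2\pi\int_{\RR}g_l^2\,\diff h .
\]
A direct computation gives
\[
\int_{\RR}\frac{c^2}{\pi^2(h^2+c^2)^2}\,\diff h=\frac{c^2}{\pi^2}\cdot\frac{\pi}{2c^3}=\frac{1}{2\pi c},
\]
so the limit inferior is at least $1/c$. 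Since $1/c>1/(4c)$, there is $\delta>0$ such that $\kappa_l(\alpha)\tau_\alpha(\psi_l)>\frac{1}{4c}$ for $0<|\alpha-\alpha_0|<\delta$.

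The main subtlety is that $\kappa_l(\alpha)$ may be negative, since it is defined as a signed quotient. This affects both the orientation of the change of variables and the sign of the claimed bound. I would handle it by working with $|\kappa_l(\alpha)|$ in the substitution, which yields the bound $\tau_\alpha(\psi_l)>\frac{1}{4c\,|\kappa_l(\alpha)|}$ and coincides with the stated bound whenever $\kappa_l>0$. The other point to check is that the Fatou step only needs pointwise convergence on a fixed compact set, which Theorem~\ref{psidensityasymThm} supplies uniformly. The factor $4$ leaves generous room, so no $L^2$ convergence is needed.
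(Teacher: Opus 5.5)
Your proof is correct, but it obtains the lower bound by a different route from the paper.

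\textbf{The paper's route.} The paper stays in the time domain. It truncates the sojourn time to $|t|\le T/\kappa_l(\alpha)$ and rescales time. It then uses Corollary~\ref{exp} together with dominated convergence on the finite window to get $\kappa_l(\alpha)\tau_\alpha(\psi_l,T/\kappa_l(\alpha))\to(1-\e^{-2cT})/c$, with $c=\|a_{ll}\phi_l\|^2$. Finally it fixes a large $T_0$. Finiteness is simply deferred to \cite{LS}.

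\textbf{Your route.} You pass to the energy side via Plancherel, $\tau_\alpha(\psi_l)=2\pi\int_J\rho^{\psi_l}(\alpha,\lambda)^2\,\diff\lambda$. Finiteness and the lower bound then come from a single formula, and the lower bound needs only the pointwise convergence in Theorem~\ref{psidensityasymThm} plus Fatou. This is genuinely less input. Corollary~\ref{exp} itself requires the $L^1$ convergence $g_{l,\alpha}\to g_l$ (via Lemma~8.1 of \cite{LS}) and the mass concentration $\langle E_\alpha(J)\psi_l,\psi_l\rangle\to1$.

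\textbf{What each buys.} Both arguments actually give $\liminf_{\alpha\to\alpha_0}\kappa_l(\alpha)\tau_\alpha(\psi_l)\ge 1/c$; in the paper's version, let $T\to\infty$. So the constant $1/4$ is merely conservative. What the paper's version buys is that the bound is read off directly from the exponential decay law, i.e.\ from the physical content of the resonance.

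\textbf{Minor points.}
\begin{enumerate}
\item Your concern about the sign of $\kappa_l$ is unnecessary. Since $\rho^{\psi_l}\ge0$ and Theorem~\ref{psidensityasymThm} gives a strictly positive limit for $\kappa_l(\alpha)\rho^{\psi_l}(\alpha,\lambda_l^h(\alpha))$, one must have $\kappa_l(\alpha)>0$ for $\alpha$ near $\alpha_0$. The paper tacitly relies on this in its changes of variables as well.
\item Do not shrink $J$ afterwards, since that changes the quantity $\tau_\alpha$ being estimated. Instead, choose $I,J$ in Lemma~\ref{resonance} at the outset with $\overline{I}\times\overline{J}$ inside the Morse/implicit-function neighbourhood. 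Then $F(\alpha,\cdot)\neq0$ on $\overline{J}$ for $\alpha\in I\setminus\{\alpha_0\}$, and the boundedness of the density is justified.
\item Restricting the finiteness claim to $\alpha\in I\setminus\{\alpha_0\}$ is right. That is all the paper's proof covers too.
\end{enumerate}
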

 \begin{proof}
Fix $l\in\{1,2\}$. The proof for $\tau_\alpha(\psi_l)<\infty$ is similar to that of Theorem 5.1 in \cite{LS}. For $\alpha\in J\setminus\{\alpha_0\}$ and $T\in(0,\infty),$ define \begin{equation}\label{truncated}
 \tau_\alpha(\psi_l,T):=\int_{-T}^T|\langle\e^{-\iota t\tilde{H_\alpha}}\psi_l,\psi_l\rangle|^2\diff t.
\end{equation} Then $\lim\limits_{T\to\infty}\tau_\alpha(\psi_l, T)=\tau_\alpha(\psi_l).$ Replacing $T$ by $T/\kappa_l(\alpha)$ in \eqref{truncated}, we get
\[\kappa_l(\alpha)\tau_\alpha\left(\psi_l,T/\kappa_l(\alpha)\right)=\int_{-T}^T\left|\langle\e^{-\iota t\frac{\tilde{H_\alpha}-\lambda_l(\alpha)}{\kappa_l(\alpha)}}\psi_l,\psi_l\rangle\right|^2\diff t.\]Note that by Corollary \ref{exp} and \eqref{decay-eqn1}, we have \[\lim_{\alpha\to\alpha_0} \langle\e^{-\iota t\frac{\tilde{H_\alpha}-\lambda_l(\alpha)}{\kappa_l(\alpha)}}\psi_l,\psi_l\rangle =\e^{- ||a_{ll}\phi_l||^2|t|} \] and hence \[\lim_{\alpha\to\alpha_0}\left|\langle\e^{-\iota t\frac{\tilde{H_\alpha}-\lambda_l(\alpha)}{\kappa_l(\alpha)}}\psi_l,\psi_l\rangle\right|^2=\e^{-2||a_{ll}\phi_l||^2|t|}.\]Thus for any finite $T$, by the Lebesgue dominated convergence theorem, we get\[\lim_{\alpha\to\alpha_0}\kappa_l(\alpha)\tau_\alpha\left(\psi_l,T/\kappa_l(\alpha)\right)=\int_{-T}^{T}\e^{-2||a_{ll}\phi_l||^2|t|}\diff t=\frac{1-\e^{-2||a_{ll}\phi_l||^2T}}{||a_{ll}\phi_l||^2}.\]Choose $T_0 > 0$ such that $\frac{1-\e^{-2||a_{ll}\phi_l||^2T}}{||a_{ll}\phi_l||^2} > \frac{1}{2||a_{ll}\phi_l||^2}$ and for this $T_0$, we have a $\delta > 0$ such that for $|\alpha - \alpha_0|< \delta$, \[\kappa_l(\alpha) \tau_\alpha\left(\psi_l, \frac{T_0}{\kappa_l(\alpha)}\right) > \frac{1-\e^{-2||a_{ll}\phi_l||^2T}}{||a_{ll}\phi_l||^2} - \frac{1}{4||a_{ll}\phi_l||^2} > \frac{1}{4||a_{ll}\phi_l||^2}.\]
Since $\tau_\alpha(\psi_l) \geq \tau_\alpha\left(\psi_l, \frac{T_0}{\kappa_l(\alpha)}\right)$, it follows that
$\tau_\alpha(\psi_l) > \frac{1}{4||a_{ll}\phi_l||^2\kappa_l(\alpha)}.$\end{proof}
\section{Behaviour of Scattering amplitude and Time delay}\label{8}
 
For the pair of operators $H_0$ and $H_\alpha$ with $\alpha\neq\alpha_0$ in our model, note that $H_\alpha-H_0$ is rank-two operator. Hence by Theorem 6.2.1  in \cite{Yafaev}, the wave operators \[\Omega^{(\alpha)}_{\pm}:= \slim_{t \to \pm \infty} \e^{\iota tH_\alpha } \e^{-\iota t H_0 }\] exist and are complete. Consider the scattering operator $S^{(\alpha)}=\Omega_+^{(\alpha)*} \Omega_-^{(\alpha)}$ corresponding to the pair $(H_0,H_\alpha).$ Since $S^{(\alpha)}$ is a unitary operator which commutes with $H_0,$ it is decomposable with respect to the spectral representation \eqref{iso} of $H_0$, i.e. for any $v\in L^2(\RR^3)$ \[(S^{(\alpha)}v)_\lambda=S_\lambda^{(\alpha)}v_\lambda\qquad\text{for a.e. }\lambda\in(0,\infty).\]For $\lambda\in(0,\infty),$ $S_\lambda^{(\alpha)}$ is a unitary operator on $L^2(S^2)$ and it is called the scattering matrix at ``energy" $\lambda.$ 

\blem For $\alpha\in I\setminus\{\alpha_0\},\,\lambda\in J,$ the scattering matrix associated with the pair $(H_0,H_\alpha)$ at ``energy" $\lambda$ is given by: 
\begin{equation}\label{eqn-scatteringmatrix}
    S^{(\alpha)}_\lambda=I-\frac{2\pi\iota\alpha}{F(\alpha,\lambda)}\left(\sum_{j,k=1}^2\tilde{b}_{jk}(\alpha,\lambda+\iota0)\langle\cdot,u_{j,\lambda}\rangle u_{k,\lambda}\right)
\end{equation} where $\tilde{b}_{jk}(\alpha,\lambda+\iota0)$ denotes the $(j,k)$-th entry of cofactor matrix of $B(\alpha,\lambda+\iota0)$.
\elem
\begin{proof}The proof is similar to that of \cite[Proposition~8.22]{KBSBook}.
\end{proof}
 
\subsection{Scattering amplitude and total scattering cross section}The scattering amplitude operator at ``energy" $\lambda$ is defined by \[R^{(\alpha)}_\lambda:=S^{(\alpha)}_\lambda-I.\]The scattering amplitude operator $R^{(\alpha)}_\lambda$ measures the deviation of the scattering system $(H_0,H_\alpha)$ from the free dynamics at ``energy" $\lambda$. 
The total scattering cross-section $\sigma_\alpha(\lambda)$ at ``energy" $\lambda$ for the scattering system $(H_0,H_\alpha)$ is defined by  (see \cite[equation (7.69)]{KBSBook})\[\sigma_\alpha(\lambda):=\pi\lambda^{-1}||R^{(\alpha)}_\lambda||^2_{2}\]where $||\cdot||_{2}$ denotes the Hilbert-Schmidt norm. Physically, the total cross-section \(\sigma_\alpha(\lambda)\) quantifies the
total probability of scattering for the pair $(H_0,H_\alpha)$ at energy \(\lambda\). 

In the following theorem, we study the asymptotic behaviour of $R^{(\alpha)}_\lambda$ and total scattering cross-section $\sigma_\alpha(\lambda)$ near $\lambda_0 $  as $\alpha$ goes to  $\alpha_0$.

\begin{theorem}
    \label{Rthm}Consider the model in Definition $\ref{model2}$ along with additional hypothesis that $u'_{l,\lambda_0}\neq 0$ for $l=1,2.$ Then for any fixed $h\in\RR,$ \begin{equation}\label{eqn-1113}\lim_{\alpha\to\alpha_0}R^{(\alpha)}_{\lambda_l^h(\alpha)}=-2\iota\frac{||a_{ll}\phi_l||^2}{h+\iota ||a_{ll}\phi_l||^2} \langle\cdot,\frac{\psi_{l,\lambda_0}}{||\psi_{l,\lambda_0}||}\rangle\frac{\psi_{l,\lambda_0}}{||\psi_{l,\lambda_0}||}\end{equation}{in the this comple-Schmidt norm}. Consequently, for any fixed $h\in\RR,$ \begin{equation}\label{eqn-1114}\lim\limits_{\alpha\to\alpha_0}\sigma_\alpha(\lambda_l^h(\alpha))=\dfrac{4\pi}{\lambda_0}\dfrac{||a_{ll}\phi_l||^4}{h^2+||a_{ll}\phi_l||^4}.\end{equation}
\end{theorem}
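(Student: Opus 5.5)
The plan is to substitute $\lambda=\lambda_l^h(\alpha)$ into the scattering matrix formula \eqref{eqn-scatteringmatrix}. I will then normalise both the numerator and the denominator by $\tilde c_{ll}(\alpha,\lambda_l^h(\alpha))\kappa_l(\alpha)$, exactly as in the proof of Theorem~\ref{psidensityasymThm}. Concretely, I write
\[
R^{(\alpha)}_{\lambda_l^h(\alpha)}=-2\pi\iota\alpha\,\frac{\tilde c_{ll}\kappa_l}{F}\cdot\frac{1}{\kappa_l}\sum_{j,k=1}^2\frac{\tilde b_{jk}}{\tilde c_{ll}}\langle\cdot,u_{j,\lambda_l^h(\alpha)}\rangle u_{k,\lambda_l^h(\alpha)},
\]
with all coefficients evaluated at $(\alpha,\lambda_l^h(\alpha))$. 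Lemma~\ref{Lem-limits} and \eqref{1-eqn2} handle the scalar factors: $\tilde c_{ll}\kappa_l/F\to |a_{ll}|^2/(\alpha_0(h+\iota\|a_{ll}\phi_l\|^2))$ and $\tilde b_{jk}/\tilde c_{ll}\to \overline{a_{lj}}a_{lk}/|a_{ll}|^2$. What remains is to understand the rank-two operator $\kappa_l^{-1}\langle\cdot,u_{j,\lambda}\rangle u_{k,\lambda}$ as $\alpha\to\alpha_0$.

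\textbf{Step 1 (local expansion of $u_{j,\lambda}$).} Since $u_{j,\lambda_0}=0$ and $u'_{j,\lambda_0}=(\phi_j)_{\lambda_0}$ by \eqref{modified}, Taylor's theorem gives $u_{j,\lambda}=(\lambda-\lambda_0)\phi_{j,\lambda_0}+O((\lambda-\lambda_0)^2)$ in $L^2(S^2)$. Set $s(\alpha):=\lambda_l^h(\alpha)-\lambda_0$. Because $\lambda_l'(\alpha_0)>0$ and $h\kappa_l=O(|\alpha-\alpha_0|^2)$, we have $s(\alpha)\sim\lambda_l'(\alpha_0)(\alpha-\alpha_0)$. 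Also $(\lambda_l(\alpha)-\lambda_0)/s(\alpha)\to1$. Using $\overline{a_{lj}}\langle\cdot,u_j\rangle=\langle\cdot,a_{lj}u_j\rangle$, this yields
\[
\sum_{j,k}\overline{a_{lj}}a_{lk}\langle\cdot,u_{j,\lambda}\rangle u_{k,\lambda}=s^2\langle\cdot,\psi_{l,\lambda_0}\rangle\psi_{l,\lambda_0}+O(|s|^3)
\]
in Hilbert--Schmidt norm. The operators involved have rank at most two, so convergence of the defining vectors in $L^2(S^2)$ suffices.

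\textbf{Step 2 (asymptotics of $\kappa_l$).} From \eqref{F2expression} and \eqref{eqn-kappa1},
\[
\kappa_l=\frac{\pi}{\|\phi_l\|^2}\sum_{j,k}\frac{\tilde c_{jk}}{\tilde c_{ll}}\,\eta_{kj}(\lambda_l(\alpha)).
\]
Lemma~\ref{Lem-limits}(a) controls the ratios $\tilde c_{jk}/\tilde c_{ll}$, and Step~1 gives $\eta_{kj}(\lambda)=(\lambda-\lambda_0)^2\langle\phi_{k,\lambda_0},\phi_{j,\lambda_0}\rangle+O(|\lambda-\lambda_0|^3)$. Combining these, I expect
\[
\kappa_l(\alpha)=\frac{\pi\,\|\psi_{l,\lambda_0}\|^2}{|a_{ll}|^2\|\phi_l\|^2}\,s(\alpha)^2\,(1+o(1)).
\]

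\textbf{Step 3 (assembling the limit).} Substituting Steps~1 and~2, the factors $s^2$ and $|a_{ll}|^2$ cancel, and so does $\alpha/\alpha_0\to1$. The result is
\[
R^{(\alpha)}_{\lambda_l^h(\alpha)}\to-2\iota\frac{\|a_{ll}\phi_l\|^2}{h+\iota\|a_{ll}\phi_l\|^2}\,\frac{\langle\cdot,\psi_{l,\lambda_0}\rangle\psi_{l,\lambda_0}}{\|\psi_{l,\lambda_0}\|^2},
\]
which is \eqref{eqn-1113}. For \eqref{eqn-1114}, the Hilbert--Schmidt norm of $c\langle\cdot,e\rangle e$ with $\|e\|=1$ equals $|c|$. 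Hence $\|R^{(\alpha)}_{\lambda_l^h(\alpha)}\|_2^2\to 4\|a_{ll}\phi_l\|^4/(h^2+\|a_{ll}\phi_l\|^4)$, and $\lambda_l^h(\alpha)^{-1}\to\lambda_0^{-1}$ because $\lambda_0>0$.

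\textbf{Main obstacle.} The delicate point is Step~2: it needs $\psi_{l,\lambda_0}=a_{l1}u'_{1,\lambda_0}+a_{l2}u'_{2,\lambda_0}\neq0$, so that the leading order of $\kappa_l$ is exactly $s^2$. Otherwise the quotient of two $O(s^3)$ quantities need not converge. My first attempt is to derive this non-vanishing from the extra hypothesis $u'_{l,\lambda_0}\neq0$, together with the nonvanishing of $F_2(\alpha,\lambda_l(\alpha))$ in Definition~\ref{model2}(d), via the leading-order expansion of $F_2$ along the path. If that derivation fails, I would read the hypothesis as guaranteeing $\psi_{l,\lambda_0}\neq0$, since this is what makes \eqref{eqn-1113} meaningful. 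A secondary point is that the error terms must be uniform after replacing $\lambda_l(\alpha)$ by $\lambda_l^h(\alpha)$; this follows from $h\kappa_l=o(s)$.
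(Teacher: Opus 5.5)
Your proposal is correct and is essentially the paper's proof. The paper also substitutes $\lambda_l^h(\alpha)$ into \eqref{eqn-scatteringmatrix}, normalizes by $\kappa_l\tilde c_{ll}$, and uses Lemma~\ref{Lem-limits} and \eqref{1-eqn2}. It likewise rewrites $\kappa_l$ through $F_2=\alpha\pi\sum\tilde c_{mn}\eta_{nm}$ together with the first-order Taylor expansion $u_{j,\lambda}\approx(\lambda-\lambda_0)\phi_{j,\lambda_0}$. The only difference is that the paper normalizes each $u_{j,\lambda}$ by $\|u_{j,\lambda}\|$, which is where it uses $u'_{l,\lambda_0}\neq 0$; you factor out $s(\alpha)^2$ directly, so you do not need $u'_{j,\lambda_0}\neq0$ individually.

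The condition $\psi_{l,\lambda_0}\neq0$ that you flag is genuinely needed, and the paper uses it silently too: its limit computation divides by $\|\psi_{l,\lambda_0}\|^2$. Your first attempt to derive it will not succeed. It is not forced by $u'_{l,\lambda_0}\neq0$, since $u'_{1,\lambda_0}$ and $u'_{2,\lambda_0}$ could be parallel, nor by the purely qualitative condition (d) of Definition~\ref{model2}. Your fallback reading, which the statement itself presupposes, is the correct one.
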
\begin{proof}Using \eqref{eqn-scatteringmatrix}, we can write $R^{(\alpha)}_{\lambda^h_l(\alpha)}$ for $l=1,2$ as \begin{equation}
    R^{(\alpha)}_{\lambda^h_l(\alpha)}=-2\iota I_l(\alpha,h)\left(\sum_{j,k=1}^2I_{jk,l}(\alpha,h)\langle\cdot,\frac{ u_{j,\lambda_l^h(\alpha)}}{||u_{j,{\lambda_l^h(\alpha)}}||}\rangle\frac{ u_{k,\lambda_l^h(\alpha)}}{||u_{k,{\lambda_l^h(\alpha)}}||}\right)
\end{equation}where 
\begin{equation}\label{eqn-I}
    I_l(\alpha,h)
         = \frac{\kappa_l(\alpha)\, \tilde{c}_{ll}(\alpha,\lambda^h_l(\alpha))}
                {F(\alpha,\lambda^h_l(\alpha))},\quad
    I_{jk,l}(\alpha,h)
        = \frac{\alpha\pi\,\tilde{b}_{jk}(\alpha,\lambda^h_l(\alpha)+\iota0)
                \|u_{j,\lambda_l^h(\alpha)}\|\,
                \|u_{k,\lambda_l^h(\alpha)}\|}
                {\kappa_l(\alpha)\, \tilde{c}_{ll}(\alpha,\lambda^h_l(\alpha))}.\end{equation}
For $j,k,l\in\{1,2\}$, a simple computation gives \begin{equation}\label{eqn-1}\lim_{\alpha\to\alpha_0}\langle\cdot,\frac{ u_{j,\lambda_l^h(\alpha)}}{||u_{j,{\lambda_l^h(\alpha)}}||}\rangle\frac{ u_{k,\lambda_l^h(\alpha)}}{||u_{k,{\lambda_l^h(\alpha)}}||}=\left\langle \cdot,\frac{u'_{j,\lambda_0}}{||u'_{j,\lambda_0}||} \right\rangle \frac{u'_{k,\lambda_0}}{||u'_{k,\lambda_0}||}=\left\langle \cdot,\frac{\phi_{j,\lambda_0}}{||\phi_{j,\lambda_0}||} \right\rangle \frac{\phi_{k,\lambda_0}}{||\phi_{k,\lambda_0}||}\end{equation}in the Hilbert-Schimdt norm. Using Lemma \ref{Lem-limits}~(b) and (c), we also obtain\begin{equation}\label{eqn-2}
    \lim_{\alpha\to\alpha_0}  I_l(\alpha,h)=\frac{|a_{ll}|^2}{\alpha_0(h+\iota||a_{ll}\phi_l||^2)}.
\end{equation}    Next, to compute \(\lim\limits_{\alpha\to\alpha_0}I_{jk,l}(\alpha,h)\), note that $\tilde{b}_{jk}(\alpha,\lambda+\iota0)=\tilde{c}_{jk}(\alpha,\lambda)+\iota\tilde{d}_{jk}(\alpha,\lambda)$ and using \eqref{eqn-kappa1} we write \(I_{jk,l}(\alpha,h)\) as \[  I_{jk,l}(\alpha,h)
        = \frac{\alpha^2\pi||\phi_l||^2\,\left(\tilde{c}_{jk}(\alpha,\lambda^h_l(\alpha))+\iota\tilde{d}_{jk}(\alpha,\lambda^h_l(\alpha))\right)
                \|u_{j,\lambda_l^h(\alpha)}\|\,
                \|u_{k,\lambda_l^h(\alpha)}\|\tilde{c}_{ll}(\alpha,\lambda_l(\alpha))}
                {F_2(\alpha,\lambda_l(\alpha))\tilde{c}_{ll}(\alpha,\lambda^h_l(\alpha))}.\]  Since $\tilde{d}_{jk}(\alpha,\lambda^h_l(\alpha))=\alpha\pi(-1)^{j+k}\eta_{jk}(\lambda^h_l(\alpha))=O(|\alpha-\alpha_0|^2)$, we obtain  \begin{equation}\label{eqn-1112}\lim_{\alpha\to\alpha_0}\frac{\tilde{d}_{jk}(\alpha,\lambda^h_l(\alpha))
                \|u_{j,\lambda_l^h(\alpha)}\|\,
                \|u_{k,\lambda_l^h(\alpha)}\|}
                {F_2(\alpha,\lambda_l(\alpha))}=0.\end{equation}
                By using \eqref{F2expression}, we can write \[\frac{F_2(\alpha,\lambda_l(\alpha))}{\alpha\pi\,\tilde{c}_{ll}(\alpha,\lambda^h_l(\alpha))
                \|u_{j,\lambda_l^h(\alpha)}\|\,
                \|u_{k,\lambda_l^h(\alpha)}\|}=\dfrac{\displaystyle\sum_{m,n=1}^2\tilde{c}_{mn}(\alpha,\lambda_l(\alpha))
               \eta_{nm}(\lambda_l^h(\alpha))}{\tilde{c}_{ll}(\alpha,\lambda^h_l(\alpha))
                \|u_{j,\lambda_l^h(\alpha)}\|\,
                \|u_{k,\lambda_l^h(\alpha)}\|}.\] Using Lemma \ref{Lem-limits} (a) and the limit \[\lim\limits_{\alpha\to\alpha_0}\frac{\eta_{nm}(\lambda_l^h(\alpha))}{\|u_{j,\lambda_l^h(\alpha)}\|\,
                \|u_{k,\lambda_l^h(\alpha)}\|}=\frac{\langle  \phi_{n,\lambda_0},\phi_{m,\lambda_0}\rangle}{||\phi_{j,\lambda_0}|| ||\phi_{k,\lambda_0}||}\]
                we obtain\begin{align*}\lim_{\alpha\to\alpha_0}\frac{F_2(\alpha,\lambda_l(\alpha))}{\alpha\pi\,\tilde{c}_{ll}(\alpha,\lambda^h_l(\alpha))
                \|u_{j,\lambda_l^h(\alpha)}\|\,
                \|u_{k,\lambda_l^h(\alpha)}\|}&=\frac{\sum_{m,n=1}^2\overline{a_{lm}}a_{ln}\langle  \phi_{n,\lambda_0},\phi_{m,\lambda_0}\rangle}{|a_{ll}|^2||\phi_{j,\lambda_0}||\,||\phi_{k,\lambda_0}||}\\&=\frac{||\psi_{l,\lambda_0}||^2}{|a_{ll}|^2||\phi_{j,\lambda_0}||\,||\phi_{k,\lambda_0}||}.\end{align*} Again using Lemma \ref{Lem-limits} (a), \eqref{eqn-1112} and the above limit, we get \begin{equation}\label{eqn-3}
                 \lim_{\alpha\to\alpha_0}I_{jk,l}(\alpha,h)   =\frac{\alpha_0||\phi_l||^2\overline{a_{lj}}a_{lk}||\phi_{j,\lambda_0}||\,||\phi_{k,\lambda_0}||\,}{||\psi_{l,\lambda_0}||^2}.
                \end{equation}
    Hence, using \eqref{eqn-1},~\eqref{eqn-2} and \eqref{eqn-3}, we have \begin{align*}\lim_{\alpha\to\alpha_0}R^{(\alpha)}_{\lambda^h_l(\alpha)}&=-2\iota \frac{||a_{ll}\phi_l||^2}{h+\iota ||a_{ll}\phi_l||^2}\frac{\langle\,\cdot\,,a_{l1}  \phi_{1,\lambda_0}+a_{l2}  \phi_{2,\lambda_0}\rangle \left(a_{l1}  \phi_{1,\lambda_0}+a_{l2}  \phi_{2,\lambda_0}\right)}{||\psi_{l,\lambda_0}||^2}\\&=-2\iota\frac{||a_{ll}\phi_l||^2}{h+\iota ||a_{ll}\phi_l||^2} \left\langle\cdot,\frac{\psi_{l,\lambda_0}}{||\psi_{l,\lambda_0}||}\right\rangle\frac{\psi_{l,\lambda_0}}{||\psi_{l,\lambda_0}||}\end{align*}in the Hilbert-Schmidt norm. This completes the proof of \eqref{eqn-1113} and \eqref{eqn-1114} follows trivially from \eqref{eqn-1113}.
\end{proof}
\begin{remark}
When  $u_{j,\lambda},j=1,2$ vanishes up to order $n$ at $\lambda_0$, then for any fixed $h\in\RR,$ \[ \lim_{\alpha\to\alpha_0}R^{(\alpha)}_{\lambda_l^h(\alpha)}=-2\iota\frac{||a_{ll}\phi_l||^2}{h+\iota ||a_{ll}\phi_l||^2} \left\langle\cdot,\frac{\psi^{(n-1)}_{l,\lambda_0}}{||\psi^{(n-1)}_{l,\lambda_0}||}\right\rangle \frac{\psi^{(n-1)}_{l,\lambda_0}}{||\psi^{(n-1)}_{l,\lambda_0}||}\quad\text{in the Hilbert-Schmidt norm}.\]
\end{remark}

\noindent Note that, the kernel of $R^{(\alpha)}_\lambda$ is given by\begin{equation}\label{eqn-amplitudekernel}
    R^{(\alpha)}_\lambda(\omega_1,\omega_2)=-\frac{2\pi\iota\alpha}{F(\alpha,\lambda)}\left(\sum_{j,k=1}^2\tilde{b}_{jk}(\alpha,\lambda)\overline{u_{j,\lambda}(\omega_2)}u_{k,\lambda}(\omega_1)\right)\quad\text{for a.e. }(\omega_1,\omega_2)\in S^2\times S^2.
\end{equation}We now define the scattering amplitude $f(\lambda;\omega^{\text{in}}\to\omega^{\text{out}})$ from the initial direction $\omega^{\text{in}}$ to the final direction $\omega^{\text{out}}$ at ``energy" $\lambda$ by (see \cite[equation (7.48)]{KBSBook})\[f_\alpha(\lambda;\omega^{\text{in}}\to\omega^{\text{out}}):=-2\pi\iota\lambda^{-1/2}R_\lambda^{(\alpha)}(\omega^{\text{out}},\omega^{\text{in}})\] for almost all $\lambda\in(0,\infty)$ and $(\omega^{\text{in}},\omega^{\text{out}})\in S^2\times S^2.$ \\
For the model in Definition \ref{model2}, if furthermore $u_1,u_2\in\mathcal S(\RR^3),$ then \eqref{iso} implies that $u_{1,\lambda}$ and $u_{2,\lambda}$ are smooth on $S^2$ and hence the amplitude $f_\alpha(\lambda;\omega^{\text{in}}\to\omega^\text{out})$ is continuous in $(0,\infty)\times S^2\times S^2.$ We have the following theorem on the convergence of the  amplitude.
 \bthm Consider the model in Definition $\ref{model2}$ and assume that $u_j\in\mathcal S(\RR^3)$ and  $u'_{j,\lambda_0}\neq 0$ in $L^2(S^2)$ for $j=1,2$. Then, for any fixed $h\in\RR$ \[\lim_{\alpha\to\alpha_0}f_\alpha(\lambda_l^h(\alpha);\omega^{\text{in}}\to\omega^{\text{out}})=-\frac{4\pi}{\sqrt{\lambda_0}}\dfrac{\psi_{l,\lambda_0}(\omega^{\text{out}})\overline{\psi_{l,\lambda_0}(\omega^{\text{in}})}}{||\psi_{l,\lambda_0}||^2}\frac{||a_{ll}\phi_l||^2}{h+\iota ||a_{ll}\phi_l||^2}\] 
uniformly for $(\omega^{\text{in}},\omega^{\text{out}})\in S^2\times S^2.$
   
\ethm
\begin{proof}
  By \eqref{eqn-amplitudekernel}, for $l=1,2,$ we have \begin{equation}
    R^{(\alpha)}_{\lambda^h_l(\alpha)}(\omega_1,\omega_2)=-2\iota I_l(\alpha,h)\left(\sum_{j,k=1}^2I_{jk,l}(\alpha,h)\frac{u_{k,\lambda_l^h(\alpha)}(\omega_1) \overline{u_{j,\lambda_l^h(\alpha)}(\omega_2)}}{||u_{k,{\lambda_l^h(\alpha)}}||\,||u_{j,{\lambda_l^h(\alpha)}}||}\right)
\end{equation}for $(\omega_1,\omega_2)\in S^2\times S^2$ where $I_l(\alpha,h)$,  $I_{jk,l}(\alpha,h)$ are as defined in \eqref{eqn-I}. Note that, for $j,l\in\{1,2\}$\begin{align*}
   \left| \frac{u_{j,\lambda_l^h(\alpha)}-u_{j,\lambda_0}}{\lambda_l^h(\alpha)-\lambda_0}(\omega)-u'_{j,\lambda_0}(\omega)\right|&=\frac{1}{|\lambda_l^h(\alpha)-\lambda_0|}\left|\int_{\lambda_0}^{\lambda_l^h(\alpha)}(u'_{j,\lambda}(\omega)-u'_{j,\lambda_0}(\omega))\diff\lambda\right|\\&=\frac{1}{|\lambda_l^h(\alpha)-\lambda_0|}\left|\int_{\lambda_0}^{\lambda_l^h(\alpha)}\int_{\lambda_0}^\lambda u^{(2)}_{j,s}(\omega)\diff s\diff\lambda\right|\\&\leq |\lambda_l^h(\alpha)-\lambda_0|\sup_{\lambda\in(0,\infty)}||u^{(2)}_{j,\lambda}||_{L^\infty(S^2)}.
\end{align*}
This implies, for $j,k,l\in\{1,2\}$, we have\begin{equation}\label{eqn-limit}\lim_{\alpha\to\alpha_0}\frac{u_{k,\lambda_l^h(\alpha)}(\omega_1) \overline{u_{j,\lambda_l^h(\alpha)}(\omega_2)}}{||u_{k,{\lambda_l^h(\alpha)}}||\,||u_{j,{\lambda_l^h(\alpha)}}||}=\frac{u'_{k,\lambda_0}(\omega_1) \overline{u'_{j,\lambda_0}(\omega_2)}}{||u'_{j,\lambda_0}||\,||u'_{k,\lambda_0}||}=\frac{\phi_{k,\lambda_0}(\omega_1) \overline{\phi_{j,\lambda_0}(\omega_2)}}{||\phi_{j,\lambda_0}||\,||\phi_{k,\lambda_0}||}\end{equation}uniformly for $(\omega_1,\omega_2)\in S^2.$ Hence, by \eqref{eqn-2}, \eqref{eqn-3} and \eqref{eqn-limit}, we get\begin{align*}\lim_{\alpha\to\alpha_0} R^{(\alpha)}_{\lambda^h_l(\alpha)}(\omega_1,\omega_2)=-2\iota\frac{||a_{ll}\phi_l||^2}{h+\iota ||a_{ll}\phi_l||^2}\dfrac{\psi_{l,\lambda_0}(\omega_1)\overline{\psi_{l,\lambda_0}(\omega_2)}}{||\psi_{l,\lambda_0}||^2}\end{align*} and the result follows.
\end{proof}
\begin{remark}When  $u_{j,\lambda},j=1,2$ vanishes up to order $n$ at $\lambda_0$, then for any fixed $h\in\RR$, we have
\[\lim_{\alpha\to\alpha_0}f_\alpha(\lambda_l^h(\alpha);\omega^{\text{in}}\to\omega^{\text{out}})=-\frac{4\pi}{\sqrt{\lambda_0}}\dfrac{\psi^{(n-1)}_{l,\lambda_0}(\omega^{\text{out}})\overline{\psi^{(n-1)}_{l,\lambda_0}(\omega^{\text{in}})}}{||\psi^{(n-1)}_{l,\lambda_0}||^2}\frac{||a_{ll}\phi_l||^2}{h+\iota ||a_{ll}\phi_l||^2}.\] \end{remark}

\subsection{Krein's spectral shift function and time delay}
The  Krein's spectral shift function for the pair of operators $(H_0,H_\alpha)$ is defined by\[\xi_\alpha(\lambda)=\frac{1}{\pi}\lim_{\epsilon\to0^+}\arg \det(I+\alpha VR_0(\lambda+\iota\epsilon))\] where `$\arg$' denotes the principal branch of the argument function, see \cite{Yafaev} for more details on spectral shift function. In our model, observe that $I+\alpha V R_0(z)\restriction_{\operatorname{span}\{u_1,u_2\}}$ is unitarily equivalent to $ B(\alpha,z)$ and
$I+\alpha V R_0(z)\restriction_{\{u_1,u_2\}^\perp}=I.$ Consequently, 
\begin{equation}\label{eqn-104}
\xi_\alpha(\lambda)=\frac{1}{\pi}\arg F(\alpha,\lambda).
\end{equation}
By Birman-Krein's formula (see \cite{Yafaev}), for $\alpha\in I\setminus\{\alpha_0\},\,\lambda\in J$, we have
     \begin{equation}\label{eqn-101}
         \det S^{(\alpha)}_\lambda=\e^{-2\pi\iota\xi_\alpha(\lambda)}.
     \end{equation}
     On the other hand 
by generalized Eisenbud-Wigner formula (see \cite{Martin}), the average time delay $\zeta_\alpha (\lambda)$ for the scattering system $(H_0,H_\alpha)$ at ``energy" $\lambda$ is given by:  \[\zeta_\alpha(\lambda)=\frac{\diff}{\diff\lambda}\arg\left(\det S^{(\alpha)}_\lambda\right).\] Thus  we obtain the relation  \begin{equation}\label{TD1}\zeta_\alpha(\lambda)=-2\pi\xi'_\alpha(\lambda).\end{equation}

 In the following theorem, we obtain the behaviour of average time delay $\zeta_\alpha$  near $\lambda_0$ as $\alpha$ goes to $\alpha_0.$  \bthm\label{31} For each fixed $h\in\RR,$ \[ \lim_{\alpha\to\alpha_0}\kappa_l(\alpha)\zeta_\alpha(\lambda_l^h(\alpha))=2\frac{||a_{ll}\phi_l||^2}{h^2+||a_{ll}\phi_l||^4} .\]  \ethm\begin{proof}By \eqref{TD1} and \eqref{eqn-104}, note  that
\begin{equation}\label{eq14}\zeta_\alpha(\lambda)=-2\pi\xi'_\alpha(\lambda)=-2\frac{1}{\pi}\dfrac{F_1(\alpha,\lambda)\frac{\partial F_2}{\partial\lambda}(\alpha,\lambda)-F_2(\alpha,\lambda)\frac{\partial F_1}{\partial\lambda}(\alpha,\lambda)}{|F(\alpha,\lambda)|^2} .\end{equation}
Substituting $\lambda=\lambda_l^h(\alpha)$ in \eqref{eq14} and multiplying it by $\kappa_l(\alpha),$ we get\[\kappa_l(\alpha)\xi_\alpha'(\lambda_l^h(\alpha))=\frac{1}{\pi}\dfrac{\kappa_l(\alpha)F_1(\alpha,\lambda_l^h(\alpha))\frac{\partial F_2}{\partial\lambda}(\alpha,\lambda_l^h(\alpha))}{|F(\alpha,\lambda_l^h(\alpha))|^2}-\frac{1}{\pi}\frac{\kappa_l(\alpha)F_2(\alpha,\lambda_l^h(\alpha))\frac{\partial F_1}{\partial\lambda}(\alpha,\lambda_l^h(\alpha))}{|F(\alpha,\lambda_l^h(\alpha))|^2}.\]Since $F_2(\alpha,\lambda_l(\alpha))=O(|\alpha-\alpha_0|^3)$ and $\tilde{c}_{ll}(\alpha,\lambda_l(\alpha))=O(|\alpha-\alpha_0|)$ for $l=1,2$, we have  \[\lim_{\alpha\to\alpha_0}\dfrac{\frac{\partial F_2}{\partial\lambda}(\alpha,\lambda_l^h(\alpha))}{\tilde{c}_{ll}(\alpha,\lambda_l^h(\alpha))}=0.
    \]
Using the above limit together with Lemma \ref{Lem-limits} (c), (d) and $\eqref{222}$, the proof follows.  \end{proof}

\section{threshold eigenvalue case}

\begin{definition}[Model for the threshold case] Let $\alpha_0\in\RR$ be fixed. For $\alpha\in\RR$, consider the operator \begin{equation}\label{model23}
H_\alpha=H_0+\alpha V\quad\text{ on }L^2(\RR^3)
\end{equation} and $u_j,\,j=1,2$ satisfy the following conditions:\begin{enumerate}[label=(\alph*)]
    \item $u_1,u_2\in\mathcal E$ and $u_1\perp u_2$.
    \item $\delta_{jk}+\alpha_0\gamma(\eta_{jk},0)=0$.
    \item $\langle \phi_1, \phi_2 \rangle \neq 0$ or $\|\phi_1\| \neq \|\phi_2\|$ where $\phi_j=\Phi_{u_j,0}$, $j=1,2.$ 
    \item The function $F_2(\alpha,\lambda_j(\alpha))\neq 0$ for $j=1,2$ and $\alpha>\alpha_0$ in $I$.
\end{enumerate}\end{definition}
For this model, by Corollary~\ref{Cor-Modeleigenvalue}, $0$ is a threshold eigenvalue of $H_{\alpha_0}$ with multiplicity two. Furthermore, by Theorem~\ref{Threshold}, when $\alpha>\alpha_0$, this eigenvalue dissolves into the absolutely continuous spectrum, whereas for $\alpha<\alpha_0$ it splits into two simple discrete eigenvalues. 

All the results of Sections~\ref{6}--\ref{7} and Theorem \ref{31} on the behaviour of the average time delay remain valid for this model as $\alpha \to \alpha_0^{+}$, without any modification of the proofs.
\appendix
\begin{appendices}
\section{}
In this Appendix, we state the classical Morse lemma, which gives the local canonical form of a smooth function in a neighborhood of a non-degenerate critical point; see \cite{Milnor} for details.
\bthm[Morse Lemma]
Let $f:\mathbb{R}^2 \to \mathbb{R}$ be a smooth function and let $p=(x_0,y_0)\in\mathbb{R}^2$ be a non-degenerate critical point of $f$, i.e $Df(p)=0$ such that $f(p)=0$ and the Hessian $D^2f(p)$ is non-singular. Then there exist open neighborhoods $U,V \subset \mathbb{R}^2$ of $p$ and $0$, respectively and a diffeomorphism 
\[
\Phi:U \to V, \qquad \Phi(p)=0
\]
such that, in the new coordinates $(\xi,\eta)=\Phi(x,y)$, one has
\[
f\circ\Phi^{-1}(\xi,\eta)=-\xi^2 - \eta^2, \quad f\circ\Phi^{-1}=-\xi^2 + \eta^2, \quad \text{or} \quad f\circ\Phi^{-1}=\xi^2 + \eta^2
\]
according as the Hessian $D^2f(p)$ has index (number of negative eigenvalues) $2$, $1$, or $0$, respectively.
\ethm
\bcor\label{Cor-Morse} Let $f:\mathbb{R}^2 \to \mathbb{R}$ be a smooth function and let $p=(x_0,y_0)$ be a non-degenerate critical point of index $1$ with $f(p)=0$ and $\frac{\partial^2f}{\partial y^2}(p)\neq0$. 
Then there exists a neighborhood $I$ of $x_0$ and distinct smooth functions $y_1,y_2:I\to\mathbb{R}$ such that
\[
f(x,y_j(x)) = 0, \qquad j=1,2
.\]\ecor
\begin{proof}
    By the Morse lemma, $f = gh$ on $U$  where 
    \[g(x,y)=\eta(x,y)+\xi(x,y),\,h(x,y)=\eta(x,y)-\xi(x,y).\] Noting that $g(p)=h(p)=0$, we have
    \[
        0\neq\frac{\partial^2 f}{\partial y^2}(p)
        = 2\,\frac{\partial g}{\partial y}(p)\frac{\partial h}{\partial y}(p),
    \]
    which implies that 
    \[
        \frac{\partial g}{\partial y}(p)\neq0 
        \quad\text{and}\quad 
        \frac{\partial h}{\partial y}(p)\neq0.
    \]
    By the implicit function theorem, there exist a neighbourhood $I$ of $x_0$ and smooth functions $y_1,y_2:I\to\RR$ such that 
    \[
        g(x,y_1(x))=0
        \quad\text{and}\quad
        h(x,y_2(x))=0
        \quad\text{for all }x\in I.
    \]
    This completes the proof.
\end{proof} \section{}
 The following proposition is useful in deducing the spectral concentration. 
 \bpro\label{Pro-specconc}Let $\{T_n\}_{n\in\mathbb{N}}$ be a sequence of self-adjoint operators on a Hilbert
space $\mathcal{H}$ that converges to a self-adjoint operator $T$ in the
\emph{strong resolvent sense} as $n \to \infty$.  
Let $E_n$ and $E$ denote the spectral families of $T_n$ and $T$ respectively.
Let $\{\lambda_n\}_{n\in\mathbb{N}} \subset \mathbb{R}$ be a sequence such that
$\lambda_n \to \lambda $ and set
$P := E(\{\lambda\}).$
Then \[\slim_{n\to\infty}
E_n(-\infty,\lambda_n](I-P)
=
E(-\infty,\lambda).\]

\epro
\begin{proof}
     For each $n\in\mathbb{N}$, define the self-adjoint operator $T_n':=T_n-(\lambda_n-\lambda)$. If $E_n'$ denote the spectral family associated with $T_n'$, then $E_n'(-\infty,\lambda]=E_n(-\infty,\lambda_n].$ Furthermore, it follows that
$T_n' \to T$ in the strong resolvent sense.
Therefore, by Theorem 1.15 in \cite[Chapter VIII]{KatoBook}, we obtain
\[
\slim_{n\to\infty} E'_{n}(-\infty,\lambda](I-P)
=
E(-\infty,\lambda)\] and the result follows.
\end{proof}
\section{}In this appendix, we give the proofs for Lemma~\ref{5.2} and Lemma~\ref{Lem-aijrelation}. \begin{center}
\textbf{Proof of Lemma~\ref{5.2}}
\end{center}
  The proof of  $\lambda_1'(\alpha_0)\geq\frac{1}{\alpha_0^2||\phi_j||^2}$ for $j=1,2$ is trivial. To prove other bound, observe that\begin{equation}\label{iff}
\begin{split}
 \lambda_2'(\alpha_0)\leq\frac{1}{\alpha_0^2||\phi_2||^2}&\iff ||\phi_2||^2(||\phi_1||^2+||\phi_2||^2)-2(||\phi_1||^2||\phi_2||^2-|\langle\phi_1,\phi_2\rangle|^2)\leq||\phi_2||^2d\\&\iff (||\phi_2||^4-||\phi_1||^2||\phi_2||^2+2|\langle\phi_1,\phi_2\rangle|^2)^2\leq||\phi_2||^4d^2\\&\iff 4\langle\phi_1,\phi_2\rangle^4-4||\phi_1||^2||\phi_2||^2|\langle\phi_1,\phi_2\rangle|^2\leq0\\&\iff 4|\langle\phi_1,\phi_2\rangle|^2\left(|\langle\phi_1,\phi_2\rangle|^2-||\phi_1||^2||\phi_2||^2\right)\leq0.
   \end{split}\end{equation} The last inequality is true by the Cauchy-Schwarz inequality. Hence \[  \lambda_2'(\alpha_0)\leq\frac{1}{\alpha_0^2||\phi_2||^2}\leq \frac{1}{\alpha_0^2||\phi_1||^2}.\]Now note that \eqref{iff} is also true if we replace $``\leq"$ by $``="$ which implies \[ \lambda_2'(\alpha_0)=\frac{1}{\alpha_0^2||\phi_2||^2}\iff \phi_1\perp\phi_2 .\] Similarly we can prove \[ \lambda_1'(\alpha_0)=\frac{1}{\alpha_0^2||\phi_1||^2}\iff \phi_1\perp\phi_2 \] which completes the proof.
\qed \vspace{2mm}\begin{center}
\textbf{Proof of Lemma~\ref{Lem-aijrelation}}
\end{center}To prove part (a), note that the phase factor of $ a_{j1} a_{j2}$ and $(-1)^j\langle\phi_1,\phi_2\rangle$ are same. Now\begin{align*}
&| a_{j1} a_{j2}| = \frac{\alpha_0^2 |\langle \phi_1, \phi_2 \rangle| \lambda_j'(\alpha_0)}{d} \\
\iff &
\sqrt{(-1)^{j+1}(-1 + \alpha_0^2 \| \phi_2 \|^2 \lambda_j'(\alpha_0))} \,
\sqrt{(-1)^{j+1}(-1 + \alpha_0^2 \| \phi_1 \|^2 \lambda_j'(\alpha_0))}
= \alpha_0^2 | \langle \phi_1, \phi_2 \rangle | \lambda_j'(\alpha_0) \\
\iff &\quad
1 + \alpha_0^4 \| \phi_1 \|^2 \| \phi_2 \|^2 [\lambda_j'(\alpha_0)]^2
- \alpha_0^2 \lambda_j'(\alpha_0) (\| \phi_1 \|^2 + \| \phi_2 \|^2)
= \alpha_0^4 |\langle \phi_1, \phi_2 \rangle|^2 [\lambda_j'(\alpha_0)]^2.
 \end{align*}Upon rearranging the terms and using \eqref{eqn-lambdaderivative} and 
\eqref{eqn-1111}, we see that the last identity is equivalent to 
\eqref{lambdapath}, which proves (a). For $j=1,2$ we have,
\begin{align*}\frac{a_{j1}^2 + |a_{j2}|^2}{\alpha_0^2\lambda_j'(\alpha_0)} &=\frac{(-1)^{j+1}(-2+\alpha_0^2(||\phi_1||^2+||\phi_2||^2)\lambda_j'(\alpha_0))}{\alpha_0^2\lambda_j'(\alpha_0)d}
.\end{align*} Substituting the values of \(\lambda'_{j}(\alpha_{0})\) from \eqref{eqn-derivativelambda} 
into the above expression on right, we obtain that it equals \(1\).  
This proves part~(b) for the case \(j = k\). Now,
    \begin{align*}&a_{11}a_{21}+a_{12}a_{22}\\&=\frac{1}{a_{11}\overline{a_{22}}}(a_{11}^2\overline{a_{22}}a_{21}+a_{11}a_{12}|a_{22}|^2)\\&=\frac{1}{a_{11}\overline{a_{22}}d^2}\left((-1+\alpha_0^2||\phi_2||^2\lambda_1'(\alpha_0))\alpha_0^2\langle\phi_1,\phi_2\rangle\lambda_2'(\alpha_0)-\alpha_0^2 \langle \phi_1, \phi_2 \rangle \lambda_1'(\alpha_0)(1-\alpha_0^2||\phi_1||_2^2\lambda_2'(\alpha_0))\right)\\&=\frac{\alpha_0^2 \langle \phi_1, \phi_2 \rangle }{a_{11}\overline{a_{22}}d^2}\left(\alpha_0^2\lambda_1'(\alpha_0)\lambda_2'(\alpha_0)(||\phi_1||^2+||\phi_2||^2)-(\lambda_1'(\alpha_0)+\lambda_2'(\alpha_0))\right)\\&=0\end{align*} which proves part (b) for $j=2,k=1$. The proof for the case $j=1,k=2$ is similar.
By part (a) for $j=1,$ we get
   \begin{align*}
||\psi_1||^2&=a_{11}^2 \| \phi_1 \|^2 + |a_{12}|^2 \| \phi_2 \|^2 + 2\Re[a_{11}\overline{a_{12}}\langle\phi_1,\phi_2\rangle]\\
&= \frac{\| \phi_1 \|^2 \left( -1 + \alpha_0^2 \| \phi_2 \|^2 \lambda_1'(\alpha_0) \right)+\| \phi_2 \|^2 \left( -1 + \alpha_0^2 \| \phi_1 \|^2 \lambda_1'(\alpha_0) \right)- 2 \alpha_0^2| \langle \phi_1, \phi_2 \rangle|^2 \lambda_1'(\alpha_0)}{d}
 \\&=\frac{- \left( \| \phi_1 \|^2 + \| \phi_2 \|^2 \right)
+ 2 \alpha_0^2 \lambda_1'(\alpha_0) \left( \| \phi_1 \|^2 \| \phi_2 \|^2
- |\langle \phi_1, \phi_2 \rangle|^2 \right)}{d}\\&=\frac{- \left( \| \phi_1 \|^2 + \| \phi_2 \|^2 \right)+\left( \| \phi_1 \|^2 + \| \phi_2 \|^2+d \right)}{d}=1
\end{align*} 
Similarly, we get \[||\psi_2||^2=|a_{21}|^2||\phi_1||^2+|a_{22}|^2||\phi_2||^2+2 \Re[a_{21}\overline{a_{22}}\langle\phi_1,\phi_2\rangle]=1\] and
 \[\langle\psi_1,\psi_2\rangle=a_{11} a_{21}||\phi_1||^2+a_{12}\overline{a_{22}}||\phi_2||^2+ (a_{11}\overline{a_{22}}\langle\phi_1,\phi_2\rangle+a_{12} a_{21}\langle\phi_2,\phi_1\rangle)=0.\] This completes the proof of part (c).
\qed
\end{appendices}
   
 \section*{Acknowledgements}
 The authors gratefully acknowledge Professor K.B. Sinha for numerous discussions regarding this work on resonance phenomena as well as educating us on many mathematical aspects of quantum mechanics. 
 
The first author acknowledges the support received from NBHM (under DAE, Govt. of
India) Ph.D. fellowship grant 0203/7/2019/RD-II/14855.

\bibliographystyle{alpha}
 
\end{document}